\documentclass[11pt]{amsart}
\usepackage{amsbsy,amssymb,amscd,amsfonts,latexsym,amstext,delarray, amsmath,graphicx,color,caption, amsthm, enumerate, verbatim}
\usepackage{amstext,amsopn, mathtools, bbm}
\usepackage{graphicx}
\usepackage{mathtools}
\usepackage{hyperref}
\usepackage{enumitem}
\usepackage{epsfig}
\input xy

\topmargin 0in
\headheight 0in
\headsep 0in
\textheight 8.3in
\textwidth 6.3in
\oddsidemargin 0in
\evensidemargin 0in
\headheight 24pt
\headsep 0.25in
\hoffset=0.3cm

\newcommand\Z{{\mathbb Z}}
\newcommand\R{{\mathbb R}}

\newcommand\T{{\mathbb T}}

\hypersetup{
	colorlinks=true,
	linkcolor=blue,
	filecolor=blue,      
	urlcolor=blue,
	citecolor = blue
}

\begin{document}
	
	\newtheorem{example}{Example}[section]
	\newtheorem{lemma}{Lemma}[section]
	\newtheorem{thm}{Theorem}
	\newtheorem{prop}[lemma]{Proposition}
	\newtheorem{cor}{Corollary}[section]
	
	\theoremstyle{remark}
	\newtheorem{remark}{Remark}[section]

	\title[Global Semiperiodic Strichartz Estimates]{On Global-in-Time Strichartz Estimates for the Semiperiodic Schr\"{o}dinger Equation}
	\author{Alex Barron}

	\maketitle
	
	\newcommand{\Addresses}{{
		\bigskip
		\footnotesize

		\textsc{Department of Mathematics, University of Illinois Urbana-Champaign,
			Urbana, IL 61801, USA}\par\nopagebreak
		\textit{E-mail address}: \texttt{aabarron@illinois.edu}

}}

\begin{abstract} We prove global-in-time Strichartz-type estimates for the Schr\"{o}dinger equation on manifolds of the form $\R^{n}\times \T^{d}$, where $\T^{d}$ is a $d$-dimensional torus. Our results generalize and improve a global space-time estimate for the Schr\"{o}dinger equation on $\R \times \T^{2}$ due to Z. Hani and B. Pausader. As a consequence we prove global existence and scattering in $H^{\frac{1}{2}}$ for small initial data for the quintic NLS on $\R\times \T$ and the cubic NLS on $\R^{2} \times \T$. 
\end{abstract}
	
\section{Introduction}

Over the last few decades there has been a wide range of research concerning Strichartz estimates for dispersive partial differential equations on a variety of manifolds, in particular on manifolds which are in an appropriate sense ``smaller'' than $\R^n$. It is of interest to understand the effect of the underlying geometry on the evolution of solutions to dispersive equations on these manifolds, and Strichartz-type space-time estimates are often a useful tool in this direction (see for example \cite{B}, \cite{BGH}, \cite{BGT}, \cite{HP} and the references therein; there is also related work studying equations with variable coefficients, for example in \cite{GS}). A particular case of interest is the Schr\"{o}dinger equation on any manifold containing trapped geodesics, for example on a $d$-dimensional torus $\T^{d}$. The study of Strichartz estimates for this equation dates back to work of Bourgain \cite{B}, and it is only very recently that the full range of (essentially sharp) local $L_{t,x}^{p}$ estimates have been proved as a consequence of Bourgain and Demeter's $\ell^{2}$ decoupling theorem \cite{BD}. See also the work of Killip and Visan \cite{KV}, which sharpens Bourgain and Demeter's Strichartz estimate. 

In this paper we focus on the setting of product manifolds of the form $\R^{n} \times \T^{d}$, where $\T^{d}$ is a (rational or irrational) $d$-dimensional torus. There has been recent interest in the behavior of solutions to the linear and nonlinear Schr\"{o}dinger equation on these manifolds (see for example \cite{CGZ},\cite{HP},\cite{HPTV},\cite{HTT2}, \cite{IP},  \cite{TV}). In particular, one can exploit dispersive effects coming from the Euclidean component of the manifold to obtain stronger asymptotic results than in the setting of $\T^{d}$. Indeed, as a starting point one can hope to prove global-in-time Strichartz-type estimates for solutions to the linear equation on $\R^{n} \times \T^{d}$ (\cite{HP}). This contrasts the situation on $\T^{d}$, where no global $L_{t,x}^{p}$ estimates are possible. One can also hope to prove stronger results than in the more general setting of $\R^{n} \times M^{d}$, where $M^{d}$ is a $d$-dimensional Riemannian manifold, since the presence of the torus facilitates Fourier-analytic and number-theoretic methods in the vein of \cite{B}, \cite{BD}. 

The particular case of the quintic nonlinear Schr\"{o}dinger equation on $\R\times \T^{2}$ has been studied in depth by Hani and Pausader \cite{HP} (see also the other papers cited above). A starting point for their analysis is the following global space-time estimate for the linear flow: \begin{equation}\label{eq:HPstrichartz}\| e^{it \Delta_{\R \times \T^{2}}} P_{\leq N} u_0 \|_{\ell^{q}_{\gamma}L^{p}(\R \times \T^{2} \times [\gamma, \gamma +1] ) } \lesssim N^{ \frac{3}{2} - \frac{5}{p}} \|u_0\|_{L^{2}(\R \times\T^{2}) }\end{equation} whenever $$p > 4 \ \ \ \text{ and  } \ \ \frac{2}{q} + \frac{1}{p} = \frac{1}{2},$$ where $P_{\leq N}$ is a smooth Littlewood-Paley frequency cut-off to scale $N \geq 1$. Notice that the norm \eqref{eq:HPstrichartz} distinguishes between the local-in-time and global-in-time integrability of the solution. Moreover, the exponent $q$ is exactly the Strichartz-admissible time endpoint for $L_{t}^{q}L^{p}_{x}$ estimates for the Schr\"{o}dinger equation on $\R$, while the loss in $N$ is the same as on $\T^{3}$ or $\R^{3}$ (and is in fact the best one can hope for). However, from the theory on $\R^{3}$ or $\T^{3}$ we expect to be able to push the exponent $p$ down to values larger than the three-dimensional Stein-Tomas endpoint $\frac{10}{3}$ (or equal to $\frac{10}{3}$ with a possible loss of $N^{\epsilon}$).

In this paper we show that \eqref{eq:HPstrichartz} is indeed true for $p > \frac{10}{3}$, and also true for $p =\frac{10}{3}$ with an arbitrarily small loss in the power of $N$. We also extend this result to higher (and lower) dimensions and prove the scale-invariant analogue of \eqref{eq:HPstrichartz} on $\R^{n}\times \T^{d}$ for $p$ away from the Stein-Tomas endpoint $\frac{2(n+d+2)}{n+d}$. The main theorem is the following.

\begin{thm} \label{mainThm} Let $\T^{d}$ be a $d$-dimensional rational or irrational torus and $\Delta_{\R^{n} \times \T^{d}}$ the Laplacian on $\R^{n} \times \T^{d}$. Let $p^{\ast} = \frac{2(n+d+2)}{n+d}$ and fix $p > p^{\ast}$. Then if $q = q(p) :=  \frac{4p}{n(p - 2)}$ and $q > 2$, \begin{equation}\label{eq:mainThmEst}\bigg(\sum_{\gamma \in \Z} \| e^{it\Delta_{\R^{n} \times \T^{d} }} u_{0} \|^{q}_{L^{p} (\R^{n} \times \T^{d} \times [\gamma - 1, \gamma +1]) }\bigg)^{1/q}  \lesssim \|u_{0}\|_{H^{s}(\R^{n} \times \T^{d})},\end{equation} where $s = \frac{n+d}{2} - \frac{n + d + 2}{p}$. Moreover, if $p = p^{\ast}$ then the result holds with $q = q(p^{\ast})$ for any $s > 0$ (with a constant that blows up as $s \rightarrow 0$).    \end{thm}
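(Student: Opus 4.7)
By the Littlewood--Paley square function characterization of $H^{s}(\R^{n}\times\T^{d})$ together with Minkowski's inequality (valid since $p,q\geq 2$), the theorem reduces to the frequency-localized bound
\[
\Big(\sum_{\gamma\in\Z}\|e^{it\Delta}u_{0}\|_{L^{p}(\R^{n}\times\T^{d}\times[\gamma-1,\gamma+1])}^{q}\Big)^{1/q}\lesssim N^{s}\|u_{0}\|_{L^{2}}
\]
for each dyadic $N\geq 1$ and $u_{0}$ spectrally supported at scale $N$. The plan is to produce this via a $TT^{\ast}$ kernel decomposition combining a local-in-time estimate (which supplies the exponent $N^{s}$) with a global Euclidean dispersive estimate (which supplies the $\ell^{q}_{\gamma}$ summability).

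The first ingredient is the scale-invariant local Strichartz bound on any unit slab $I$,
\[
\|e^{it\Delta}P_{N}u_{0}\|_{L^{p}(\R^{n}\times\T^{d}\times I)}\lesssim N^{s}\|u_{0}\|_{L^{2}},\qquad |I|\sim 1,
\]
valid for $p>p^{\ast}$ (with an arbitrarily small $N^{\epsilon}$ loss at the endpoint). This follows from the Bourgain--Demeter $\ell^{2}$-decoupling theorem applied to the truncated paraboloid in $\R^{n+d+1}$, together with the Killip--Visan sharpening at the endpoint. It controls each term on the left individually but provides no $\gamma$-summability. The second ingredient is the Euclidean dispersion: the Schr\"odinger kernel on $\R^{n}\times\T^{d}$ factors as $K_{\R^{n}}(t,\cdot)\otimes K_{\T^{d}}(t,\cdot)$ with the sharp bound $|K_{\R^{n}}(t,x)|\lesssim|t|^{-n/2}$. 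Pairing this with a suitable bound on the frequency-$N$ truncated torus kernel $K^{N}_{\T^{d}}(t,\cdot)$ and running the Keel--Tao $TT^{\ast}$ machinery with Hardy--Littlewood--Sobolev on the time axis yields a global Strichartz estimate whose time exponent is precisely the Euclidean-admissible $q=\tfrac{4p}{n(p-2)}$ from the theorem. A Galilean / wave-packet interpretation is that distinct frequency-$\xi$ pieces of $P_{N}u_{0}$ translate through $\R^{n}$ at disjoint velocities $2\xi$ of size $\sim N$, so the corresponding tubes are spatially separated across distinct slabs $I_{\gamma}$, supplying the missing $\ell^{q}_{\gamma}$ orthogonality.

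To assemble these, I would dualize and split $\int e^{i(t-s)\Delta}F(s)\,ds$ into a near piece ($|t-s|\lesssim 1$) and a far piece ($|t-s|\gg 1$). The near piece is controlled by iterating the local Strichartz estimate on $O(1)$ adjacent slabs, where the bookkeeping reduces to a Schur-type bound using $q\geq 2$ (so that $\ell^{q'}\hookrightarrow\ell^{q}$) and gives an operator norm of order $N^{2s}$ from $\ell^{q'}L^{p'}\to\ell^{q}L^{p}$. The far piece is controlled by the Euclidean dispersive bound composed with the torus-kernel estimate, and Hardy--Littlewood--Sobolev on $\R$ converts the $|t-s|^{-n/2}$ decay into the admissible $L^{q'}_{t}\to L^{q}_{t}$ mapping.

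The principal obstacle is the intermediate range $p^{\ast}<p\leq 2+\tfrac{4}{n}$. There, the Euclidean dispersive estimate, if fed with the crude bound $\|K^{N}_{\T^{d}}(t,\cdot)\|_{L^{\infty}}\lesssim N^{d}$, loses a factor of $N^{d(1/2-1/p)}$, which is strictly larger than the target $N^{s}=N^{(n+d)/2-(n+d+2)/p}$. To close the gap I would replace the trivial $L^{\infty}$ bound on $K^{N}_{\T^{d}}(t,\cdot)$ by a finer $L^{p}$-averaged control valid for $|t|\gtrsim 1$, produced by Weyl / Gauss-sum estimates in the rational case and by the analogous analytic-number-theoretic / oscillatory-sum control in the irrational case; this torus-side dispersive input is exactly what drives the final exponent down to $N^{s}$. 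The endpoint $p=p^{\ast}$ is then handled by absorbing the $N^{\epsilon}$ loss from endpoint decoupling into the arbitrarily small $s>0$ permitted in the statement.
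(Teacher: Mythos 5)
The near-piece handling and the Littlewood--Paley reduction are sound, and your overall $TT^{\ast}$ near/far split is in the right spirit. But there is a genuine gap in the far piece, and it is precisely in the intermediate range $p^{\ast}<p<2+\tfrac{4}{n}$ that you yourself flag as the principal obstacle: the Weyl/Gauss-sum refinement you propose is quantitatively insufficient, and a different ingredient is needed to close the argument.

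Here is the quantitative problem. In the far regime $|t-s|\sim|\gamma|\gg 1$, interpolating the Euclidean dispersive estimate and Plancherel (as in the paper's Lemma~\ref{dispLemma}) gives an $L^{p'}\to L^{p}$ bound of order $(N^{d}|\gamma|^{-n/2})^{1-2/p}$. The $|\gamma|$-decay is exactly the HLS-critical power $|\gamma|^{-2/q}$, so summability in $\gamma$ is fine, but the $N$-power $N^{d(p-2)/p}$ exceeds the target $N^{2s}=N^{\,n+d-2(n+d+2)/p}$ unless $p\geq 2+\tfrac{4}{n}$. You propose to fix this by replacing $\|K^{N}_{\T^{d}}\|_{L^{\infty}}\lesssim N^{d}$ with an $L^{p}$-averaged Weyl-sum bound. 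But (i) the pointwise torus kernel genuinely attains size $\sim N^{d}$ near every integer time (so no uniform improvement exists), and (ii) running the averaged Weyl bound through Young's inequality in time (the natural way to exploit it in a $TT^\ast$ far piece) gives $\|M^{1-2/p}\|_{L^{p/2}_{t}}\lesssim N^{\,d(p-2)/(2p)}$ for $p$ near $p^{\ast}$, which only closes the argument for $p\geq \tfrac{4n+2d+8}{2n+d}$. One checks that $p^{\ast}=\tfrac{2(n+d+2)}{n+d}<\tfrac{4n+2d+8}{2n+d}$ always, so a range $p^{\ast}<p<\tfrac{4n+2d+8}{2n+d}$ is left uncovered. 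Worse, at $p=p^{\ast}$ this far-piece constant is $N^{d/(n+d+2)}$, a fixed positive power of $N$, while the theorem demands only $N^{\epsilon}$ for arbitrary $\epsilon>0$; absorbing this into "the arbitrarily small $s>0$ permitted" does not work.

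What you are missing is the role of the decoupling theorem in the \emph{global} step. The paper first proves a global $\ell^{q}_{\gamma}L^{p}$ estimate with an $N^{\epsilon}$ loss for all $p\geq p^{\ast}$ (Proposition~\ref{mainThmEps}), obtained by the $\ell^{2}$ decoupling theorem plus the Euclidean $L^{q}_{t}L^{p}_{x}$ Strichartz estimate applied slab by slab after reducing to unit Fourier cubes. This estimate already has the correct $N$-power up to $N^{\epsilon}$ and the correct global summability, but no margin to remove $\epsilon$. The far piece of the bilinear form is then controlled not by a pure kernel estimate but by a two-parameter interpolation (Lemma~\ref{lemma:interpolation1}) between (a) the dual of this decoupling-based global estimate at exponent pairs near $(p^{\ast},b)$, which supplies $N^{\alpha_{b}+\epsilon}$ with no $2^{j}$-growth, and (b) the kernel/HLS estimate at $(\infty,\infty)$, which supplies $2^{-jn/p}$ decay at the cost of $N^{d}$. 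Choosing the interpolation point near $(p,p)$ and invoking the strict inequality $c(a,b)<2\alpha_{p}$ (Lemma~\ref{lem:Npower}) produces both the right $N$-power and a geometric $2^{-\eta j}$ gain that absorbs the $N^{\epsilon}$ after a Keel--Tao atomic decomposition in $\ell^{q'}L^{p'}$. Without the decoupling-based global estimate as one endpoint of this interpolation, your far-piece estimate cannot reach the claimed $N^{2s}$ for $p$ near $p^{\ast}$. Two concrete fixes if you want to salvage the $TT^\ast$ skeleton: establish the $\epsilon$-loss global estimate first and use it as the second interpolation endpoint as above, and replace the HLS step by the Keel--Tao atomic/layer-cake decomposition so that the two-parameter family can be summed in the $\ell^{q'}L^{p'}$ framework.
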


\noindent Note that when $p = p^{\ast}$ we have $q(p) = \frac{2(n+d+2)}{n}$. Setting $d = 0$ or $n = 0$, we recover the usual $L_{t,x}^{p}$ Strichartz estimates on $\R^{n}$ and $\T^{d}$, respectively (modulo a loss of $N^{\epsilon}$ in the Euclidean case). Also note that the time exponent $q(p)$ is exactly the admissible $q$ value corresponding to the $L_{t}^{q}L_{x}^{p}$ Strichartz estimates on $\R^{n}$, which is expected since we heuristically have $n$ directions contributing to dispersion. Indeed, one cannot prove an estimate of type \eqref{eq:mainThmEst} for $q < q(p)$ (see Remark \ref{rmk:qsharp} for a proof). We remark that in the endpoint case $p = p^{\ast}$ one expects the result to be true with $s = 0$, at least when $n \geq d$. However it does not appear that the techniques present in this paper will be enough to tackle the endpoint case. We say a little more about this at the end of Section 6. 

The choice of the space-time norm in \eqref{eq:HPstrichartz} and \eqref{eq:mainThmEst} raises a few natural questions which we briefly address. The first concerns the use of local norms of the form $L^{p}(\R^n \times \T^d \times [\gamma - 1, \gamma +1 ])$. Here it is important for our proof to have time gaps which are bounded above and below (independent of the frequency $N$), though otherwise the choice of $[\gamma- 1, \gamma + 1]$ is arbitrary. This is mainly because our proof makes use of the $\ell^2$ decoupling theorem of Bourgain and Demeter which imposes these restrictions (with constants ultimately depending on the length of the time interval). At present there are no known $\ell^2$ decoupling theorems involving local-in-time mixed norms of the form $L_{t}^{r}L_{x}^{p},$ though theorems of this type would likely yield more general mixed-norm extensions of \eqref{eq:mainThmEst}. We also do not know in general if it is possible to prove a stronger global Strichartz estimate of the form $$\|e^{it \Delta_{\R^n \times \T^d }}f \|_{L_{t}^{q}L_{x,y}^{p} (\R_t \times \R^n \times \T^d)} \leq C \|f\|_{H^s (\R^n \times \T^d)}$$ with optimal scaling for $p$ near the Stein-Tomas endpoint in dimension $n+d$. Note however that it is relatively easy to prove such an estimate for large $p$. For example on $\R \times \T$ one can use Sobolev embedding and the $L_{x,t}^{6}(\R \times \R)$ Strichartz estimate to obtain $$\|e^{it \Delta_{\R \times \T }}f \|_{L_{t}^{6}L_{x,y}^{6} (\R_t \times \R \times \T)} \leq C \|f\|_{H^{\frac{1}{3}} (\R \times \T)}.$$ Here $s = \frac{1}{3}$ is optimal when $p = 6,$ but unfortunately the loss of $\frac{1}{3}$ derivatives is too large for many applications (in particular it is too large to use in the proof of the first part of Theorem \ref{nonlinearThm} in Section 5 below). There are some other interesting related questions which we summarize in Section 6.

Another natural question concerns the precise dependence of an estimate of type \eqref{eq:mainThmEst} on the derivatives in the $x$ and $y$ variables separately. We have the following corollary of the main theorem, which essentially follows from an observation of Hani and Pausader \cite{HP}. 
\begin{cor}\label{cor:mixedDeriv} Suppose $1 \leq M \leq N$ and let $P_{\leq M}^{x}$ denote an $n$-dimensional Littlewood-Paley cut-off to scale $M$ in the $x$ frequency. Then for any $p > \frac{2(n+ d+ 2)}{n+d}$ there is $\theta > 0$ such that $$ \|P_{\leq N} P_{\leq M}^{x} e^{it\Delta_{\R^{n} \times \T^{d} }} u_{0} \|_{\ell_{\gamma}^q L^{p} (\R^{n} \times \T^{d} \times [\gamma - 1, \gamma +1]) } \lesssim \bigg( \frac{M}{N} \bigg)^{\theta} N^{\frac{n+d}{2} - \frac{n + d + 2}{p}}\|u_{0}\|_{L^{2}(\R^{n} \times \T^{d})}.$$
	
\end{cor}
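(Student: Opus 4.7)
My plan is to derive the corollary by complex interpolation between Theorem \ref{mainThm} at a slightly smaller exponent and a trivial Bernstein--$L^{\infty}$ bound, exploiting the scale-invariant structure of both endpoints to collapse the $N$-exponent to $s(p) := \frac{n+d}{2} - \frac{n+d+2}{p}$. I fix an auxiliary exponent $p_{0} \in (p^{\ast}, p)$ and apply Theorem \ref{mainThm} to $P_{\leq M}^{x} u_{0}$ (whose $L^{2}$ norm is controlled by $\|u_{0}\|_{L^{2}}$) to obtain the baseline estimate
\begin{equation*}
\|P_{\leq N} P_{\leq M}^{x} e^{it\Delta} u_{0}\|_{\ell^{q(p_{0})}_{\gamma} L^{p_{0}}_{t, x, y}} \lesssim N^{s(p_{0})} \|u_{0}\|_{L^{2}},
\end{equation*}
which carries no gain in $M/N$ and will serve as one endpoint.

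For the second endpoint, I observe that $P_{\leq N} P_{\leq M}^{x} f$ has Fourier support contained in $\{|\xi| \leq M\} \times \{|k| \leq N\}$, so chaining Bernstein in the $y$ variable (at scale $N$) and in the $x$ variable (at scale $M$) gives $\|P_{\leq N} P_{\leq M}^{x} f\|_{L^{\infty}_{x,y}} \lesssim M^{n/2} N^{d/2} \|f\|_{L^{2}}$. Combined with the unitarity of $e^{it\Delta}$ on $L^{2}$, this lifts to the space-time bound
\begin{equation*}
\|P_{\leq N} P_{\leq M}^{x} e^{it\Delta} u_{0}\|_{\ell^{\infty}_{\gamma} L^{\infty}_{t, x, y}} \lesssim M^{n/2} N^{d/2} \|u_{0}\|_{L^{2}}.
\end{equation*}
The crucial feature is the identity $M^{n/2} N^{d/2} = (M/N)^{n/2} \cdot N^{(n+d)/2}$: the scale-invariant $L^{\infty}$-loss is $N^{s(\infty)} = N^{(n+d)/2}$, and Bernstein beats it by a factor of $(M/N)^{n/2}$.

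Complex interpolation at weight $\lambda \in (0, 1)$ between these two $L^{2} \to \ell^{q}_{\gamma} L^{p}_{t, x, y}$ estimates produces an intermediate bound at $1/p_{\lambda} = (1-\lambda)/p_{0}$ and $1/q_{\lambda} = (1-\lambda)/q(p_{0})$, with constant
\begin{equation*}
\bigl(N^{s(p_{0})}\bigr)^{1-\lambda} \bigl(M^{n/2} N^{d/2}\bigr)^{\lambda} = \left(\frac{M}{N}\right)^{\lambda n/2} N^{(1-\lambda)s(p_{0}) + \lambda(n+d)/2}.
\end{equation*}
Since $s$ is affine in $1/p$, the $N$-exponent collapses to $s(p_{\lambda})$; choosing $\lambda = 1 - p_{0}/p$ makes $p_{\lambda} = p$ and furnishes the corollary with $\theta := \tfrac{n}{2}(1 - p_{0}/p) > 0$, for any $p_{0} \in (p^{\ast}, p)$. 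The main delicate point is precisely this collapse of the $N$-exponent, which hinges on the scale invariance of both endpoints. The $q$-exponent produced by interpolation is $q_{\lambda} = q(p_{0})\, p/p_{0}$, which is strictly larger than the admissible $q(p) = \tfrac{4p}{n(p-2)}$ appearing in Theorem \ref{mainThm} but converges to $q(p)$ as $p_{0} \to p$, giving an estimate of the stated form for this $q_{\lambda}$.
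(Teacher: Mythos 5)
Your interpolation is carried out correctly at the level of bookkeeping (the identity $M^{n/2}N^{d/2} = (M/N)^{n/2}N^{(n+d)/2}$ and the collapse of the $N$-exponent to $s(p_\lambda)$ are both right), but the argument does not prove the corollary as stated: it proves a strictly weaker estimate with the wrong $\ell^q_\gamma$ exponent. The exponent $q$ in the corollary is the admissible $q(p) = \frac{4p}{n(p-2)}$ from Theorem \ref{mainThm}, whereas your interpolation with the point $(1/p,1/q)=(0,0)$ produces $q_\lambda = q(p_0)\,p/p_0$. Since the admissibility relation $\frac{1}{q} = \frac{n}{4} - \frac{n}{2p}$ is an affine line in $(1/p,1/q)$ that does \emph{not} pass through the origin, the segment from $(1/p_0, 1/q(p_0))$ to $(0,0)$ leaves the admissible line immediately, and one checks that $q_\lambda = q(p)$ forces $p_0 = p$, hence $\lambda = 0$ and $\theta = 0$. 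So there is an unavoidable trade-off in your scheme: for fixed $p$ you can make $q_\lambda$ close to $q(p)$ only by sending $\theta = \frac{n}{2}(1-p_0/p) \to 0$. Because $\ell^{q(p)}_\gamma \hookrightarrow \ell^{q_\lambda}_\gamma$ for $q_\lambda > q(p)$, your conclusion is genuinely weaker and cannot be upgraded by an embedding. (By Remark \ref{rmk:qsharp} the value $q(p)$ is sharp, so the distinction matters.)

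The paper avoids this by performing the Bernstein/Strichartz interpolation only on the \emph{local-in-time} estimate: interpolating \eqref{eq:rectEst} with Proposition \ref{prop:localEpsRemoval} gives the fixed-interval refinement of Lemma \ref{lem:localRefinement}, with gain $(M/N)^{\delta}$ and no $\ell^q_\gamma$ structure to disturb. This refined local bound is then fed into the diagonal ($|\gamma|\leq 10$) portion of the bilinear form \eqref{eq:bilinear}, while for the off-diagonal portion one observes that the bound already obtained in Section 4 carries a constant $N^{c(a,b)}$ with $c(a,b)$ strictly less than $2\alpha_p$, and since $M \geq 1$ the surplus decay $N^{-(2\alpha_p - c(a,b))}$ is dominated by $(M/N)^{2\alpha_p - c(a,b)}$. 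In this way the correct global $\ell^{q(p)}_\gamma L^p$ structure is inherited from the $TT^\ast$ argument rather than from interpolation of global norms. If you want to keep an interpolation-based proof you would need a second endpoint lying on the admissible line (e.g.\ an $\ell^{4/n}_\gamma L^\infty$ bound with constant $M^{n/2}N^{d/2}$), which is a nontrivial global dispersive estimate not available from Bernstein alone.
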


\noindent The corollary follows from the proof of Theorem \ref{mainThm} via an argument similar to the proof of Remark 3.5 in \cite{HP}. We discuss the details at the end of Section 4.  

\subsection{Overview of the rest of the paper} The proof of Theorem \ref{mainThm} begins in Section 3, following a brief review of some preliminary material in Section 2. Our argument combines the approach of Hani and Pausader with the decoupling method of Bourgain and Demeter (see the next section for a precise statement of their decoupling theorem). We will initially prove \eqref{eq:mainThmEst} in the case $u_0 = P_{\leq N} u_0$ with an extra loss of $N^{\epsilon}$, but we show in Section 4 that this loss can be removed away from the Stein-Tomas endpoint $p^{\ast}$. In Section 5 we study some applications of Theorem \ref{mainThm} to the nonlinear theory of Schr\"{o}dinger equations on $\R^{n} \times \T^{d}$. Our two main results in this direction are contained in the following theorem. Recall that the quintic NLS is the equation $$-i\partial_{t}u + \Delta_{\R^{n} \times \T^{d}} u = \pm |u|^{4}u, \ \ \ \ \ \ u(x,y,0) = u_0 (x,y)$$ and the cubic NLS is the equation $$-i\partial_{t}u + \Delta_{\R^{n} \times \T^{d}} u = \pm |u|^{2}u, \ \ \ \ \ \ u(x,y,0) = u_0 (x,y),$$ and that these equations are $H^{\frac{1}{2}}$ critical on $\R \times \T$ and $\R^{2} \times \T$, respectively. Below $X^{\frac{1}{2}}$ is a Banach space of functions $u: \R \rightarrow H^{\frac{1}{2}}$ defined in Section 5, with the property that $X^{\frac{1}{2}} \hookrightarrow L^{\infty}(\R , H^{\frac{1}{2}})$. 

\begin{thm} \label{nonlinearThm} The quintic NLS on $\R \times \T$ with initial data $u_0 \in H^{\frac{1}{2}}(\R \times \T)$ is locally well-posed. Moreover, there exists $\delta > 0$ such that if $\|u_0\|_{H^{\frac{1}{2}}} < \delta$ then the solution $u\in X^{\frac{1}{2}}$ is unique, exists globally in time, and scatters as $t\rightarrow \pm \infty$ in the sense that there are $v_{\pm}\in H^{\frac{1}{2}}$ such that $$\lim_{t\rightarrow \pm \infty}\|u - e^{it\Delta_{\R \times \T}}v_{\pm}\|_{H^{\frac{1}{2}}} = 0.$$ 
	
The cubic NLS on $\R^{2} \times \T$ with initial data $u_0 \in H^{\frac{1}{2}}(\R^{2} \times \T)$ is also locally well-posed. Moreover, there exists $\delta > 0$ such that if $\|u_0\|_{H^{\frac{1}{2}}} < \delta$ then the solution $u\in X^{\frac{1}{2}}$ is unique, exists globally in time, and scatters as $t\rightarrow \pm \infty$ 
\end{thm}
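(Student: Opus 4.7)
The plan is to derive Theorem \ref{nonlinearThm} from Theorem \ref{mainThm} and Corollary \ref{cor:mixedDeriv} by the standard critical-regularity contraction-mapping scheme, carried out in a Banach space $X^{1/2}$ of Koch--Tataru $U^q/V^q$-atomic type adapted to the Schr\"{o}dinger flow, together with a companion ``nonlinear'' space $N^{1/2}$ for Duhamel data; the definition of these spaces is essentially as in \cite{HP}, with the index $q$ of the atomic space chosen so that the linear bound $\|e^{it\Delta_{\R^n \times \T^d}} u_0\|_{X^{1/2}} \lesssim \|u_0\|_{H^{1/2}}$ and the dual Duhamel estimate $\|\int_0^t e^{i(t-s)\Delta_{\R^n \times \T^d}} F(s)\,ds\|_{X^{1/2}} \lesssim \|F\|_{N^{1/2}}$ hold, and so that $X^{1/2} \hookrightarrow L^{\infty}(\R;H^{1/2})$. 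The transference principle then lets one apply Theorem \ref{mainThm} to any piece $P_N u$ with $u \in X^{1/2}$ at no additional cost.

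With this framework in place, both parts of the theorem reduce to a multilinear estimate of the form
\[
\Big\| \prod_{j=1}^{k} u_j^{(\ast)} \Big\|_{N^{1/2}} \lesssim \prod_{j=1}^{k} \|u_j\|_{X^{1/2}},
\]
where $u_j^{(\ast)} \in \{u_j, \overline{u_j}\}$, $k = 5$ for the quintic on $\R \times \T$ and $k = 3$ for the cubic on $\R^2 \times \T$. The choice of $H^{1/2}$ is the unique scale at which this estimate balances by homogeneity, using as primary Strichartz exponents $p = 8$ (so $q = 16/3$) in Theorem \ref{mainThm} when $(n,d) = (1,1)$ and $p = 5$ (so $q = 10/3$) when $(n,d) = (2,1)$. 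Both lie strictly above the respective Stein--Tomas endpoints $p^{\ast} = 4$ and $p^{\ast} = 10/3$, so Theorem \ref{mainThm} supplies the required scale-invariant bound with no $N^{\epsilon}$ loss. To prove the multilinear estimate one applies a Littlewood--Paley decomposition, orders the frequencies $N_1 \geq \cdots \geq N_k$, dualizes the $N^{1/2}$-norm against a test function, and uses H\"{o}lder's inequality on the resulting $(k+1)$-linear integral, controlling each factor at its own dyadic scale via Theorem \ref{mainThm}.

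The main obstacle is the high--high interaction regime, in which the two largest frequencies $N_1 \sim N_2$ dwarf the others; here a naive dyadic summation fails to converge because too many derivatives are placed on the output. This is where Corollary \ref{cor:mixedDeriv} is indispensable: it furnishes the additional gain $(M/N)^{\theta}$ whenever the Euclidean frequency $M$ of a factor is much smaller than its full frequency $N$, and this gain is enough to make every dyadic sum absolutely convergent. The scheme follows the template of \cite{HP} for the quintic on $\R \times \T^2$, except that, because Theorem \ref{mainThm} reaches the scale-invariant exponent rather than merely a supercritical variant, the multilinear estimate now closes at $s = \tfrac{1}{2}$ (the critical regularity) rather than only at $s > \tfrac{1}{2}$.

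Finally, local well-posedness for arbitrary $H^{1/2}$ data follows from the usual frequency-truncation argument: for $T$ small enough (depending on the profile of $u_0$, not only on $\|u_0\|_{H^{1/2}}$) the auxiliary Strichartz norm from Theorem \ref{mainThm} restricted to $[0,T]$ can be made small, after which the multilinear estimate yields a contraction on a ball in $X^{1/2}([0,T])$. For data with $\|u_0\|_{H^{1/2}} < \delta$ and $\delta$ small enough to defeat the multilinear constant raised to the power $k-1$, the contraction runs on all of $\R$, producing a unique global solution $u \in X^{1/2}(\R)$. Scattering then follows at once: the Duhamel term $\int_0^t e^{-is\Delta_{\R^n \times \T^d}}(|u|^{k-1} u)(s)\,ds$ is Cauchy in $H^{1/2}$ as $t \to \pm\infty$ by the same global bound, and the states $v_{\pm} := u_0 \mp i \int_0^{\pm\infty} e^{-is\Delta_{\R^n \times \T^d}}(|u|^{k-1} u)(s)\,ds \in H^{1/2}$ realize the announced scattering asymptotics.
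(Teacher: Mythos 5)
Your overall framework (atomic $U^p/V^p$ spaces, transference of Theorem \ref{mainThm}, a multilinear estimate obtained by dualizing against a test function, Littlewood--Paley decomposition with frequency ordering, and a contraction-mapping closure on short intervals via a divisible auxiliary norm) is the same skeleton the paper uses. However, there is a genuine gap in the mechanism you invoke for the high--high interaction regime.

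You assert that Corollary \ref{cor:mixedDeriv} is indispensable: that without the $(M/N)^{\theta}$ refinement coming from comparing Euclidean and full frequency the dyadic sums over $N_0 \sim N_1$ do not close at $s = 1/2$. This is not what the paper does. The paper never applies Corollary \ref{cor:mixedDeriv} in the nonlinear analysis. In the proof of Lemma \ref{lem:nonlinear1} (and its cubic counterpart Lemma \ref{lem:nonlinearCubic}), the gain in the high--high case $N_0 \sim N_1 \geq N_2 \geq \cdots$ is obtained by partitioning the supports of $P_{N_0}$ and $P_{N_1}$ into $(n+d)$-dimensional subcubes $C_j$ of side length $\sim N_2$ and exploiting the orthogonality constraint $C_j \sim C_k$, together with Lemma \ref{lem:strichartzEmbed} applied at the subcube scale $N_2$. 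The Strichartz bound for $P_{C_j}v_{N_0}$ then produces a power of $N_2$ rather than $N_0$, which is exactly what makes the dyadic sums convergent via Cauchy--Schwarz/Schur's test. This is the Herr--Tataru--Tzvetkov mechanism \cite{HTT}, not the $\R^n$-versus-$\T^d$ frequency comparison of Corollary \ref{cor:mixedDeriv}. Your route might be salvageable (Hani--Pausader use a similar refinement in \cite{HP}), but as written the claim is a misattribution, and you have not verified that the $(M/N)^{\theta}$ gain by itself would close the sums for the exponents you propose.

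A second, smaller issue: your Strichartz exponents do not match the paper's and do not obviously close. For the quintic on $\R\times\T$ the paper uses two distinct levels, $L^{9/2}$ (four factors) and $L^{18}$ (two factors), chosen so that both the spatial H\"older exponents $4\cdot\tfrac29 + 2\cdot\tfrac{1}{18} = 1$ and the time exponents $4\cdot\tfrac{5}{36} + 2\cdot\tfrac29 = 1$ close while every $p$-value remains strictly above $p^\ast = 4$. Your proposed single exponent $p = 8$ on the quintic side leaves the dual/test factor with an $L^{p'}$ exponent below $p^\ast$, which Theorem \ref{mainThm} does not cover. Similarly the cubic case in the paper uses $\{7/2, 7/2, 7/2, 7\}$ rather than a single $p = 5$. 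You should check that your H\"older split actually has all factors above $p^\ast$ in both space and time, and that the resulting dyadic weight still sums; in the paper this requires the two-level split.

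Your treatment of the local theory and scattering (small auxiliary norm on short intervals; Duhamel integral Cauchy in $H^{1/2}$) is consistent with the paper's Section 5 and is fine.
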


Once we have proved Theorem \ref{mainThm} we can prove Theorem \ref{nonlinearThm} by using some standard machinery which we review at the beginning of Section 5. In this $H^{\frac{1}{2}}$-critical setting it is essential to have an estimate of type \eqref{eq:mainThmEst} without any extra loss of $N^{\epsilon}$. In particular the result in Proposition \ref{mainThmEps} below, which has a shorter proof than Theorem \ref{mainThm}, is not sufficient to establish Theorem \ref{nonlinearThm}. We note that the local-in-time results in Theorem \ref{nonlinearThm} do not require global Strichartz estimates; we will only use the full strength of Theorem \ref{mainThm} to establish the small-data global existence and scattering results.  

Finally, in Section 6 we collect some additional remarks and related open problems.

\subsection{Notation and Basic Assumptions} As is standard, we write $A \lesssim B$ if there is some constant $c > 0$ depending only on the dimension and various Lebesgue exponents such that $A \leq cB$. If $A\lesssim B$ and $B \lesssim A$ we will write $A \sim B$. Moreover, if $A \leq c(\alpha)B$ where the constant $c(\alpha)$ depends on some parameter $\alpha$ we will write $A \lesssim_{\alpha} B$. We will also often write $A \lesssim_{\epsilon} N^{\epsilon}B$ as short-hand for the expression `for all $\epsilon > 0$ there is $c_{\epsilon}$ such that $A \leq c_{\epsilon}N^{\epsilon}B,$' to avoid having to write expressions involving constant multiples of an arbitrarily small parameter $\epsilon > 0$. 

 Let $S$ be a rectangle and let $S^{-1}$ denote the dual rectangle centered at the origin obtained by inverting the side lengths. We let $w_{S}$ be a weight adapted to $S$ in the following sense: $w_{S}(x)$ decays rapidly for $x \notin S$, and $\widehat{w_{S}}(\xi)$ is supported in a fixed dilate of $S^{-1}$. We similarly define $w_{S}$ if $S$ is a ball, and if $\Omega = \bigcup_{S} S$ then we let $w_{\Omega} = \sum_{S}w_{S}$. Note that we can construct $w_{S}$ by taking a bump function $w$ adapted to the unit ball such that $$|w(x)| \lesssim \frac{1}{(1 + |x|)^{1000(n+d)}}$$ and then applying a suitable affine transformation. If $c > 0$ and $S$ is a ball or rectangle, we also let $cS$ denote the centered dilate by $c$. 
 
 For a dyadic integer $N \geq 1$ we let $P_{\leq N}f = f \ast \varphi_{N}$, where $\varphi_{N}$ is a smooth function such that $\widehat{\varphi_N}$ is supported in $B_{2N}(0)$. We also let $P_{N}$ denote a smooth Littlewood-Paley cut-off to the annulus $A_{N} = \{ \xi : N/2 \leq |\xi| < N\}$.  
 
 Finally, we assume that all functions are smooth and rapidly decaying. We can do so with no loss of generality as long as our estimates are independent of the smoothness and decay parameters. 

\subsection{Acknowledgments} The author thanks Benoit Pausader for suggesting the problem that led to this paper, and for several helpful discussions. The author also thanks Jill Pipher and Linhan Li for many helpful discussions, and an anonymous referee for suggestions that have improved the presentation of the paper.   

\section{Preliminaries}

We will study the linear semiperiodic Schr\"{o}dinger equation \begin{equation}\label{eq:Schrodinger}\begin{cases}
-i\partial_{t}u + \Delta_{\R^{n} \times \T^{d}} u = 0,& \\
u(x,y,0) = f(x,y)              & x\in \R^{n}, y \in \T^{d}. 
\end{cases}\end{equation}

\noindent We assume that our initial data $f$ is smooth and rapidly decaying, which we can do with no loss of generality as long as our estimates do not depend on the smoothness of $f$. For most of our arguments in the following sections we will also assume that $\T^{d}$ is the flat torus $\T^{d} \sim [0,1]^{d}$. In this case solutions to \eqref{eq:Schrodinger} can be represented by $$e^{it \Delta_{\R^{n} \times \T^{d}}} f(x,y) = \sum_{m \in \Z^{d}} \int_{\R^{n}} \widehat{f_{m}}(\xi) e^{2\pi i (x \cdot \xi + y \cdot m + t(|\xi|^{2} + |m|^{2} ) } d\xi,$$ where $$f(x,y) = \sum_{m\in \Z^{d}} f_{m}(x) e^{2\pi i y\cdot m}.$$ In the more general setting of an irrational torus $\T^{d} \sim \prod_{i=1}^{d} [0, \beta_{i}]$ we have a similar representation formula obtained by replacing $|m|^{2}$ above by $\sum_{i} \beta_{i}^{2} m_{i}^{2}$. We will explain when necessary how to adapt our arguments to the case $\T^{d} \sim \prod_{i=1}^{d} [0, \beta_{i}].$ 

Recall that solutions to \eqref{eq:Schrodinger} in the Euclidean case $d = 0$ admit the following global estimates (see for example \cite{T}).

\begin{prop}[Euclidean Strichartz estimates] Suppose $\frac{2}{q} + \frac{n}{p} = \frac{n}{2}$. Then $$\|e^{it\Delta_{\R^{n}}}f\|_{L_{t}^{q}L_{x}^{p}} \lesssim \|f\|_{L^{2}(\R^{n})}$$ whenever $(q, p, n) \neq (2, \infty, 2)$.  \end{prop}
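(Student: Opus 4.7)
The plan is to follow the classical Strichartz argument, which factors neatly into a dispersive estimate, a $TT^{\ast}$ duality reduction, and an application of Hardy--Littlewood--Sobolev (with Keel--Tao bilinear interpolation reserved for the endpoint).

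First I would establish the pointwise-in-time dispersive estimate $\|e^{it\Delta_{\R^n}}f\|_{L^\infty_x} \lesssim |t|^{-n/2}\|f\|_{L^1_x}$, which comes directly from the fact that the Schr\"{o}dinger kernel on $\R^n$ is the Gaussian $(4\pi i t)^{-n/2} e^{i|x|^2/(4t)}$, together with $\|e^{it\Delta_{\R^n}}f\|_{L^2_x} = \|f\|_{L^2_x}$ (which is immediate from Plancherel). Complex interpolation between these two bounds yields the $L^{p'}\to L^p$ decay estimate $\|e^{it\Delta_{\R^n}}f\|_{L^p_x}\lesssim |t|^{-n(\frac{1}{2}-\frac{1}{p})}\|f\|_{L^{p'}_x}$ for $2\leq p\leq\infty$.

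Next I would set $T f (t,x) = e^{it\Delta_{\R^n}}f(x)$ and run the standard $TT^{\ast}$ argument. By duality, the desired estimate $\|T f\|_{L^q_t L^p_x}\lesssim \|f\|_{L^2}$ is equivalent to the boundedness of $T T^{\ast}$ from $L^{q'}_t L^{p'}_x$ to $L^q_t L^p_x$, and a direct computation identifies $T T^{\ast}F (t,x) = \int e^{i(t-s)\Delta_{\R^n}}F(s,\cdot)(x)\,ds$. Applying the dispersive estimate inside the $s$-integral and then Minkowski gives
\begin{equation*}
\|T T^{\ast} F(t,\cdot)\|_{L^p_x} \lesssim \int_{\R} |t-s|^{-n(\frac{1}{2}-\frac{1}{p})}\|F(s,\cdot)\|_{L^{p'}_x}\,ds.
\end{equation*}
The scaling relation $\frac{2}{q}+\frac{n}{p}=\frac{n}{2}$ rewrites the exponent above as $1-(\frac{2}{q'}-\frac{2}{q})\cdot\frac{1}{2}$, i.e.\ exactly the kernel needed to apply the one-dimensional Hardy--Littlewood--Sobolev inequality from $L^{q'}(\R)$ to $L^q(\R)$, provided $q>2$. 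Chaining this with the Minkowski step closes the non-endpoint case.

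The endpoint $q=2$, which requires $\frac{n}{p}=\frac{n-2}{2}$ and hence is available only for $n\geq 3$, is the main obstacle and would need the Keel--Tao bilinear interpolation machinery: one dyadically decomposes the time variable and proves off-diagonal bilinear estimates $|\langle T^\ast T F_j, G_k\rangle|\lesssim 2^{-\beta|j-k|}$ for pairs of Lebesgue exponents slightly on either side of the endpoint, then sums via a real-interpolation/atomic decomposition argument. This is the only part of the proof that is not purely formal interpolation, and it is also what explains the exclusion $(q,p,n)\neq (2,\infty,2)$: for $n=2$ the endpoint $p=\infty$ fails by a standard logarithmic counterexample (the Knapp-type example in the frequency ball), so no argument can reach it.
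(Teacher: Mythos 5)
The paper does not prove this proposition; it records it as a standard fact and cites Tao's book \cite{T}. Your argument --- the $|t|^{-n/2}$ dispersive bound from the explicit Gaussian kernel, $L^2$ conservation from Plancherel, Riesz--Thorin to get the $L^{p'}\to L^p$ decay estimate, the $TT^{\ast}$ reduction, and one-dimensional Hardy--Littlewood--Sobolev with kernel exponent $\tfrac{2}{q}$ for the non-endpoint range $q>2$, reserving Keel--Tao bilinear real interpolation for the endpoint $q=2$, $n\geq 3$ --- is precisely the classical proof given in that reference, and all the exponent bookkeeping checks out. The only small inaccuracy is peripheral to the result: the failure at $(q,p,n)=(2,\infty,2)$ is a theorem of Montgomery--Smith and requires a randomized/Gaussian-series construction, not merely a Knapp example; but since that case is excluded from the statement anyway, this does not affect what you are proving.
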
 \noindent We will in particular exploit this estimate when $p = p^{\ast} = \frac{2(n+d+2)}{n+d}$ and $q = \frac{2(n+d+2)}{n}.$

Another important tool will be the $\ell^{2}$ decoupling theorem of Bourgain and Demeter.

\begin{thm}[\cite{BD}] \label{thm:BDparabola} Let $N \geq 1$ and suppose $f$ is a smooth function on $\R^{n+1}$ such that $\widehat{f}$ is supported in an $O(N^{-2})$ neighborhood of the truncated paraboloid $$P^{n} = \{(\xi, \tau) \in \R^{n} \times \R : \tau = |\xi|^{2}, \ |\xi| \leq 1 \}.$$ Let $\{\theta\}$ be a finitely-overlapping collection of $N^{-1}$-caps covering the support of $\widehat{f}$, let $\{\varphi_{\theta} \}$ be a partition of unity subordinate to this cover, and define $f_{\theta} = f \ast \check{\varphi_{\theta}}$. Then for any ball $B_{N^{2}}$ of radius $N^{2}$ in $\R^{n+1}$ one has $$\|f\|_{L^{p}(w_{B_{N^{2}}})} \lesssim_{\epsilon} N^{\epsilon + \alpha_p} \bigg( \sum_{\theta} \|f_{\theta}\|_{L^{p}(w_{B_{N^{2}}}) }^{2} \bigg)^{\frac{1}{2}} $$ for $p \geq 2$, where $\alpha_{p} = 0$ if $2 \leq p \leq \frac{2(n+2)}{n}$ and $\alpha_p = \frac{n}{2} - \frac{n + 2}{p}$ otherwise. 
\end{thm}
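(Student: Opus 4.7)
The statement is the $\ell^{2}$ decoupling theorem of Bourgain and Demeter for the paraboloid \cite{BD}; since the paper cites the result rather than reproving it, I will outline the general strategy I would follow to prove it myself. The central idea is an induction on the scale parameter $N$. At the base scale $N = O(1)$ the inequality is trivial because there are only $O(1)$ caps, and the inductive step takes the bound at scale $N/K$ for some auxiliary parameter $K$ and upgrades it to scale $N$, paying a constant that, when iterated over $O(\log N)$ stages, contributes only the $N^{\epsilon}$ loss.

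For the inductive step I would use a broad/narrow decomposition of the ball $B_{N^{2}}$. On the broad part, $|f(x)|$ is comparable to a multilinear expression in caps $\theta_{1},\dots,\theta_{n+1}$ whose normals on $P^{n}$ are quantitatively transverse, while on the narrow part $|f|$ is concentrated on caps lying inside a single cap of an intermediate scale $K$. The narrow part is handled by parabolic rescaling, which turns each intermediate cap into a rescaled copy of the full paraboloid at scale $N/K$, followed by an application of the inductive hypothesis. The broad part then requires a genuine $(n+1)$-linear decoupling estimate, which is the main substantive input.

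To prove the multilinear estimate, I would pass to a wave-packet decomposition at the intermediate scale $\sqrt{N}$, approximate the $L^{p}$ norm of a product of wave packets by the corresponding expression in tube characteristic functions, and invoke the multilinear Kakeya inequality of Bennett--Carbery--Tao (originally proved via heat-flow monotonicity) to bound the resulting transverse product. This produces the critical-exponent $\ell^{2} \to L^{2(n+2)/n}$ multilinear decoupling with essentially no loss. Interpolating against the trivial $L^{2}$ orthogonality bound (which gives $\alpha_{p} = 0$ for $2 \le p \le 2(n+2)/n$) and against the Bernstein-type $L^{\infty}$ estimate on each $f_{\theta}$ (which gives $\alpha_{p} = n/2 - (n+2)/p$ for $p > 2(n+2)/n$) then yields the full range of exponents in the stated form.

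\textbf{Main obstacle.} Closing the induction cleanly is the key difficulty: one must balance the narrow-part inductive constant against the broad-part multilinear constant so that the per-stage loss truly combines into $N^{\epsilon}$ rather than blowing up after $O(\log N)$ iterations, and in practice this forces a delicate choice of $K = K(\epsilon)$. A secondary technical issue is that the weight $w_{B_{N^{2}}}$ does not transform in a clean way under parabolic rescaling, so at each inductive step one must work with suitable averaged or tube-localized variants of the weight and check that the multilinear Kakeya bound transfers correctly from sharp tube indicators to the weighted wave packets arising in the decoupling context.
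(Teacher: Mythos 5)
The paper does not prove this result; it is the Bourgain--Demeter $\ell^{2}$ decoupling theorem for the paraboloid, quoted verbatim from \cite{BD} and used as a black-box input (in Lemma \ref{decLemma} and Proposition \ref{mainThmEps}). So there is no internal proof to compare your sketch against.

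That said, your outline is an accurate high-level account of the Bourgain--Demeter argument: induction on scales with a trivial base case, a Bourgain--Guth style broad/narrow decomposition, parabolic rescaling to invoke the inductive hypothesis on the narrow contribution, and multilinear Kakeya (Bennett--Carbery--Tao) to control the transverse broad contribution, followed by interpolation with $L^{2}$ orthogonality and Bernstein to cover the full range $p \geq 2$. The one place you compress substantially is the mechanism for actually closing the induction: in \cite{BD} this is not a single broad/narrow pass but a multi-scale bootstrap in which one tracks an a priori decoupling constant at a whole hierarchy of intermediate scales simultaneously, and the crucial technical ingredient is the ``ball inflation'' (or ``multilinear $\ell^{2}$ decoupling on larger balls'') lemma that lets the multilinear Kakeya input be iterated to progressively improve the exponent. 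You allude to this under ``main obstacle,'' which is the right place to flag it, but it is worth being aware that this is where the bulk of the actual proof lives rather than a final balancing act. Your remark about the weight $w_{B_{N^{2}}}$ not transforming cleanly under parabolic rescaling is also a real and standard nuisance that \cite{BD} handle by working with averaged weights.
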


Finally, the following discrete analogue of the classical Hardy-Littlewood-Sobolev inequality will also be useful. 

\begin{prop}[Discrete Hardy-Littlewood-Sobolev] \label{prop:discreteHLS} Suppose $1 < p,q < \infty$ and $0 < \mu < 1$ such that $$\frac{1}{p} + \frac{1}{q} + \mu = 2.$$ Then $$\sum_{j \neq k} \frac{a_{j}b_{k}}{|j-k|^{\mu}} \lesssim \|a\|_{\ell^{p}}\|b\|_{\ell^{q}}.$$
\end{prop}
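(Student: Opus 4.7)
The cleanest route is to transfer the discrete inequality to the continuous Hardy--Littlewood--Sobolev inequality on $\R$. My plan is as follows.

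First, I would associate to each sequence a step function: set
$$A(x) := \sum_{j \in \Z} a_j \, \chi_{[j,j+1)}(x), \qquad B(y) := \sum_{k \in \Z} b_k \, \chi_{[k,k+1)}(y),$$
so that $\|A\|_{L^p(\R)} = \|a\|_{\ell^p}$ and $\|B\|_{L^q(\R)} = \|b\|_{\ell^q}$. Next, split the sum into two pieces according to whether $|j-k|\geq 2$ or $|j-k|=1$.

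For the main piece where $|j-k|\geq 2$, note that for $x\in [j,j+1)$ and $y\in [k,k+1)$ with $|j-k|\geq 2$ one has $|x-y| \leq |j-k|+1 \leq 2|j-k|$ and also $|x-y|\geq |j-k|-1 \geq 1$, so $|j-k|^{-\mu} \lesssim |x-y|^{-\mu}$. Integrating the latter bound over the unit squares $[j,j+1)\times[k,k+1)$ yields
$$\sum_{|j-k|\geq 2} \frac{a_j b_k}{|j-k|^{\mu}} \lesssim \int_{\R}\int_{\R} \frac{A(x)\, B(y)}{|x-y|^{\mu}}\, dx\, dy,$$
at which point the classical (continuous) one-dimensional Hardy--Littlewood--Sobolev inequality---valid precisely under the hypotheses $1<p,q<\infty$, $0<\mu<1$, $1/p+1/q+\mu=2$---bounds the right-hand side by $\|A\|_{L^p}\|B\|_{L^q} = \|a\|_{\ell^p}\|b\|_{\ell^q}$.

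For the nearest-neighbor piece $|j-k|=1$, the kernel is just $1$, so the contribution reduces to the two shifts $\sum_j a_j b_{j+1}$ and $\sum_j a_j b_{j-1}$. By H\"older these are bounded by $\|a\|_{\ell^p}\|b\|_{\ell^{p'}}$. The condition $1/p+1/q = 2-\mu > 1$ forces $q < p'$, which gives the embedding $\ell^q \hookrightarrow \ell^{p'}$; hence $\|b\|_{\ell^{p'}} \lesssim \|b\|_{\ell^q}$ and we absorb this piece into the same bound.

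The only place that requires genuine input is the continuous HLS inequality invoked in the main piece; the remaining steps are routine comparisons and an $\ell^p$-embedding. The easy-to-overlook point is verifying that the strict inequality $\mu<1$ in the hypothesis is exactly what guarantees $q<p'$ and thus makes the nearest-neighbor terms admissible; without that the endpoint would require weak-type arguments, but they are not needed here.
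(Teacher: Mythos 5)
The paper states Proposition \ref{prop:discreteHLS} as a standard preliminary fact and gives no proof of its own, so there is nothing to compare against; the relevant question is simply whether your argument is correct, and it is. Transferring to the continuous Hardy--Littlewood--Sobolev inequality via piecewise-constant extensions $A,B$ is a standard and clean device: the pointwise comparison $|j-k|^{-\mu}\lesssim|x-y|^{-\mu}$ on the unit square $[j,j+1)\times[k,k+1)$ is valid exactly when $|j-k|\geq 2$, which forces the separate treatment of the $|j-k|=1$ terms, and your observation that the hypothesis $\mu<1$ together with $1/p+1/q+\mu=2$ gives $1/q-1/p'=1-\mu>0$, hence $q<p'$ and $\ell^q\hookrightarrow\ell^{p'}$, is precisely what makes H\"older close that case. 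The only cosmetic omission is the (standard) reduction to $a_j,b_k\geq 0$, which one should state at the outset so that the comparison of sums with integrals is an honest inequality rather than a statement about absolute values.
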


\section{The Strichartz Estimate}

We begin by proving Theorem \ref{mainThm} with a loss of $N^{\epsilon}$. 

\begin{prop}\label{mainThmEps} For all $p \geq p^{\ast}$ and $q \geq  \frac{4p}{n(p - 2)}$ with $q > 2$ we have $$\bigg(\sum_{\gamma \in \Z} \| e^{it\Delta_{\R^{n} \times \T^{d}} }P_{\leq N}f \|^{q}_{L^{p} (\R^{n} \times \T^{d} \times [\gamma -1, \gamma +1]) }\bigg)^{1/q}  \lesssim_{\epsilon} N^{\epsilon + \frac{n+d}{2} - \frac{n + d + 2}{p}}\|f\|_{L^{2}(\R^{n} \times \T^{d})}.$$ \end{prop}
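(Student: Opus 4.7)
The plan is to combine the Bourgain--Demeter $\ell^{2}$ decoupling theorem with the Euclidean Strichartz estimates, splitting according to whether $q\ge p$ or $q<p$. I would expand $F(x,y,t):=e^{it\Delta_{\R^n\times\T^d}}P_{\leq N}f(x,y)=\sum_{|m|\le 2N}g_m(x,t)\,e^{2\pi i(y\cdot m+t|m|^2)}$ with $g_m(x,t)=e^{it\Delta_{\R^n}}P_{\leq N}^x f_m(x)$, so the space-time Fourier support of $F$ lies on the truncated paraboloid $\tau=|\xi|^2+|m|^2$ with $|\xi|,|m|\le 2N$ and $m\in\Z^d$; a parabolic rescaling sends this to the unit paraboloid $P^{n+d}\subset\R^{n+d+1}$. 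In the range $p^\ast\le p\le \tfrac{2(n+2)}{n}$, equivalently $q\ge p$, I would apply Theorem \ref{thm:BDparabola} in dimension $n+d$ on ellipsoidal ``decoupling balls'' $B$ of $(x,y)$-extent $N$ and $t$-extent $1$. Because a $1$-cap in the integer-lattice direction contains at most one point, the decoupling pieces are indexed by pairs $(m,R)$ with $m\in\Z^d$ and $R\subset\R^n$ a unit $\xi$-cube, giving
\[
\|F\|_{L^p(B)}\lesssim_\epsilon N^{\epsilon+\alpha_p}\Bigl(\sum_{m,R}\|F_{m,R}\|^2_{L^p(w_B)}\Bigr)^{1/2},\qquad \alpha_p=\tfrac{n+d}{2}-\tfrac{n+d+2}{p}.
\]

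Next I would cover $\R^n\times\T^d\times[\gamma-1,\gamma+1]$ by such balls $B_{Q,\gamma}$ indexed by $N$-cubes $Q$ tiling $\R^n$, raise to $L^p$, sum over $Q$, and apply Minkowski in $\ell^{p/2}_Q$ and $\ell^{q/2}_\gamma$ (both valid since $p,q\ge 2$) to move $\ell^2_{(m,R)}$ to the outside. The rapid decay of the weights telescopes $\sum_Q w_{B_{Q,\gamma}}$ into a weight concentrated on $I_\gamma$ at time scale $1$, and the bounded overlap of these yields $\sum_\gamma w_\gamma=O(1)$; using $|F_{m,R}|=|g_{m,R}|$ (independent of $y$), this reduces matters to bounding $\bigl(\sum_{m,R}\|g_{m,R}\|^2_{L^q_tL^p_x(\R\times\R^n)}\bigr)^{1/2}$. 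The transition from the local $L^p_t$ norm on $I_\gamma^\ast$ to the global $L^q_t$ norm is handled by H\"older, which works precisely because $q\ge p$ on bounded intervals. Euclidean Strichartz bounds $\|g_{m,R}\|_{L^q_tL^p_x}\lesssim \|P_R f_m\|_{L^2}$, and Plancherel in both $R$ and $m$ collapses $\sum_{m,R}\|P_R f_m\|^2_{L^2}$ to $\|f\|^2_{L^2(\R^n\times\T^d)}$, completing the proof in this range.

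The main obstacle is the complementary range $p>\tfrac{2(n+2)}{n}$, i.e.\ $q<p$, where the H\"older step above reverses direction and decoupling alone falls short by a factor of $N^{1/q-1/p}$. Here I would bypass decoupling altogether. Bernstein in $t$, valid because $F$ is time-frequency localized to $|\tau|\lesssim N^2$, converts $\ell^q_\gamma L^p_{t,x,y}(I_\gamma)$ into $L^q_tL^p_{x,y}(\R)$ at cost $N^{2/q-2/p}$; the Sobolev embedding $H^{d(1/2-1/p)}(\T^d)\hookrightarrow L^p(\T^d)$ combined with the square-function inequality and Minkowski yields $\|F\|_{L^q_tL^p_{x,y}}\lesssim N^{d/2-d/p}\bigl(\sum_m\|g_m\|^2_{L^q_tL^p_x}\bigr)^{1/2}$, and Euclidean Strichartz on each $g_m$ closes the bound. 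The arithmetic $\tfrac{2}{q}-\tfrac{2}{p}+\tfrac{d}{2}-\tfrac{d}{p}=\tfrac{n+d}{2}-\tfrac{n+d+2}{p}$ (using admissibility $\tfrac{2}{q}+\tfrac{n}{p}=\tfrac{n}{2}$) matches the required exponent exactly, with no $N^\epsilon$ loss in this range.
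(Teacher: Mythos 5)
Your Case 1 argument ($p^\ast \le p \le \tfrac{2(n+2)}{n}$, i.e.\ $q(p)\ge p$) is essentially the paper's proof unwound: the paper first isolates the decoupling step as Lemma~3.2, sums over $N$-balls there, and then in Section~3.2 performs the H\"older-in-time plus Euclidean Strichartz reduction on each cap; you carry out the same ball summation, Minkowski interchange, H\"older, and Strichartz in a single pass. Since at the endpoint $p=p^\ast$ one always has $q(p^\ast)=\tfrac{2(n+d+2)}{n}>p^\ast$, this is precisely the case the paper treats in detail.

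For $p>\tfrac{2(n+2)}{n}$ (so $q(p)<p$), you and the paper part ways. The paper dismisses this range with ``essentially the same or by interpolation,'' but an interpolation between the $p^\ast$ estimate and the $L^\infty$ Bernstein bound produces pairs with $q$ proportional to $p$, which lies strictly above the sharp $q(p)$, so it does not recover the proposition at the sharp exponent. Your Case 2 therefore supplies a genuine argument where the paper is vague. The decisive observation is that the Bernstein-in-time step must be applied to the $L^p_{x,y}(\R^n\times\T^d)$-valued function $t\mapsto F(\cdot,\cdot,t)$; since its time-Fourier support is contained in $|\tau|\lesssim N^2$ even after multiplying by the cutoff $w_\gamma$, the Banach-space-valued Bernstein inequality gives $\|Fw_\gamma\|_{L^p_tL^p_{x,y}}\lesssim N^{2(1/q-1/p)}\|Fw_\gamma\|_{L^q_tL^p_{x,y}}$, and then bounded overlap of the $w_\gamma$ collapses the $\ell^q_\gamma$ sum to $L^q_t(\R)$. (The scalar version of Bernstein applied pointwise in $(x,y)$ followed by Minkowski would fail here, since the interchange of $L^p_{x,y}$ and $L^q_t$ goes the wrong way when $p>q$; it is worth stating explicitly that you are using the vector-valued form.) After that, Sobolev in $y$, the square function, Minkowski (both requiring $p,q\ge 2$, which holds in your range), and Euclidean $L^q_tL^p_x$ Strichartz close the estimate, and your exponent arithmetic checks out. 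In short: Case 1 reproduces the paper, Case 2 is a different and more explicit route that actually proves the proposition for all $p\ge p^\ast$ rather than relying on an unspecified interpolation.
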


\noindent  We show in Section 4 that this loss of $N^{\epsilon}$ can be removed for $p$ away from the endpoint. 

Since our arguments use the $\ell^{2}$ decoupling theorem to handle the torus component of the manifold, the proposition is true regardless of whether $\T^{d}$ is the standard torus or the irrational torus. We include some more details in comments below.

\subsection{Main Lemmas} In this subsection we collect some preliminary results. For the rest of the section we will write $\Delta = \Delta_{\R^{n} \times \T^{d}}$.

\begin{lemma} \label{decLemma} Suppose $g, g_{l}$ are Schwartz functions on $\R^{n} \times \T^{d}$ with $g = P_{\leq N}g$, such that $$g(x,y) = \sum_{\substack{ l \in \Z^{d} \\ |l|\leq N}} \int_{[-N,N]^{n}} \widehat{g}_{l}(\xi)e^{2\pi i (x\cdot \xi + y\cdot l)} d\xi.$$ Cover $[-N,N]^{n}$ by finitely-overlapping cubes $Q_{k}$ of side-length $\sim 1$, let $\{\phi_k\}$ be a partition of unity adapted to the $\{Q_k \}$, and define $g_{\theta_{m,k}} = e^{2\pi i y\cdot m}\mathcal{F}_{x}^{-1}(\widehat{g}_{m} \phi_{k}).$ Also let $p^{\ast} =  \frac{2(n+d+2)}{n+d}$ and $p \geq p^{\ast}$. Then for any time interval $I$ of length $\sim 1$ we have  $$\|e^{it\Delta}g\|_{L^{p}(\R^{n} \times \T^{d} \times I) } \lesssim_{\epsilon} N^{\epsilon  + \frac{n+d}{2} - \frac{n + d + 2}{p}} \bigg( \sum_{m,k} \|e^{it\Delta} g_{\theta_{m,k}}w_{I}\|^{2}_{L^{p}(\R^{n} \times \T^{d} \times \R) } \bigg)^{1/2},$$ where $w_{I}$ is a bump function adapted to $I$.  \end{lemma}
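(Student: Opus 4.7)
The plan is to apply the Bourgain--Demeter $\ell^2$ decoupling theorem (Theorem~\ref{thm:BDparabola}) to the space-time function $F(x,y,t) = e^{it\Delta}g(x,y,t)$. The representation formula from Section~2 shows that the space-time Fourier transform of $F$ is supported on the set $\{(\xi, l, |\xi|^2+|l|^2) : \xi \in [-N,N]^n,\ l \in \Z^d,\ |l|\le N\}$, which sits on the paraboloid in $\R^{n+d+1}$ at frequencies of size $\lesssim N$. To invoke Theorem~\ref{thm:BDparabola}, I would view $F$ as a function on $\R^n \times \R^d \times \R$ (periodic in $y$) and perform the parabolic rescaling $(x,y,t) \mapsto (x/N, y/N, t/N^2)$; this sends the frequency support to the unit truncated paraboloid in $\R^{n+d+1}$ and changes $L^p$ norms by a factor of $N^{(n+d+2)/p}$ that will cancel between the two sides of the BD inequality.

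Under this rescaling, the $N^{-1}$-caps of Theorem~\ref{thm:BDparabola} pull back to unit cubes in the original frequency space $\R^n \times \R^d$. Because the $y$-frequencies are integers, each such unit cube contains essentially one lattice point $m \in \Z^d$; combined with the partition of $[-N,N]^n$ by the cubes $Q_k$, this identifies the decoupling pieces with precisely the $g_{\theta_{m,k}}$ in the lemma. Applying Theorem~\ref{thm:BDparabola} on a ball $B_{N^2}$ of radius $N^2$ and undoing the rescaling yields, for a parabolic ball $B$ of spatial diameter $N$ and time length $\sim 1$,
\begin{equation*}
\|F\|_{L^p(w_B)} \lesssim_\epsilon N^{\epsilon + \alpha_p} \Bigl(\sum_{m,k} \|F_{\theta_{m,k}}\|_{L^p(w_B)}^2\Bigr)^{1/2},
\end{equation*}
with $\alpha_p = \tfrac{n+d}{2} - \tfrac{n+d+2}{p}$ for $p \geq p^\ast$. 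A direct computation shows that replacing the integral of the $y$-periodic $F$ over a size-$N$ ball in $y$ by the integral over a single period of $\T^d$ introduces a factor $N^{d/p}$ on both sides of this inequality which then cancels, so $B$ can equivalently be taken to be $B_N^x \times \T^d \times I$.

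To pass from this localized estimate to the full norm on $\R^n \times \T^d \times I$, I would center $B$ in time on $I$ and tile $\R^n$ by spatial balls $\{B_N^{x,j}\}_{j \in \Z^n}$ of radius $N$ with bounded overlap, so that $\sum_j w_{B_N^{x,j}} \sim 1$. Summing $p$-th powers of the tile estimates gives
\begin{equation*}
\|F\|_{L^p(\R^n \times \T^d \times I)}^p \lesssim N^{p(\epsilon+\alpha_p)} \sum_j \Bigl(\sum_{m,k} \|F_{\theta_{m,k}}\|_{L^p(w_{B_j})}^2\Bigr)^{p/2},
\end{equation*}
and Minkowski's inequality, applicable since $p/2 \geq 1$ throughout the Stein--Tomas range, swaps the sums in $j$ and in $(m,k)$ to yield
\begin{equation*}
\lesssim N^{p(\epsilon+\alpha_p)} \Bigl(\sum_{m,k} \|F_{\theta_{m,k}} w_I\|_{L^p(\R^n \times \T^d \times \R)}^2\Bigr)^{p/2},
\end{equation*}
after summing the spatial weights back to $\sim 1$ on $\R^n$ and absorbing the leftover pointwise power of the time weight $w_I$ into its rapid decay (redefining $w_I$ if necessary). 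Taking $p$-th roots gives the lemma.

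The main obstacle is the careful bookkeeping: matching the BD caps with the $g_{\theta_{m,k}}$ through the parabolic rescaling, and checking that the extra factors of $N^{d/p}$ arising from integrating the $y$-periodic $F$ over $N^d$ periods of $\T^d$ cancel on both sides. The tiling in $\R^n$ together with Minkowski is routine, but it is precisely what enforces the hypothesis $p/2 \geq 1$ needed to reach down to the endpoint $p = p^\ast$.
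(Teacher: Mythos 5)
Your high-level route — parabolic rescaling, applying Bourgain--Demeter on a ball of radius $N^2$, undoing the scaling, tiling $\R^n$ by $N$-balls and summing with Minkowski (valid since $p \geq p^\ast > 2$) — is the same one the paper takes, and the bookkeeping of the $N^{(n+d+2)/p}$ Jacobian factor and the identification of the BD caps with the $g_{\theta_{m,k}}$ are both correct in spirit. The spatial tiling and Minkowski step at the end is also essentially identical to the paper's.

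There is, however, a genuine gap in the central step. You propose to apply Theorem \ref{thm:BDparabola} directly to the rescaled $F$, ``viewed as a function on $\R^n \times \R^d \times \R$ periodic in $y$.'' But Theorem \ref{thm:BDparabola} is stated for a Schwartz function on $\R^{n+d+1}$ whose Fourier transform is a function supported in an $O(N^{-2})$-neighborhood of the paraboloid. The $y$-periodic extension of $e^{it\Delta}g$ has space-time Fourier transform given by a sum of Dirac masses on the discrete set $\{(\xi, l, |\xi|^2 + |l|^2): \xi \in [-N,N]^n,\ l \in \Z^d\}$, i.e.\ a singular measure — it is not in the class of functions to which the theorem applies. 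The paper handles this by first passing to the continuous extension operator $\widetilde{E}$ and a mollified input $f^\delta$ whose Fourier support is a genuine $O(\delta)$-thickening of the lattice points, applying BD to $\widetilde{E}f^\delta$, and only then recovering the discrete/periodic statement via Lebesgue differentiation and a Fatou limit as $\delta \to 0$. Without this approximation-and-limit step (or an equivalent one), the invocation of BD is not justified. Your remark about the $N^{d/p}$ factor cancelling between one period of $\T^d$ and a size-$N$ ball in $y$ is plausible but is a secondary bookkeeping point; the approximation argument is the step you are missing, and it is precisely what the paper flags as the content of the proof when it says one ``approximates functions on $\R^n \times \T^d$ by functions on $\R^{n+d}$, applies the $\ell^2$ decoupling theorem, and then takes limits.''
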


\noindent The proof of Lemma \ref{decLemma} is similar to the proof of the discrete restriction theorem in \cite{BD}. One approximates functions on $\R^{n}\times \T^{d}$ by functions on $\R^{n+d}$, applies the $\ell^{2}$ decoupling theorem, and then takes limits. The details are in Section 3.3 below. 

\begin{lemma} \label{dispLemma} Let $h(x,y,t) = \sum_{l}h_{l}(x,t)e^{2\pi i l y}$ be a Schwartz function on $\R^{n} \times \T^{d} \times [-1,1]$ such that $h_{l}(x,t) \neq 0$ for at most $R$ values of $l$. Let $\psi$ be a bump function on $\R^{n}$ supported in $B_{1}$ and define $$K_{\gamma}(x,y,t) = \sum_{\substack{l \in \Z^{d} \\ |l| \leq cN }} \int_{\R^{n}} \psi (N^{-1}\xi) e^{2\pi i (x\cdot \xi + y\cdot l + (t+\gamma)(|\xi|^{2} + |l|^{2}) )} d\xi$$ for $\gamma \in \Z$ with $|\gamma| \geq 1$. Then for all $p > 2$ \begin{equation} \label{localDisp} \|K_{\gamma} \ast h \|_{L^{p}(\R^{n} \times \T^{d} \times [-1,1])} \lesssim R^{\frac{p-2}{p}}|\gamma|^{-\mu}\|h\|_{L^{p'}(\R^{n} \times \T^{d} \times [-1,1])}, \end{equation} where $\mu = \frac{n(p - 2)}{ 2p}.$ In particular, if $p = p^{\ast}$ we have $\mu = \frac{n}{n+d+2}$. \end{lemma}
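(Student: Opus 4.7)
The plan is to exploit the tensor-product structure of $K_\gamma$ and combine a Euclidean Schr\"odinger dispersive bound in $(x,t)$ with an $R$-sparse trigonometric polynomial estimate on $\T^d$. The kernel factors as $K_\gamma(x,y,t) = K^{\R}_\gamma(x,t)\,K^{\T}_\gamma(y,t)$, where
\begin{equation*}
K^{\R}_\gamma(x,t) = \int \psi(N^{-1}\xi) e^{2\pi i(x\cdot\xi + (t+\gamma)|\xi|^2)}\, d\xi, \qquad K^{\T}_\gamma(y,t) = \sum_{|l|\leq cN} e^{2\pi i(y\cdot l + (t+\gamma)|l|^2)}.
\end{equation*}
Expanding $h$ in its Fourier series on $\T^d$ and exploiting the orthogonality of the toroidal exponentials, a direct calculation gives
\begin{equation*}
(K_\gamma * h)(x,y,t) = \sum_{|l|\leq cN} e^{2\pi i(y\cdot l + (t+\gamma)|l|^2)}\,(K^{\R}_\gamma *_{(x,t)} \tilde h_l)(x,t),
\end{equation*}
where $\tilde h_l(x',t') = h_l(x',t')\, e^{-2\pi i t'|l|^2}$ has the same pointwise modulus as $h_l$. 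By hypothesis at most $R$ terms contribute.

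For the Euclidean piece I will use the standard dispersive bound. Stationary phase gives $\|K^{\R}_\gamma(\cdot,s)\|_{L^\infty_x} \lesssim |s+\gamma|^{-n/2}$, and interpolating with the trivial Plancherel bound yields the fixed-time estimate $\|K^{\R}_\gamma(\cdot,s) *_x f\|_{L^p_x} \lesssim |s+\gamma|^{-n(1/2 - 1/p)} \|f\|_{L^{p'}_x}$. For $s = t - t' \in [-2,2]$ and $|\gamma|$ sufficiently large one has $|s+\gamma| \sim |\gamma|$ uniformly, so Minkowski in $t'$ followed by H\"older on the bounded time interval gives
\begin{equation*}
\|K^{\R}_\gamma *_{(x,t)} \tilde h_l\|_{L^p_{x,t}([-1,1])} \lesssim |\gamma|^{-\mu}\, \|h_l\|_{L^{p'}_{x,t}([-1,1])}.
\end{equation*}
The remaining case $|\gamma| \in \{1,2\}$, where $|s + \gamma|$ may vanish, is handled separately by absorbing the $|\gamma|^{-\mu} = O(1)$ factor and using the $TT^*$ form of the Euclidean spacetime Strichartz estimate at the endpoint $p_0 = 2(n+2)/n$ combined with H\"older on the finite time interval. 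On the toroidal side, the $R$-sparsity yields two companion inequalities for a trigonometric polynomial $f = \sum_l c_l e^{2\pi i l y}$ with $\leq R$ nonzero modes:
\begin{equation*}
\|f\|_{L^p(\T^d)} \leq R^{1/2 - 1/p} \|c\|_{\ell^2_l} \quad (p \geq 2), \qquad \|f\|_{L^2(\T^d)} \leq R^{1/p' - 1/2} \|f\|_{L^{p'}(\T^d)} \quad (p' \leq 2),
\end{equation*}
both of which follow from Cauchy-Schwarz (giving $\|f\|_{L^\infty} \leq R^{1/2}\|f\|_{L^2}$) together with H\"older interpolation between $L^2$ and $L^\infty$.

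To assemble the estimate, the plan is: apply the first sparsity bound in the $y$-variable pointwise in $(x,t)$; pull the $\ell^2_l$ norm outside $L^p_{x,t}$ via Minkowski (valid since $p \geq 2$); apply the Euclidean dispersive bound term by term, picking up the factor $|\gamma|^{-\mu}$; pull the $\ell^2_l$ norm \emph{inside} $L^{p'}_{x,t}$ via Minkowski in the reverse direction (valid since $p' \leq 2$); and finally apply the second sparsity bound pointwise to recover $\|h\|_{L^{p'}_{x,y,t}}$. The accumulated $R$-power is $1/2 - 1/p + 1/p' - 1/2 = (p-2)/p$, matching the target. The main obstacle I anticipate is the bookkeeping: the two Minkowski steps must go in opposite directions and must be paired with the two opposing sparsity bounds, since invoking only one of the two sparsity bounds would inflate the $R$-exponent to $(p-1)/p$. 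A secondary obstacle is the small-$|\gamma|$ case, which needs a separate argument using Euclidean Strichartz to avoid the singularity of $|s+\gamma|^{-n/2}$ when $s + \gamma$ approaches $0$.
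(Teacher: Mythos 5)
Your proposal is correct in its main estimate, and it takes a genuinely different route from the paper. The paper's proof is a short Riesz--Thorin argument at the level of the full convolution operator: it reduces to $K_{\gamma,S}$ (the kernel restricted to the $\le R$ active torus frequencies), gets $L^{1}\to L^{\infty}$ with constant $R|\gamma|^{-n/2}$ from stationary phase and the triangle inequality, gets $L^{2}\to L^{2}$ with constant $O(1)$ from Plancherel (using that $t$ lies in a bounded interval), and interpolates. Your proof instead factors $K_{\gamma}=K^{\R}_{\gamma}K^{\T}_{\gamma}$, estimates the Euclidean component directly at $L^{p'}\to L^{p}$ via the fixed-time dispersive bound, and handles the torus component with the two opposing $R$-sparse trigonometric-polynomial bounds joined by Minkowski. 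Both reach the same constant $R^{(p-2)/p}|\gamma|^{-\mu}$, and your verification that the exponents $(\tfrac12-\tfrac1p)+(\tfrac1{p'}-\tfrac12)=\tfrac{p-2}{p}$ add up is correct; the Minkowski-direction bookkeeping ($\ell^{2}$ pulled outward against $L^{p}$ and inward against $L^{p'}$) also checks out since $p\ge 2\ge p'$. Your route is longer but more transparent about where the two factors $R^{(p-2)/p}$ and $|\gamma|^{-\mu}$ originate, which is instructive. One remark on the small-$|\gamma|$ caveat you raise: the paper's proof implicitly has the same issue (the stationary-phase bound is only $|t-t'+\gamma|^{-n/2}$, which is not $\lesssim |\gamma|^{-n/2}$ uniformly for $|t-t'|\le 2$ when $|\gamma|\in\{1,2\}$), but in every application of the lemma (Section 4) only $|\gamma|\ge 10$ is used, so the difficulty never arises. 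Your suggested fix via the Euclidean $TT^{*}$ Strichartz estimate does work for $p$ up to $\tfrac{2(n+2)}{n}$ after applying H\"older in time (since $q(p)\ge p$ in this range), which covers $p^{\ast}=\tfrac{2(n+d+2)}{n+d}<\tfrac{2(n+2)}{n}$ for $d\ge 1$; for $p$ above $\tfrac{2(n+2)}{n}$ the H\"older step reverses direction and your sketch would need to be replaced by a different argument, so strictly speaking the fix as described does not give the full $p>2$ range. Since the lemma is only invoked with $|\gamma|$ large, this is a non-issue in practice, but be aware that the small-$|\gamma|$ endpoint is not entirely resolved by your sketch for all claimed $p$.
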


\begin{proof} Let $S\subset \Z^{d}$ be the set of $l$ such that $h_{l} \neq 0$. By examining the Fourier coefficients we see that $K_{\gamma} \ast h = K_{\gamma,S}\ast h,$ where $$K_{\gamma,S} =  \sum_{l \in S} \int_{\R^{n}} \psi (N^{-1}\xi) e^{2\pi i (x\cdot\xi + y\cdot l + (t+\gamma)(|\xi|^{2} + |l|^{2}) )} d\xi.$$ By a stationary phase argument and the triangle inequality we have $\|K_{\gamma,S}\|_{L^{\infty}} \lesssim R|\gamma|^{-n/2},$ and therefore $\|K_{\gamma}\ast h\|_{L^{\infty}} \lesssim R|\gamma|^{-n/2}\|h\|_{L^{1}}.$ On the other hand, since $t$ is in a bounded interval we can use Plancharel's theorem to get $\|K_{\gamma,S}\ast h\|_{L^{2}} \lesssim \|h\|_{L^{2}}.$ Interpolating these estimates yields $\|K_{\gamma,S}\ast h\|_{L^{p}} \lesssim R^{\frac{(p-2)}{p}}|\gamma|^{-\mu}\|h\|_{L^{p'}},$ and this completes the proof since $K_{\gamma} \ast h = K_{\gamma,S}\ast h.$ 
	
\end{proof}

\noindent Finally, we record the following local Strichartz estimate. 

\begin{prop}\label{localStrichartz} For any bounded time interval $I$ and $p \geq p^{\ast}$ one has $$\| e^{it\Delta_{\R^{n} \times \T^{d}} }P_{\leq N}f \|_{L^{p} (\R^{n} \times \T^{d} \times I) } \lesssim_{I, \epsilon} N^{\epsilon + \frac{n+d}{2} - \frac{n + d + 2}{p}}\|f\|_{L^{2}(\R^{n} \times \T^{d})}$$ and $$\big\| \int_{I} e^{it\Delta} P_{\leq N}h(x,y,t) dt \big\|_{L^{2}_{x,y}(\R^{n} \times \T^{d})} \lesssim_{I,\epsilon} N^{\epsilon + \frac{n+d}{2} - \frac{n + d + 2}{p}} \|h\|_{L_{x,y,t}^{p'} (\R^{n} \times \T^{d} \times I ) }.$$  \end{prop}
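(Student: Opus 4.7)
The plan is to apply Lemma \ref{decLemma} at scale $N$ to decouple $e^{it\Delta}P_{\leq N}f$ into pieces whose Fourier support lies in a single unit cube of $\R^n$ paired with a single integer point of $\Z^d$, and then exploit the product structure of each piece to reduce to a unit-scale Euclidean problem that Bernstein handles immediately.

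More precisely, setting $g = P_{\leq N}f$ and applying Lemma \ref{decLemma} yields
\[
\|e^{it\Delta}g\|_{L^{p}(\R^{n}\times\T^{d}\times I)}\lesssim_{\epsilon} N^{\epsilon+\frac{n+d}{2}-\frac{n+d+2}{p}}\Bigl(\sum_{m,k}\|e^{it\Delta}g_{\theta_{m,k}}w_{I}\|^{2}_{L^{p}(\R^{n}\times\T^{d}\times\R)}\Bigr)^{1/2}.
\]
Each piece factors as $g_{\theta_{m,k}}(x,y)=e^{2\pi i m\cdot y}h_{m,k}(x)$ with $h_{m,k}=\mathcal{F}_{x}^{-1}(\widehat{g}_{m}\phi_{k})$, so a direct calculation gives $e^{it\Delta}g_{\theta_{m,k}}(x,y,t)=e^{2\pi i(m\cdot y+t|m|^{2})}\,e^{it\Delta_{\R^{n}}}h_{m,k}(x)$ and the modulus depends only on $e^{it\Delta_{\R^{n}}}h_{m,k}$. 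Since $h_{m,k}$ has Fourier support in a cube of diameter $O(1)$ and $p\geq p^{\ast}\geq 2$, Bernstein's inequality together with unitarity of $e^{it\Delta_{\R^{n}}}$ on $L^{2}$ gives $\|e^{it\Delta_{\R^{n}}}h_{m,k}\|_{L^{p}_{x}}\lesssim\|h_{m,k}\|_{L^{2}_{x}}$ uniformly in $t$. Integrating against $w_{I}$ (which decays rapidly off $I$) produces only a factor depending on $|I|$, so $\|e^{it\Delta}g_{\theta_{m,k}}w_{I}\|_{L^{p}}\lesssim_{I}\|h_{m,k}\|_{L^{2}_{x}}$; summing via the finite overlap of $\{\phi_{k}\}$ and Plancherel on $\R^{n}\times\T^{d}$ gives $\sum_{m,k}\|h_{m,k}\|_{L^{2}_{x}}^{2}\lesssim\|f\|_{L^{2}}^{2}$, which combined with the decoupling inequality proves the forward estimate. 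The same argument works on an irrational torus since the additional frequency-dependent phase is still a pure modulation.

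The second (adjoint) bound is a formal duality: the map $f\mapsto e^{it\Delta}P_{\leq N}f$ from $L^{2}(\R^{n}\times\T^{d})$ into $L^{p}(\R^{n}\times\T^{d}\times I)$ has adjoint $h\mapsto\int_{I}e^{-is\Delta}P_{\leq N}h(\cdot,s)\,ds$, and the two operators share the same norm since $e^{-it\Delta}$ is unitary and obeys the same forward estimate after reflecting the time variable. There is no genuine obstacle at this stage, because Lemma \ref{decLemma} has already absorbed the torus-side decoupling into the power of $N$; what remains is unit-scale Euclidean analysis, and the only mildly delicate point is verifying that the weight $w_{I}$ can be replaced by $\mathbf{1}_{I}$ up to $|I|$-dependent constants, which is routine given the rapid decay of $w_{I}$.
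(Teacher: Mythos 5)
Your proposal is correct, and it diverges from the paper's proof at the final step in a way worth noting. After the decoupling via Lemma \ref{decLemma} (which you both apply identically), the paper handles the single piece $f_\theta$ by applying H\"older's inequality in the time variable and then invoking the Euclidean $L_t^q L_x^p$ Strichartz estimate on $\R^n$ with the admissible exponent $q = \tfrac{2(n+d+2)}{n}$. You instead observe that $e^{it\Delta}g_{\theta_{m,k}}$ factors as a unimodular function of $(y,t)$ times $e^{it\Delta_{\R^n}}h_{m,k}(x)$, and since $h_{m,k}$ has unit-scale Fourier support, Bernstein's inequality plus $L^2$-unitarity of the flow gives $\|e^{it\Delta_{\R^n}}h_{m,k}\|_{L^p_x}\lesssim\|h_{m,k}\|_{L^2_x}$ uniformly in $t$; the bounded time interval then contributes only an $|I|$-dependent constant. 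This is more elementary: it makes no use of dispersion or the Strichartz machinery on $\R^n$, exploiting only fixed-time Bernstein and the compactness of $I$. Both routes yield the same sharp power of $N$, and both give the dual estimate by the standard $TT^\ast$/adjoint argument as you indicate (noting the harmless sign flip $e^{-it\Delta}\leftrightarrow e^{it\Delta}$). The paper's Strichartz-based route is perhaps more natural as a warm-up for the global estimate in Section 3.2, where the $L^q_t$ time-integrability genuinely matters, but for the local statement your simpler argument is complete.
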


\begin{proof} We prove the first estimate since the second follows easily by duality.
	
	Suppose $f = P_{\leq N}f$. By interpolating with $p = \infty$ (via Bernstein's inequality) it suffices to prove the endpoint case $p = p^{\ast}$. By Lemma \ref{decLemma}
	$$\|e^{it\Delta}f\|_{L^{p}(\R^{n} \times \T^{d} \times I) } \lesssim_{\epsilon,I} N^{\epsilon} \bigg( \sum_{m,k} \|e^{it\Delta}f_{\theta_{m,k}} \cdot w_{I}\|^{2}_{L^{p}(\R^{n}\times \T^{d} \times \R ) } \bigg)^{1/2}.$$ By Plancharel's theorem it suffices to prove the desired estimate when $f = P_{\theta}f$ and $\theta = \theta_{m,k}$. In this case we apply H\"{o}lder's inequality in time to get $$\|e^{it\Delta}f_{\theta} \cdot w_{I}\|_{L^{p}(\R^{n}\times \T^{d} \times \R ) } \lesssim \|e^{it\Delta}f_{\theta}\|_{L_{t}^{q} L_{x}^{p} },$$ where $q = \frac{2(n+d+2)}{n}$ is the admissible time exponent for the $L_{t}^{q}L_{x}^{p}$ Strichartz estimate on $\R^{n}$. Applying this Strichartz bound completes the proof. 
	
\end{proof}

\begin{remark} In the special case $n = d = 1$ we can instead apply the stronger local Strichartz estimate $$\| e^{it\Delta_{\R \times \T} }f \|_{L^{4} (\R \times \T \times I) } \leq C_{I}\|f\|_{L^{2}(\R \times \T)} $$ due to Takaoka and Tzvetkov \cite{TT}. This will simplify the $\epsilon$-removal argument in Section 4 when $n = d = 1$. \end{remark}

\subsection{Main Argument} We prove Proposition \ref{mainThmEps} in the case $p = p^{\ast}$. The the case $p > p^{\ast}$ is essentially the same (or can be obtained by interpolation). Let $q = \frac{4p^{\ast}}{n(p^{\ast} - 2)} = \frac{2(n+d+2)}{n}$ and let $w_{\gamma}$ be a bump function adapted to $[\gamma - 1, \gamma +1]$. We wish to show that \begin{equation}\label{mixedEstPN}
\bigg(\sum_{\gamma \in \Z} \| e^{it\Delta}P_{\leq N}f \cdot w_{\gamma} \|^{q}_{L^{p^{\ast}} (\R^{n} \times \T^{d} \times \R) }\bigg)^{1/q}  \lesssim_{\epsilon} N^{\epsilon}\|f\|_{L^{2}(\R^{n} \times \T^{d})}.\end{equation} We will assume throughout the section that $f = P_{\leq N}f$. Let $M(f)$ denote the mixed norm on the left-hand side of \eqref{mixedEstPN}. In order to show that $M(f) \lesssim_{\epsilon} N^{\epsilon}\|f\|_{L^{2}}$, we decouple the frequencies to reduce to the case where $f$ has small Fourier support. Let $f_{\theta_{m,k}}$ be defined as in Lemma \ref{decLemma}.  Then by applying Lemma \ref{decLemma} for each $\gamma$ and using Minkowski's inequality in $\ell^{ \frac{q}{2}}$ we obtain \begin{align} \nonumber M(f) &\lesssim_{\epsilon} N^{\epsilon}\bigg( \sum_{\gamma} \bigg( \sum_{m,k}\|e^{it\Delta}f_{\theta_{m,k}} \cdot w_{\gamma}\|^{2}_{L^{p^{\ast}}(\R^{n} \times \T^{d}\times \R)} \bigg)^{q/2}   \bigg)^{1/q} \\ \nonumber &\lesssim_{\epsilon}N^{\epsilon} \bigg( \sum_{m,k} \bigg( \sum_{\gamma} \|e^{it\Delta} f_{\theta_{m,k}}\cdot w_{\gamma}\|_{L^{p^{\ast}}(\R^{n} \times \T^{d} \times \R) }^{q}  \bigg)^{2/q}  \bigg)^{1/2} \\ \label{decReduction} &= C_{\epsilon}N^{\epsilon} \bigg(\sum_{m,k}M(f_{\theta_{m,k}})^{2}\bigg)^{1/2}. \end{align} To complete the proof, we claim that \begin{equation} \label{localizedMixed}M(f_{\theta_{m,k}}) \lesssim \|f_{\theta_{m,k}}\|_{L^{2}(\R^{n}\times \T^{d})}\end{equation} for each $m,k$. This will be enough, since by \eqref{decReduction} and Plancharel's theorem we then have $$M(f) \lesssim_{\epsilon} N^{\epsilon} \bigg(\sum_{m,k}M(f_{\theta_{m,k}})^{2}\bigg)^{1/2} \lesssim_{\epsilon} N^{\epsilon} \bigg(\sum_{m,k} \|f_{\theta_{m,k}}\|_{L^{2}}^{2}\bigg)^{1/2} \lesssim_{\epsilon}N^{\epsilon}\|f\|_{L^{2}}. $$ Now \eqref{localizedMixed} shows that in order to prove \eqref{mixedEstPN} we can in fact assume that the Fourier transform of $f$ is supported in a cube of side length $\sim 1$ in the frequency space $\R^{n} \times \Z^{d}$. 

Let $P_{\theta}f$ denote a smooth frequency cut-off of $f$ onto $\theta$. Then by H\"{o}lder's inequality in time one has $$  \bigg(\sum_{\gamma \in \Z} \| e^{it\Delta}P_{\theta}f \cdot w_{\gamma} \|^{q}_{L^{p^{\ast}} (\R^{n} \times \T^{d} \times \R) }\bigg)^{1/q}  \lesssim \bigg(\sum_{\gamma \in \Z} \| e^{it\Delta}P_{\theta}f \cdot w_{\gamma} \|^{q}_{L_{t}^q L^{p^{\ast}}(\R^{n} \times \T^{d}) }\bigg)^{1/q},$$ where $q = \frac{2(n+d+2)}{n} > p^{\ast}$. It is straightforward to check that $\sum_{\gamma}w_{\gamma} \lesssim 1$, and therefore $$\bigg(\sum_{\gamma \in \Z} \| e^{it\Delta}P_{\theta}f \cdot w_{\gamma} \|^{q}_{L_{t}^q L^{p^{\ast}}(\R^{n} \times \T^{d}) }\bigg)^{1/q} \lesssim \| e^{it\Delta}P_{\theta}f  \|_{L_{t}^q L_{x,y}^{p^{\ast}} }.$$ But $P_{\theta}f$ only has one non-zero Fourier coefficient, and therefore we can use the Euclidean $L_{t}^{q}L_{x}^{p^{\ast}}$ Strichartz estimate to finally obtain $$\| e^{it\Delta}P_{\theta}f \|_{L_{t}^q L_{x,y}^{p^{\ast}} } \lesssim \|P_{\theta}f \|_{L^{2}(\R^{n})} \lesssim \|P_{\theta}f \|_{L^{2}(\R^{n} \times \T^{d})},$$ as desired.

 \subsection{Proof of Decoupling Lemma \ref{decLemma}}

Recall $\alpha_{p} = \frac{n+d}{2} - \frac{n + d + 2}{p}.$ Let $B_{l} \subset \R^{n}$ be a fixed ball of radius $N$, and let $w_{l}$ be a smooth weight adapted to $B_{l} \times [-1,1]$. To prove the lemma it will suffice to show that \begin{equation} \label{eq:localDec}
\|e^{it\Delta}g\|_{L^{p}(B_{l} \times \T^{d} \times [-1,1]) } \lesssim_{\epsilon} N^{\epsilon  + \alpha_p } \bigg( \sum_{m,k} \|e^{it\Delta}g_{\theta_{m,k}}\|^{2}_{L^{p}(w_{l}) } \bigg)^{1/2}.\end{equation} Then to prove the full estimate on $\R^{n} \times \T^{d}$, we can choose a finitely-overlapping collection of balls $B_{l}$ of radius $N$ that cover $\R^{n}$, and then apply \eqref{eq:localDec} in each $B_{l}$ and use Minkowski's inequality to sum: \begin{align*}\|e^{it\Delta}g\|^{p}_{L^{p}(\R^{n} \times \T^{d} \times [-1,1]) }  &\leq \sum_{l} \|e^{it\Delta}g\|^{p}_{L^{p}(B_l \times \T^{d} \times [-1,1]) } \\ &\lesssim_{\epsilon} N^{\epsilon + \alpha_p} \sum_{l}\bigg( \sum_{m,k} \|e^{it\Delta}g_{\theta_{m,k}}\|^{2}_{L^{p}(w_{l}) } \bigg)^{p/2}\\ &\lesssim_{\epsilon} N^{\epsilon + \alpha_p} \bigg( \sum_{m,k} \|e^{it\Delta}g_{\theta_{m,k}}\|^{2}_{L^{p}(w)} \bigg)^{p/2}, \end{align*} where the $w_{l}$ are weights adapted to $B_{l} \times [-1,1]$ and $w = \sum_{l}w_{l}.$ Now if $v$ is a smooth weight on $\R$ that rapidly decays outside $[-1,1]$ it is easy to check that $w \leq Cv$, so Lemma \ref{decLemma} will follow.  

Let $u(x,y,t) = e^{it\Delta}g (x,y).$ We begin by rescaling $u_0$ to have frequency support in $[-1,1]^{n+d}$. Note that $$u(N^{-1}x, N^{-1}y, N^{-2}t) = N^{n} \int_{B_{1}^{n}} \sum_{m \in N^{-1}\Z^{d} \cap B^{d}_{1}} \widehat{g}(N\xi, Nm) e^{i(x \cdot \xi + y\cdot m + t(|\xi|^{2} + |m|^{2}) )} \ d\xi,$$ 
where $B^{n}_{1}$ and $B^{d}_{1}$ are $[-1,1]^{n}$ and $[-1,1]^{d}$, respectively. We let $f(\xi, m) = \widehat{g}(N\xi, Nm)$, so if $\Lambda_N = N^{-1}\Z^{d} \cap [-1,1]^{d}$ then $f$ is supported on $B^{n}_1 \times \Lambda_{N}$. Below we will exploit the fact that $f$ can be viewed as a function on $n$-dimensional cubes of size $\sim 1$ that are $\sim N^{-1}$-separated in $B_1^{n+d}$ (this follows from the support property of $\widehat{g}$; we have one cube for each $m$).   Let $Ef$ denote the extension operator $$Ef = \sum_{m \in \Lambda_N} \int_{B^{n}_1}f(\xi, m) e^{i(x \cdot \xi + y\cdot m + t(|\xi|^{2} + |m|^{2}) )} \ d\xi.$$ After applying a change of variables on the spatial side and using periodicity in the $y$ variable, we see that \begin{align*} \|u\|_{L^{p}(B_{N} \times \T^{d} \times [0,1])} &= N^{n}N^{-(n + d + 2)/p} \|Ef\|_{L^{p}(B_{N^2} \times N\T^{d} \times [0,N^2])} \\ \nonumber &= N^{n(1 - \frac{1}{p})-\frac{d + 2}{p}}N^{-\frac{d}{p}}\|Ef\|_{L^{p}(B_{N^2} \times N^2\T^{d} \times [0,N^2])}.\end{align*} In particular, for $p^{\ast} = \frac{2(n+d+2)}{n+d}$ we have \begin{equation}  \label{extReduction}\|u\|_{L^{p^{\ast}}(B_{N} \times \T^{d} \times [0,1])} = N^{\frac{n-d}{2} -\frac{d}{p^{\ast}} }\|Ef\|_{L^{p^{\ast}}(B_{N^2} \times N^2\T^{d} \times [0,N^2])}\end{equation} for any solution $u$ with initial data $u_0$ and $f = \widehat{u_0}(N\xi, Nm).$

We also introduce the operator $\widetilde{E}$ on $C^{\infty}([-1,1]^{n+d})$ defined by  $$\widetilde{E}h(x,y,t) = \int_{B^{d}_1} \int_{B^{n}_1}h(\xi_1, \xi_2) e^{i(x \cdot \xi_1 + y\cdot \xi_2 + t(|\xi_1|^{2} + |\xi_2|^{2}) )} \ d\xi_1 d\xi_2$$ (this is the usual extension operator associated to the paraboloid $P^{n+d}$ in $\R^{n+d+1}$). Given a function $f$ on $[-1,1]^{n}\times \Lambda_N$, let $$f^{\delta}(\xi_1, \xi_2) = \sum_{m \in \Lambda_N}c_d\delta^{-d}1_{\{|\xi_2 - m| \leq \delta\} }f(\xi_1, m), \ \ \ \ \ \delta < \frac{1}{N}$$ where $c_{d}$ is a dimensional constant chosen for normalization. Then by Lebesgue differentiation and Fatou's lemma we have \begin{equation} \label{fatou}
\|Ef\|_{L^{p}(B_{N^2} \times N^2\T^{d} \times [0,N^2])} \leq \liminf_{\delta \rightarrow 0}\|\widetilde{E}f^{\delta}\|_{L^{p}(B_{N^2} \times [0,N^{2}]^{d} \times [0,N^2])}, \end{equation} where as usual we identify $N^{2}\T^{d}$ with $[0,N^{2}]^{d}$. We will begin by estimating $\widetilde{E}f$ for arbitrary $f$ on $B^{n+d}_1$ before specializing to $f^{\delta}$ and passing to the limit later in the argument. 

Fix a parameter $R \lesssim N$ and choose a finitely-overlapping collection of $(n+d)$-dimensional boxes $\theta_{m,k} = I_{k} \times I_{m} \subset B^{n}_1 \times B^{d}_1$ with the following properties: 
\begin{enumerate}[label=(\roman*)] \item Each $\theta_{m,k}$ has side lengths $\sim R^{-1}$  \item $I_{m}$ is centered at $m \in \Lambda_{N}$ \item $m$ varies over an $\sim R^{-1}$-separated subset of $\Lambda_N$ such that the cubes $I_{m}$ cover $[-1,1]^{d}.$ \end{enumerate}

\noindent Such a collection of $\theta_{m,k}$ yields a finitely-overlapping tiling of $B^{n+d}_1$ by boxes of side length $\sim R^{-1}$, which corresponds to a covering of the paraboloid $P^{n+ d}$ by $R^{-1}$-caps. Let $\{\varphi_{m,k} \}$ be a smooth partition of unity subordinate to this cover, and let $f_{m,k} = f \cdot \varphi_{m,k}$. By Theorem \ref{thm:BDparabola} $$\|\widetilde{E}f\|_{L^{p} (B_{N^{2}} \times [0,N^2]^{d} \times [0,N^2])} \lesssim_{\epsilon}R^{\epsilon + \alpha_{p}} \bigg( \sum_{m,k} \|\widetilde{E}f_{m,k}\|^{2}_{L^{p}(w_{N^2})} \bigg)^{\frac{1}{2}}, $$ where $w_{N^2}$ is a bump function adapted to $B_{N^{2}} \times [0,N^2]^{d} \times [0,N^2].$ Setting $R = N$ and $p = p^{\ast}$ yields 

\begin{equation} \label{decouple1} \|\widetilde{E}f\|_{L^{p^{\ast}} (B_{N^2} \times [0,N^2]^{d} \times [0,N^2])} \lesssim_{\epsilon}N^{\epsilon } \bigg( \sum_{m,k} \|\widetilde{E}f_{m,k}\|^{2}_{L^{p^{\ast}}(w_{N^2})} \bigg)^{\frac{1}{2}}, \end{equation} 

\noindent with the $f_{m,k}$ supported on finitely-overlapping boxes of side lengths $\sim \frac{1}{N}$. Specializing to $f^{\delta}$ and taking a limit as in \eqref{fatou}, we get $$\|Ef\|_{L^{p^{\ast}}(B_{N^2} \times N^{2}\T^{d} \times [0,N^2] )} \lesssim_{\epsilon} \liminf_{\delta \rightarrow 0}N^{\epsilon} \bigg( \sum_{m,k} \|\widetilde{E}f^{\delta}_{m,k}\|^{2}_{L^{p^{\ast}}(w_{N^2})} \bigg)^{\frac{1}{2}}.$$ Then as a consequence of the scaling \eqref{extReduction} 

\begin{equation} \label{decFinalStep}\|u\|_{L^{p^{\ast}}(B_N \times \T^{d} \times [0,1])} \lesssim_{\epsilon}\liminf_{\delta \rightarrow 0} N^{\frac{n-d}{2} -\frac{d}{p^{\ast}} + \epsilon }\bigg( \sum_{m,k} \|\widetilde{E}f^{\delta}_{m,k}\|^{2}_{L^{p^{\ast}}(w_{N^2})} \bigg)^{\frac{1}{2}},\end{equation}

\noindent where as above $f(\xi, n) = \widehat{u_0}(N\xi, Nn)$ for $n \in \Lambda_N$. Now by rescaling as before and writing out the definition of $f^{\delta}$ we get $$  N^{\frac{n-d}{2} -\frac{d}{p^{\ast}} }\|\widetilde{E}f^{\delta}_{m,k}\|_{L^{p^{\ast}}(w_{N^2})} \approx  N^{-\frac{d}{p^{\ast}} }\|u^{\delta} \|_{L^{p^{\ast}}(v)}, $$ where $$u^{\delta} = \sum_{\substack{l \in \Z^{d} \\ |l - m| \lesssim 1} } \int_{\R^{d}}\int_{\R^{n}} c(N\delta)^{-d}1_{\{|\xi_2 - l| \leq N\delta\} }f_{m,k}(N^{-1}\xi_1, N^{-1}l) e^{2\pi i (x\cdot\xi_1 + y\cdot\xi_2 + t(|\xi_1|^2 + |\xi_2|^2 ) )} d\xi_1 d\xi_2 $$ and $v$ is a suitable bump function adapted to $B_{N} \times [0,N]^{d} \times [-1,1].$ The desired result now follows from the pointwise estimate $$|u^{\delta}|  \leq \sum_{\substack{l \in \Z^{d} \\ |l - m| \lesssim 1} } \big| \int_{\R^{n}}f_{m,k}(N^{-1}\xi_1, N^{-1}l) e^{2\pi i (x\cdot\xi_1 + t|\xi_1|^2  )} d\xi_1\big| ,$$ which is uniform in $\delta$. 

The case $p > p^{\ast}$ follows by a similar argument (or interpolation).

\section{The $\epsilon$-Removal Argument}

In this section we show that the $N^{\epsilon}$ loss from our Strichartz estimates can be removed for $p > p^{\ast}$, where as before $p^{\ast}$ is the endpoint $p^{\ast} = \frac{2(n+d+2)}{n+d}.$ The argument has a local and global component. In the local case the philosophy is the same as in \cite{B} and \cite{KV}: one applies the Strichartz estimate with $\epsilon$ loss in the region where the operator is in some sense `small,' and this leaves a `large' portion of the operator which we can control with direct estimates for the associated kernel $K$, at least for $p$ away from the endpoint. To extend the $\epsilon$-removal to the global case we combine the local estimates with an interpolation argument to handle the global portion of a relevant bilinear form. 

\subsection{The Local-In-Time Case}

We begin the argument by first showing that one can remove the $N^{\epsilon}$ loss from the local-in-time estimate from Proposition \ref{localStrichartz}.

\begin{prop} \label{prop:localEpsRemoval} Suppose $p > \frac{2(n+d+2)}{n+d}$. Then $$\|e^{it \Delta_{\R^{n}\times \T^{d} }} P_{\leq N} f \|_{L^{p}(\R^{n} \times \T^{d} \times [0,1] )} \lesssim N^{\frac{n+d}{2} - \frac{n + d + 2}{p}}\|f\|_{L^{2}(\R^{n} \times \T^{d} )}.$$
\end{prop}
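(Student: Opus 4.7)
The plan is to use the $TT^{\ast}$ method together with pointwise bounds on the Schr\"odinger kernel: apply Proposition~\ref{mainThmEps} on the region where the kernel is singular, and handle the rest with direct Young-type estimates exploiting the dispersive character of the Euclidean factor. By the dual form of the local Strichartz bound (Proposition~\ref{localStrichartz}), Proposition~\ref{prop:localEpsRemoval} is equivalent to the convolution estimate
\[
\|K_{N}\ast g\|_{L^{p}(\R^{n}\times\T^{d}\times[0,1])}\lesssim N^{2s_{p}}\,\|g\|_{L^{p'}(\R^{n}\times\T^{d}\times[0,1])},\qquad s_{p}=\tfrac{n+d}{2}-\tfrac{n+d+2}{p},
\]
where $K_{N}$ is the space-time kernel of $e^{it\Delta}P_{\leq N}^{2}$. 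Using the product structure of the Laplacian and a tensor-product Littlewood--Paley cutoff, we factor $K_{N}(x,y,t)=K_{N}^{\R}(x,t)\,K_{N}^{\T}(y,t)$. Stationary phase on the Euclidean factor gives $|K_{N}^{\R}(x,t)|\lesssim |t|^{-n/2}$ for $|t|>N^{-2}$, with Schwartz decay outside $|x|\lesssim N|t|$, while only the trivial bound $|K_{N}^{\T}(y,t)|\lesssim N^{d}$ is available on the torus factor.

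Next I would fix a small $\alpha>0$ and split $K_{N}=K_{N}^{\mathrm{near}}+K_{N}^{\mathrm{far}}$ according to $|t|\leq N^{-\alpha}$ or $|t|>N^{-\alpha}$. For the near piece the bilinear form $\langle K_{N}^{\mathrm{near}}\ast g,h\rangle$ is controlled by applying Proposition~\ref{mainThmEps} at the endpoint $p^{\ast}$ and then H\"older-ing in time: since the time region has measure $O(N^{-\alpha})$, converting from the $L^{(p^{\ast})'}\times L^{(p^{\ast})'}$ pairing down to the $L^{p'}\times L^{p'}$ pairing produces a power $N^{-\alpha\theta}$ with $\theta=\theta(p)>0$, which absorbs the $N^{\epsilon}$ loss provided $\alpha$ is chosen large enough in terms of $\epsilon$. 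For the far piece I insert $|K_{N}^{\mathrm{far}}|\lesssim N^{d}|t|^{-n/2}$ and bound the bilinear form by Young's inequality in $(x,y)$ combined with a Hardy--Littlewood--Sobolev integral in $t$ (whose structure is morally the same as the time convolution in Proposition~\ref{prop:discreteHLS}, but in continuous form). The resulting integral $\int_{N^{-\alpha}}^{1}|t|^{-n(1-2/p)/2}dt$ converges to a bound independent of $N$ exactly when $p>p^{\ast}$; at $p=p^{\ast}$ it diverges logarithmically, which is precisely the mechanism responsible for the $N^{\epsilon}$ loss in Proposition~\ref{mainThmEps}.

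The main obstacle I anticipate is the exponent bookkeeping: both the near and the far contributions must give at most $N^{2s_{p}}$ with no residual $\epsilon$, which requires tuning $\alpha$ so that the measure-gain in the near piece strictly beats $N^{\epsilon}$ while simultaneously producing the correct overall power. A secondary subtlety is that $K_{N}^{\T}$ does not decay in $y$, so the spatial Young step in the far piece must be carried out separately on $\R^{n}$ and on $\T^{d}$: Young's inequality gains from the spatial decay of $K_{N}^{\R}$, whereas on the torus we only use the trivial $\|K_{N}^{\T}(\cdot,t)\|_{L^{p/2}_{y}}\lesssim N^{d(1-2/p)}$. Balancing the $N$-powers coming from the two spatial Young steps against the $|t|^{-n/2}$ time decay in the HLS estimate is exactly what selects $p^{\ast}=2(n+d+2)/(n+d)$ as the breakpoint, and this is where the proof uses the product geometry of $\R^{n}\times\T^{d}$ rather than purely Euclidean or purely periodic techniques.
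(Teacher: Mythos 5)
Your proposal is a genuinely different route from the paper's, and unfortunately the far-field half of it does not close. The key defect is the torus bound $\|K_N^{\T}(\cdot,t)\|_{L^{p/2}_y}\lesssim N^{d(1-2/p)}$. Combining it with the Euclidean-factor estimate $\|K_N^{\R}(\cdot,t)\|_{L^{p/2}_x}\lesssim N^{2n/p}|t|^{2n/p-n/2}$ (coming from $|K_N^\R|\lesssim|t|^{-n/2}$ on $|x|\lesssim N|t|$), the Young-in-space/HLS-in-time step gives the far contribution a constant $\sim N^{2n/p + d(1-2/p)}$. But a direct computation shows $2n/p + d(1-2/p) - 2\alpha_p = (4n+4)/p - n$, which is strictly positive for every $p < \frac{4(n+1)}{n}$; since $p^\ast < \frac{4(n+1)}{n}$ always, the power of $N$ overshoots $N^{2\alpha_p}$ for all $p$ near $p^\ast$. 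Concretely for $(n,d)=(1,2)$ at $p=p^\ast=\tfrac{10}{3}$ you get $N^{7/5}$ where $N^{0}$ is needed. Your assertion that the time integral $\int|t|^{-n(1-2/p)/2}dt$ "diverges logarithmically exactly at $p=p^\ast$" is also incorrect: that exponent equals $1$ when $p=\frac{2n}{n-2}$, never when $p=p^\ast$ (for $d>0$). The near piece also has a sign problem: since $p'<(p^\ast)'$, H\"older in time over an interval of measure $N^{-\alpha}$ produces a factor $\geq 1$, not a gain $N^{-\alpha\theta}$.

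The deeper issue is conceptual: the pointwise bound $|K_N^{\T}|\lesssim N^d$ is sharp only near $t=0$ (and near rationals $a/q$ with small $q$); for generic $t$, Weyl-sum cancellation gives $\sim N^{d/2}$, and your estimate ignores this completely. The paper explicitly remarks that over $[0,1]$ one cannot profit from the $\R^n$ dispersion, and instead discretizes the Euclidean kernel (Lemma~\ref{lem:discrete}) so that the full kernel becomes a Weyl sum as on $\T^{n+d}$, then uses the Bourgain bound~\eqref{WeylEst}. The argument is a level-set decomposition in $\mu$ (not a near/far split in $t$): the small super-level sets are handled by Chebyshev plus the $\epsilon$-loss estimate, and for large $\mu$ the kernel is split into a major-arc piece $\widetilde K$ (times well-approximated by small-denominator rationals) controlled by the Killip--Visan estimate (Proposition~\ref{prop:KV}), with the minor-arc remainder absorbed via the pointwise bound $|K-\widetilde K|\lesssim N^{n+d-(n+d)\rho}$. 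This number-theoretic structure is precisely what your proposal is missing.
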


\noindent To prove the proposition we reduce the problem to a situation where we can apply ideas due to Killip and Visan. The argument in this local setting is almost the same as their proof of $\epsilon$-removal in the case $n = 0$ from \cite{KV}. 

Normalize $\|f\|_{L^{2}} = 1$ and let $F = e^{it \Delta_{\R^{n}\times \T^{d} }} P_{\leq N} f$. Using the level set characterization of the $L^{p}$ norm and Bernstein's inequality we get 
$$\|F \|^{p}_{L^{p}(\R^{n} \times \T^{d} \times [0,1] )} = p\int_{0}^{CN^{ \frac{n+d}{2}}} \mu^{p-1} |\{ (x,y,t) : |F(x,y,t)| > \mu  \}| \ d\mu.$$ Now if $\delta > 0$ then we can apply Chebyshev's inequality and Proposition \ref{localStrichartz} to see that 

\begin{align*}\int_{0}^{N^{\frac{n+d}{2} - \delta}} \mu^{p-1} |\{ F > \mu  \}| \ d\mu &\lesssim_{\epsilon} N^{p^{\ast}(\frac{n+d}{2} -\frac{n + d + 2}{p^{\ast}})} N^{(p - p^{\ast}) (\frac{n+d}{2} - \delta ) + \epsilon} \\ &\leq C_{\epsilon} N^{p(\frac{n+d}{2} -\frac{n + d + 2}{p})}N^{\epsilon - \delta(p - p^{\ast})} \\ &\leq CN^{p(\frac{n+d}{2} -\frac{n + d + 2}{p})} \end{align*} provided $\epsilon$ is chosen small enough. It remains to estimate the large portion of the integral \begin{equation}\label{removalLargePart}A := \int_{N^{\frac{n+d}{2} - \delta}}^{CN^{\frac{n+d}{2}}} \mu^{p-1} |\{ F > \mu  \}| \ d\mu.\end{equation} We let $\Omega = \{ F > \mu  \}$ for $\mu \geq N^{\frac{n+d}{2} - \delta}$ fixed, and set $\Omega_{\omega} = \{ \text{Re}(e^{i\omega}F) > \mu/2 \}$. Note that there is some choice of $\omega \in \{ 0, \frac{\pi}{2}, \pi, \frac{3\pi}{2} \}$ such that $|\Omega| \leq 4 |\Omega_{\omega}|$, so we will estimate $|\Omega_{\omega}|$ instead. In particular 
\begin{equation}\label{localRemovalSet} \mu^{2}|\Omega_{\omega}|^{2} \lesssim \langle \mathbbm{1}_{\Omega_{\omega}}, K \ast \mathbbm{1}_{\Omega_{\omega}} \rangle_{L^{2}_{x,y,t}},  \end{equation}   

\noindent where $K$ is the kernel $$K(x,y,t) = \big(\sum_{l \in \Z^{d}}\psi(N^{-1}l) e^{2\pi i (y\cdot l + t|l|^{2})} \big) \cdot \big( \int_{\R^{n}} \psi (N^{-1}\xi) e^{2\pi i (x\cdot \xi + t|\xi|^{2})} d\xi\big) $$ from above. 

Since $t \in [0,1]$ we cannot take advantage of the dispersion coming from the $\R^{n}$ component of $K$. Instead we discretize the kernel and proceed as if we were working on $\T^{n+d}$. 

\begin{lemma}\label{lem:discrete} Let $K_{\R}(x,t) = \int_{\R^{n}} \psi (N^{-1}\xi) e^{2\pi i (x\cdot \xi + t|\xi|^{2})} d\xi.$ Then if $Q_0$ is the cube of side-length 1 centered at the origin we have $$|K_{\R}(x,t)| \lesssim  \sup_{\alpha \in Q_{0}} \big| \sum_{\substack{ k \in \Z^{n} \\ |k| \leq cN  } } \psi(N^{-1}(\alpha + k)) e^{2\pi i [ (x + 2t\alpha)\cdot k + t|k|^{2} ] }  \big|$$
\end{lemma}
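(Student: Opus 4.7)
The plan is to split the Euclidean integral defining $K_{\R}$ into a sum over integer translates of the unit cube $Q_0$, and then exploit the algebraic identity $|\alpha + k|^2 = |\alpha|^2 + 2\alpha\cdot k + |k|^2$ to factor out the continuous parameter $\alpha$ cleanly from the discrete sum over $k \in \Z^n$.

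Concretely, I would first write
\[
K_{\R}(x,t) = \sum_{k \in \Z^n} \int_{Q_0} \psi(N^{-1}(\alpha+k))\, e^{2\pi i (x\cdot(\alpha+k) + t|\alpha+k|^{2})}\, d\alpha,
\]
obtained by changing variables $\xi = \alpha + k$ on each unit cube $Q_0 + k$. Using $|\alpha+k|^2 = |\alpha|^2 + 2\alpha\cdot k + |k|^2$, the phase factors as
\[
e^{2\pi i (x\cdot(\alpha+k) + t|\alpha+k|^{2})} = e^{2\pi i (x\cdot\alpha + t|\alpha|^{2})}\, e^{2\pi i[(x + 2t\alpha)\cdot k + t|k|^{2}]},
\]
so after swapping sum and integral (justified by rapid decay of $\psi$, which makes the sum finite) I get
\[
K_{\R}(x,t) = \int_{Q_0} e^{2\pi i (x\cdot\alpha + t|\alpha|^{2})} \Bigl(\sum_{k \in \Z^n} \psi(N^{-1}(\alpha+k))\, e^{2\pi i[(x + 2t\alpha)\cdot k + t|k|^{2}]}\Bigr) d\alpha.
\]

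Now I take absolute values: the outer phase has modulus $1$, $|Q_0| = 1$, and the inner sum is continuous in $\alpha$, so bounding the integrand by its supremum over $\alpha \in Q_0$ gives exactly the claimed inequality. The support restriction $|k| \leq cN$ comes for free from the compact support of $\psi$: since $\psi(N^{-1}(\alpha+k))$ vanishes unless $|\alpha+k| \leq N \cdot (\text{radius of } \mathrm{supp}\,\psi)$, and $|\alpha|$ is bounded uniformly on $Q_0$, only $|k| \lesssim N$ contribute.

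There is essentially no obstacle here — the lemma is a bookkeeping identity, and the only thing to watch is the interchange of sum and integral (trivial since $\psi$ is compactly supported, so only $O(N^n)$ terms are nonzero) and the precise value of $c$ (any $c$ slightly larger than the radius of $\mathrm{supp}\,\psi$ works). The point of the lemma, of course, is that the right-hand side is now a Weyl-type exponential sum over $\Z^n$, and such sums at rational times $t$ admit the standard Gauss-sum estimates that will be exploited in the subsequent $\epsilon$-removal argument, mirroring the torus case treated by Killip--Visan.
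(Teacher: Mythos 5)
Your proof is correct and follows exactly the same route as the paper: tile $\R^n$ by integer translates of $Q_0$, change variables $\xi = \alpha + k$ on each cube, factor the phase via $|\alpha+k|^2 = |\alpha|^2 + 2\alpha\cdot k + |k|^2$, and bound the $\alpha$-integral by the supremum of the inner sum. Your extra remarks about the interchange of sum and integral and the origin of the constraint $|k|\le cN$ are fine clarifications but do not change the argument.
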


\begin{proof}
	Let  $Q_k = Q_0 + k$, where $k \in \Z^{n}$. We can write$$K_{\R}(x,t) = \sum_{k \in \Z^{n}} \int_{Q_k} \psi (N^{-1}\xi) e^{2\pi i (x\cdot \xi + t|\xi|^{2})} d\xi.$$ Now making the change of variables $\xi = \alpha + k$ for each $k$, where $\alpha \in Q_0$, we get $$K_{\R}(x,t) = \int_{Q_0} e^{2\pi i (x \cdot \alpha + t|\alpha|^{2})} \cdot  \sum_{\substack{ k \in \Z^{n} \\ |k| \leq cN }} \psi(N^{-1}(\alpha + k)) e^{2\pi i [ (x + 2t\alpha)\cdot k + t|k|^{2} ] } \ d\alpha.$$ The lemma follows by taking absolute values inside the integral and taking the supremum in $\alpha$. 
\end{proof}

We will now use a pointwise estimate for $K$ originally due to Bourgain \cite{B}. Given integers $1 \leq q < N$ and $1 \leq a < q$ with $(a,q) = 1$, let $S_{q,a} = \{ t \in [0,1] : |t - \frac{a}{q}| \leq \frac{1}{qN} \}$. We apply Lemma \ref{lem:discrete} and the proof of Lemma 3.18 in \cite{B} for fixed $\alpha$ to obtain \footnote{ Recall that Dirichlet's approximation theorem implies that either $t \in S_{q,a}$ for some pair $(q,a)$, or $t$ is close to 0.} 
\begin{equation}\label{WeylEst} |K(x,y,t)| \mathbbm{1}_{S_{q,a}}(t) \lesssim \frac{N^{n+d}}{q^{\frac{n+d}{2}} (1 + N^{n+d}|t - \frac{a}{q}|^{\frac{n+d}{2}}) }. \end{equation} 

\noindent Following Killip and Visan \cite{KV}, define the `large' set $$\mathcal{T} = \big\{t \in [0,1] : qN^{2}|t - a/q| \leq N^{2\rho} \text{ for some } q \leq N^{2\rho}, \text{ and } (a,q) = 1 \big\},$$ where $\rho > 0$ is a small parameter to be determined below. Also define $$\widetilde{K}(x,y,t) = K(x,y,t)\mathbbm{1}_{\mathcal{T}}(t),$$ and observe that from \eqref{WeylEst} 
\begin{equation} \label{smallKernelEst} |K - \widetilde{K}| \lesssim N^{n + d - (n+d)\rho}.  \end{equation} This implies that $$|\langle \mathbbm{1}_{\Omega_{\omega}}, (K - \widetilde{K}) \ast \mathbbm{1}_{\Omega_{\omega}} \rangle_{L^{2}_{x,y,t}}| \lesssim N^{n + d - (n+d)\rho}|\Omega_{\omega}|^{2},$$ and since $\mu \geq N^{\frac{n+d}{2} - \delta}$ we can absorb the contribution of $K - \widetilde{K}$ to the left-hand side of \eqref{localRemovalSet} provided we take $\rho < \frac{2\delta}{n+d}.$ To estimate the contribution from $\widetilde{K}$ we use the following result due to Killip and Visan:

\begin{prop}[\cite{KV}] \label{prop:KV}Suppose $r > \frac{2(n+d+2)}{n+d}.$ Then $$\|\widetilde{K} \ast F \|_{L^{r}(\R^{n} \times \T^{d} \times [0,1] )} \lesssim N^{2(\frac{n+d}{2} -\frac{n+d+2}{r} )}\|F\|_{L^{r'}(\R^{n} \times \T^{d} \times [0,1] )},$$ provided $\rho$ is chosen small enough (depending only on $n,d,r$). \end{prop}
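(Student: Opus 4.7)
The plan is to follow the $TT^{*}$-type approach of Killip and Visan: reduce convolution with $\widetilde{K}$ to a time-convolution with an operator-valued spatial kernel, interpolate between the trivial $L^{2}\to L^{2}$ bound and the pointwise Weyl-type bound \eqref{WeylEst}, and then apply Young's inequality in the time variable.

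First, I partition $\mathcal{T}$ into the arcs $\tilde{S}_{q,a} = \{s \in [0,1]: |s - a/q| \leq N^{2\rho - 2}/q\}$ over pairs $(q,a)$ with $q \leq N^{2\rho}$, $1 \leq a \leq q$ and $(a,q)=1$. For $\rho < 1/2$ these are contained in the arcs $S_{q,a}$ where \eqref{WeylEst} applies. For each $s$, let $T_{s}g := K(\cdot,\cdot,s) \ast_{x,y} g$. Since $\widehat{K}(\cdot,\cdot,s)$ is uniformly bounded by $1$ in the $(x,y)$-Fourier variables, Plancherel yields $\|T_{s}\|_{L^{2}\to L^{2}} \lesssim 1$, while trivially $\|T_{s}\|_{L^{1}\to L^{\infty}} \leq \|K(\cdot,\cdot,s)\|_{\infty}$, which is controlled by \eqref{WeylEst}. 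Riesz--Thorin then gives, for $s \in \tilde{S}_{q,a}$,
\[
\|T_{s}\|_{L^{r'}\to L^{r}} \lesssim \bigg(\frac{N^{n+d}}{q^{(n+d)/2}(1 + N^{n+d}|s-a/q|^{(n+d)/2})}\bigg)^{1 - 2/r}.
\]

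Second, writing $\widetilde{K}\ast F(x,y,t) = \int_{\mathcal{T}} T_{s}[F(\cdot,\cdot,t-s)](x,y)\, ds$ and applying Minkowski in $L^{r}_{x,y}$ followed by Young's inequality in $t$ (with the admissible triple $(r/2,\, r',\, r)$), one obtains
\[
\|\widetilde{K}\ast F\|_{L^{r}_{x,y,t}} \leq \|a\|_{L^{r/2}_{t}}\|F\|_{L^{r'}_{x,y,t}}, \qquad a(s) := \|T_{s}\|_{L^{r'}\to L^{r}}\mathbbm{1}_{\mathcal{T}}(s).
\]
On each $\tilde{S}_{q,a}$ the change of variables $w = N^{2}(s - a/q)$ gives
\[
\int_{\tilde{S}_{q,a}} a(s)^{r/2}\, ds \lesssim \frac{N^{(n+d)(r-2)/2 - 2}}{q^{(n+d)(r-2)/4}} \int_{|w| \leq N^{2\rho}/q} \frac{dw}{(1 + |w|^{(n+d)/2})^{(r-2)/2}}.
\]
The hypothesis $r > p^{\ast}$ is precisely what forces the integrand to be integrable at infinity, so the inner integral is $O(1)$. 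Summing the $\leq q$ admissible residues for each $q$ and then over $q \leq N^{2\rho}$ yields
\[
\|a\|_{L^{r/2}}^{r/2} \lesssim N^{(n+d)(r-2)/2 - 2}\sum_{q \leq N^{2\rho}} q^{1 - (n+d)(r-2)/4};
\]
taking the $2/r$ power recovers the target exponent $N^{(n+d) - 2(n+d+2)/r} = N^{2(\frac{n+d}{2} - \frac{n+d+2}{r})}$.

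The main technical subtlety, which dictates the smallness of $\rho$, is the $q$-sum above. For $r > 2 + 8/(n+d)$ it is uniformly $O(1)$, but in the intermediate range $p^{\ast} < r \leq 2 + 8/(n+d)$ it grows at most like $N^{2\rho(2 - (n+d)(r-2)/4)}$, a small positive power of $N$ that can be made arbitrarily close to $1$ by choosing $\rho = \rho(n,d,r)$ sufficiently small, consistent with the condition on $\rho$ already required by \eqref{smallKernelEst}; the resulting loss is harmless in the final $\epsilon$-removal integration in $\mu$.
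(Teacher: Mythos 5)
Your overall strategy (Riesz--Thorin at each time slice, followed by Young's inequality in $t$) is a natural way to approach this, and your computations are correct up to and including the identity $\|a\|_{L^{r/2}}^{r/2} \lesssim N^{(n+d)(r-2)/2-2}\sum_{q\le N^{2\rho}}q^{1-(n+d)(r-2)/4}$, as well as the observation that the inner integral in $w$ converges precisely when $r > p^{\ast}$. The paper's own proof of this proposition is a one-line citation to Section 2 of Killip--Visan, so in a sense you have supplied more detail than the paper does; your argument and their cited one both interpolate the trivial $L^2\to L^2$ bound against the Weyl-type pointwise kernel bound.

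However, there is a genuine gap in how you dispose of the $q$-sum, and the closing claim that the loss is ``harmless in the final $\epsilon$-removal integration in $\mu$'' does not hold up. For $p^{\ast} < r < 2 + \frac{8}{n+d}$ the exponent $1 - \frac{(n+d)(r-2)}{4}$ lies in $(-1,1)$, so $\sum_{q\le N^{2\rho}}q^{1-(n+d)(r-2)/4}\sim N^{2\rho(2-(n+d)(r-2)/4)}$ and your bound becomes $\|\widetilde K\ast F\|_{L^r}\lesssim N^{2\alpha_r + c(r)\rho}\|F\|_{L^{r'}}$ with $c(r)>0$. Feeding this into the paper's level-set argument gives $|\Omega|\lesssim N^{r\alpha_r + c(r)\rho(r/2)}\mu^{-r}$, and since the $\mu$-integral over $[N^{(n+d)/2-\delta},\,CN^{(n+d)/2}]$ is $\sim N^{(p-r)(n+d)/2}$ (dominated by its upper endpoint), one ends up with $A\lesssim N^{p\alpha_p + c'(r)\rho}$ for a \emph{fixed positive} exponent $c'(r)\rho$: this is exactly the kind of small power of $N$ that Section 4 is trying to eliminate, not absorb. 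There is no choice of $\rho>0$ (nor of $r\in(p^{\ast},p)$ when $p\le 2+\frac{8}{n+d}$, nor of $\delta$) that makes this extra factor disappear --- it can be made $N^{\epsilon}$ for any $\epsilon>0$, but never removed, so your argument only reproduces Proposition \ref{mainThmEps}, not the clean Proposition \ref{prop:localEpsRemoval}. The Killip--Visan argument the paper invokes must therefore handle the sum over major arcs more carefully than a direct Young's-inequality estimate; this is the key technical point you would need to reconstruct before the proof is complete in the range $p^{\ast}<r\le 2+\frac{8}{n+d}$.
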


\begin{proof} The estimate follows by applying the argument from \cite{KV}, Section 2, after bounding the kernel $K$ pointwise via Lemma \ref{lem:discrete} and \eqref{WeylEst}. \end{proof}

Applying this proposition to \eqref{localRemovalSet}, we obtain $$\mu^{2}|\Omega_{\omega}|^{2} \lesssim |\langle \mathbbm{1}_{\Omega_{\omega}}, \widetilde{K} \ast \mathbbm{1}_{\Omega_{\omega}} \rangle_{L^{2}_{x,y,t}}| \lesssim |\Omega_{\omega}|^{\frac{2}{r'}}N^{2(\frac{n+d}{2} -\frac{n+d+2}{r})}.$$ We can then conclude that \begin{equation} \label{OmegaFianlEst} |\Omega| \leq 4|\Omega_{\omega}| \lesssim N^{\frac{r}{2}(n+d -  \frac{2(n+d+2)}{r})}\mu^{-r}\end{equation} for any $r \in (\frac{2(n+d+2)}{n+d}, p).$ Plugging \eqref{OmegaFianlEst} into \eqref{removalLargePart} gives \begin{align*}A &\lesssim N^{\frac{r}{2}(n+d -  \frac{2(n+d+2)}{r})} \int_{N^{\frac{n+d}{2} - \delta}}^{CN^{\frac{n+d}{2}}} \mu^{p - r - 1} d\mu \\ &\lesssim  N^{p(\frac{n+d}{2} - \frac{n + d + 2}{p})},\end{align*} completing the proof of Proposition \ref{prop:localEpsRemoval}.

\vspace{2mm} 

\begin{remark} Proposition \ref{prop:KV} remains true if $\T^{d}$ is replaced by a $d$-dimensional irrational torus, after a suitable modification of the pointwise estimate \eqref{WeylEst} to take into account the irrational parameters. See \cite{KV} for more details.\end{remark}

\subsection{The Global Case} We now extend Proposition \ref{prop:localEpsRemoval} to the global setting and show that for all $p > p^{\ast}$ and $q = q(p) = \frac{4p}{n(p-2)}$,  $$ \bigg(\sum_{\gamma \in \Z} \| e^{it\Delta}P_{\leq N}f \|^{q}_{L^{p} (\R^{n} \times \T^{d} \times [\gamma -1, \gamma +1]) }\bigg)^{1/q}  \lesssim N^{\frac{n+d}{2} - \frac{n + d + 2}{p}}\|f\|_{L^{2}(\R^{n} \times \T^{d})}.$$ This will complete the proof of Theorem \ref{mainThm}. 

The argument is an adaptation of the bilinear interpolation approach of Keel and Tao \cite{KT}, and is also inspired by the $TT^{\ast}$ argument of Hani and Pausader \cite{HP}. We will use the fact that we already have both the local estimate without $\epsilon$-loss, and the global estimate for $q = q(p)$ with $\epsilon$ loss. We assume below that $p > p^{\ast}  = \frac{2(n+d+2)}{n+d}$ is fixed. We are free to take $p$ as close to $p^{\ast}$ as needed, since the remaining cases can be handled by interpolation. 

\subsubsection{Initial Reductions} Let $U_{\alpha}(t) = e^{-i(t+\alpha)\Delta_{\R^{n}\times \T^{d}}}$, and for functions $h(x,y,t)$ let $h_{\alpha}(x,y,t) = h(x,y, t + \alpha)$. By appealing to duality, we see that if $T(h,g)$ is the bilinear form \begin{equation}\label{eq:bilinear} T(h,g) = \sum_{\alpha \in \Z}\sum_{\gamma \in \Z} \int_{0}^{1}\int_{0}^{1} \langle U_{\alpha}(s)P_{\leq N}h_{\alpha}(s), U_{\alpha+\gamma}(t)P_{\leq N} g_{\alpha+\gamma}(t) \rangle_{L_{x,y}^{2}} dsdt
\end{equation}

\noindent then it suffices  to show that for $p > \frac{2(n+d+2)}{n+d}$ and $q = q(p) = \frac{4p}{n(p-2)}$ we have \begin{equation}\label{eq:formEst} |T(h,g)| \lesssim N^{2\alpha_{p}} \|h\|_{\ell_{\gamma}^{q'} L^{p'}(\R^{n} \times \T^{d}\times [\gamma, \gamma +1]) } \|g\|_{\ell_{\gamma}^{q'} L^{p'}(\R^{n} \times \T^{d}\times [\gamma, \gamma +1])}.
\end{equation}

\noindent  Note that we can immediately prove \eqref{eq:formEst} for the diagonal portion of $T$ where $|\gamma|\leq 10$ by using Cauchy-Schwarz and Proposition \ref{prop:localEpsRemoval} (in fact this argument gives a stronger estimate with an $\ell^{2}$ sum). Hence we can assume in \eqref{eq:bilinear} that $|\gamma| \geq 10$. We can also assume that $h = P_{\leq N}h$ and $g = P_{\leq N} g$.

We dyadically decompose $T$ and for $j \geq 3$ define $$T_{j}(h,g) = \sum_{\alpha \in \Z} \sum_{\substack{\gamma \in \Z \\ 2^{j} \leq |\gamma| < 2^{j+1} } }  \int_{0}^{1}\int_{0}^{1} \langle U_{\alpha}(s)h_{\alpha}(s), U_{\alpha+\gamma}(t)g_{\alpha+\gamma}(t) \rangle_{L_{x,y}^{2}} dsdt,$$ with the goal of showing that $$\sum_{j \geq 3} |T_{j}(h,g)| \lesssim  N^{2\alpha_{p}} \|h\|_{\ell_{\gamma}^{q'} L^{p'}(\R^{n} \times \T^{d}\times [\gamma, \gamma +1]) } \|g\|_{\ell_{\gamma}^{q'} L^{p'}(\R^{n} \times \T^{d}\times [\gamma, \gamma +1])}.$$

We claim that we can assume $h_{\alpha}(s)$ is zero for $\alpha$ outside an interval of length $2^{j}$. Indeed, let $I_{l} = \{\alpha: l2^{j} \leq \alpha < (l+1)2^{j} \} $ and suppose that for some pair of exponents $(a,b)$ we have $$\sup_{l \in \Z} |T_{j}(h\mathbbm{1}_{I_{l}} , g )| \leq A \|h\|_{\ell^{q'} L^{a'}} \|g\|_{\ell^{q'} L^{b'}}.$$ Note that for fixed $l$ the terms in $T_{j}(h,g)$ are zero unless $\alpha+ \gamma \in 2I_{l+1}$. Then  
\begin{align*}|T_{j}(h,g)| &\leq A\sum_{l} \|h \mathbbm{1}_{I_{l}}\|_{\ell^{q'}L^{a'} }  \|g \mathbbm{1}_{2I_{l+1}}\|_{\ell^{q'}L^{b'} } \\ & \leq A \big( \sum_{l} \|h \mathbbm{1}_{I_{l}}\|_{\ell^{q'}L^{a'} }^{q'} \big)^{\frac{1}{q'}} \big( \sum_{l} \|g \mathbbm{1}_{2I_{l+1}}\|_{\ell^{q'}L^{b'} }^{q} \big)^{\frac{1}{q}} \\ &\leq A \big( \sum_{l} \|h \mathbbm{1}_{I_{l}}\|_{\ell^{q'}L^{a'} }^{q'} \big)^{\frac{1}{q'}} \big( \sum_{l} \|g \mathbbm{1}_{2I_{l+1}}\|_{\ell^{q'}L^{b'} }^{q'} \big)^{\frac{1}{q'}} \\ &\leq cA \|h\|_{\ell^{q'} L^{a'}} \|g\|_{\ell^{q'} L^{b'}}, \end{align*} using the fact that $q > q'$ in the second-to-last inequality.   Hence it suffices to estimate $T_{j}(h,g)$ when $h_{\alpha}$ is supported with $\alpha \in I_{l}$ for some $l$, which additionally implies that we can assume $g$ is time-supported in an interval of $J$ length $\sim 2^{j}$. Note that with these assumptions $T_{j}(h,g) = T(h \mathbbm{1}_{I_{l}}, g \mathbbm{1}_{J} ),$ so we will be able to take advantage of the dual of Proposition \ref{mainThmEps}. 

\subsubsection{The First Interpolation} The first step is to prove the following two-parameter family of estimates. The result is similar to Lemma 4.1 in Keel-Tao \cite{KT}, though we have to interpolate in a smaller range to avoid too large of a loss in $N$.

\begin{lemma} \label{lemma:interpolation1} For all $(\frac{1}{a}, \frac{1}{b})$ in a neighborhood of $(\frac{1}{p}, \frac{1}{p})$ with $a,b > p^{\ast}$ we have $$ |T_{j}(h,g)| \lesssim_{\epsilon} N^{c(a,b) + \epsilon} 2^{j\beta(a,b)}\|h\|_{\ell^{q'}L^{a'}}\|g\|_{\ell^{q'}L^{b'}}, $$ where\begin{equation*}
	c(a,b)=\begin{cases}
	(1 - \frac{p^{\ast}}{a})d + \frac{n+d+2}{a} - \frac{n+d+2}{b}, & \text{if $a \leq b$}\\
	(1 - \frac{p^{\ast}}{b})d + \frac{n+d+2}{b} - \frac{n+d+2}{a}, & \text{if $a > b$}
	\end{cases}
	\end{equation*}and $$\beta(a,b) = \frac{n}{2a} + \frac{n}{2b} - \frac{n}{p}.$$
\end{lemma}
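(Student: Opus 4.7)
\noindent My plan is to establish two fundamental bilinear estimates for $T_j$ and then obtain the two-parameter family by bilinear complex interpolation, following the template of Lemma 4.1 of \cite{KT}. The starting point is the reformulation
\begin{equation*}
T_j(h,g) = \sum_{\alpha}\sum_{|\gamma|\sim 2^j}\int_0^1\!\!\int_0^1\langle h_\alpha(s),\,e^{-i(\gamma+t-s)\Delta}g_{\alpha+\gamma}(t)\rangle_{L^2_{x,y}}\,ds\,dt,
\end{equation*}
so that each summand involves the Schr\"{o}dinger propagator evaluated at a time-lag $\tau=\gamma+t-s$ of size $\sim 2^j$.

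The first estimate is the dispersive bound. For $a\in[2,\infty]$, I will apply H\"{o}lder in $(x,y)$ with dual exponents $(a',a)$ together with the pointwise-in-time $L^{a'}\to L^a$ bound extracted from the proof of Lemma~\ref{dispLemma} (with the worst case $R\leq N^d$), namely $\|e^{-i\tau\Delta}P_{\leq N}f\|_{L^a_{x,y}}\lesssim N^{d(a-2)/a}|\tau|^{-\mu_a}\|f\|_{L^{a'}_{x,y}}$. Summing over the constrained pairs $(\alpha,\beta)$ and invoking H\"{o}lder on the $\ell^{q'}$-indices (using that $h$ is time-supported in an interval of length $2^j$) will produce the diagonal bound $|T_j(h,g)|\lesssim N^{d(a-2)/a}\,2^{j(2/q-\mu_a)}\|h\|_{\ell^{q'}L^{a'}}\|g\|_{\ell^{q'}L^{a'}}$. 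The second estimate uses the dual of Prop~\ref{prop:localEpsRemoval}: setting $H_\alpha:=\int_0^1 U_\alpha(s)h_\alpha(s)\,ds$ and likewise $G_\beta$, one has $\|H_\alpha\|_{L^2_{x,y}}\lesssim N^{\alpha_a}\|h_\alpha\|_{L^{a'}_{s,x,y}}$ for any $a>p^*$, and since $T_j(h,g)=\sum_{|\alpha-\beta|\sim 2^j}\langle H_\alpha,G_\beta\rangle$, Cauchy--Schwarz in $L^2_{x,y}$ together with H\"{o}lder on the pair $(\alpha,\beta)$ will give $|T_j(h,g)|\lesssim N^{\alpha_a+\alpha_b}\,2^{2j/q}\|h\|_{\ell^{q'}L^{a'}}\|g\|_{\ell^{q'}L^{b'}}$, with the two exponents $a,b$ varying independently. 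Finally, I will bilinearly interpolate these two families via Riesz--Thorin, allowing different exponents on the two factors of the pairing in order to reach the off-diagonal case. The algebra of the exponents should produce $c(a,b)$ and $\beta(a,b)$ as stated, with the asymmetric case split in $c$ reflecting which of the two factors is treated via dispersive in $(x,y)$ and which via Bernstein-type conversion between $L^{a'}$ and $L^{b'}$ on functions with Fourier support of size $\lesssim N$.

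The main obstacle will be calibrating the interpolation so that the improvement $c(a,a)=d(1-p^*/a)$ actually emerges rather than the weaker $d(a-2)/a$ that the pure dispersive bound yields directly. This gain should arise from pushing the interpolation through the local Strichartz estimate at the endpoint $a=p^*$ (Prop~\ref{localStrichartz}), where $\alpha_{p^*}=0$ gives a loss-free bound on the $L^2$-integrated side at the cost of the smaller integrability $L^{(p^*)'}$; the $N^\epsilon$ in the statement precisely absorbs the $N^\epsilon$ loss intrinsic to the $\ell^2$-decoupling that underlies the $p^*$-endpoint Strichartz. A secondary bookkeeping issue will be matching the $j$-weight $2^{2j/q}$ from H\"{o}lder on the $\ell^{q'}$-supports with the dispersive decay $2^{-j\mu_a}$ so that their interpolation produces exactly $2^{j\beta(a,b)}$, which amounts to verifying the identity $2/q-\mu_a=\beta(a,a)$ at the diagonal.
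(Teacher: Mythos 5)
Your two base estimates are set up in a way that is close to the paper's but there is a concrete error in the second one which the interpolation cannot repair.

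Your dispersive family is correct: the kernel bound plus Hölder over the $\sim 2^j$ values of $\alpha$ and the $\sim 2^j$ values of $\gamma$ gives $2^{j(2/q-\mu_a)}$, and since $\mu_p=2/q$ the identity $2/q-\mu_a=\beta(a,a)$ does hold, so this family has exactly the target $j$-exponent. The problem is the dual-Strichartz family. You apply the \emph{local} dual estimate to each $H_\alpha=\int_0^1 U_\alpha(s)h_\alpha(s)\,ds$ separately, then do Cauchy--Schwarz per pair $(\alpha,\beta)$, then sum crudely. This produces $2^{2j/q}$; but the bound you actually need at the point $(p^*,b)$ has $j$-exponent $\beta(p^*,b)=\frac{2}{q}-\frac{1}{q(p^*)}-\frac{1}{q(b)}$, which is strictly smaller than $2/q$ by the fixed amount $\frac{1}{q(p^*)}+\frac{1}{q(b)}$. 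Interpolating your two families then gives $\theta\beta(\infty,\infty)+(1-\theta)\cdot\frac{2}{q}$, which exceeds $\beta(a,b)$ by $(1-\theta)\bigl(\frac{2}{q}-\beta(a_1,b_1)\bigr)$; near $(p,p)$ this deficit is $\approx n\bigl(\frac{1}{2}-\frac{1}{p^*}\bigr)>0$, a fixed positive constant. The second interpolation stage of the proof requires $\beta(a,b)$ to sweep through a small interval around $0$ so the $j$-sum converges geometrically, and the fixed positive excess destroys that.

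The source of the loss is that estimating $\bigl|\sum_{\alpha,\gamma}\langle H_\alpha,G_{\alpha+\gamma}\rangle\bigr|\le\sum_{\alpha,\gamma}\|H_\alpha\|_{L^2}\|G_{\alpha+\gamma}\|_{L^2}$ throws away the orthogonality between the $H_\alpha$'s. The paper instead first collapses the $\alpha$-sum and $s$-integral into a single Duhamel integral over an interval $I$ of length $\sim 2^j$ (and likewise for $g$), does one Cauchy--Schwarz, and only then applies the \emph{global} dual Strichartz estimate from Proposition \ref{mainThmEps}, i.e.\ the dual of the $\ell^{q(p^*)'}L^{(p^*)'}\to L^2$ bound. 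It is precisely the $\ell^{q(p^*)'}$ time-layer structure of that global estimate (followed by H\"{o}lder from $\ell^{q(p^*)'}$ to $\ell^{q'}$ on an interval of $\sim 2^j$ layers) that produces $2^{j(1/q-1/q(p^*))}$ rather than $2^{j/q}$ per factor, giving the correct $\beta(p^*,b)$. So the fix is: do not decouple $\alpha$ by Cauchy--Schwarz before using Strichartz; instead use $\sum_\alpha H_\alpha=\int_I e^{-is\Delta}h(s)\,ds$ and Proposition \ref{mainThmEps} at the endpoint $a=p^*$ (with $N^\epsilon$ loss), and likewise on the $g$-side with exponent $b$. Also note that you cannot keep both exponents of your second estimate strictly above $p^*$: the power $\alpha_a+\alpha_b$ exceeds $c(a,b)$ by $n(1-p^*/\min(a,b))$, so one of the two must be pinned at $p^*$ exactly (which requires the $\epsilon$-loss endpoint), matching the paper's choice of the two boundary segments $\{p^*\}\times(p^*,p)$ and $(p^*,p)\times\{p^*\}$ together with the point $(\infty,\infty)$.
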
 

\begin{proof} We begin by proving the lemma in the case $(a,b) = (\infty,\infty)$ and then in the two symmetric cases where $a = r$ and $r< b < p$, and where $b = r$ and $r < a < p$.  The full range of estimates is then obtained by interpolating between these cases. Recall from above that we can assume $h$ and $g$ have time support in an interval of length $\sim 2^{j}$. 
	
First consider $(a,b) = (\infty, \infty)$. Since $|\gamma| \geq 10$ in the definition of $T_{j}$ we can use kernel estimates as in Lemma \ref{dispLemma} to get $$|T_{j}(h,g)| \lesssim N^{d} \sum_{\alpha,\gamma} |\gamma|^{-n/2} \|h_{\alpha}\|_{L^{1}} \|g_{\alpha+\gamma}\|_{L^{1}}.$$ Letting $\mu = \frac{n(p-2)}{2p}$ and using the fact that $|\gamma|\sim 2^{j}$, this implies $$|T_{j}(h,g)| \lesssim N^{d} 2^{-j\frac{n}{p}} \sum_{\alpha, \gamma} |\gamma|^{-\mu} \|h_{\alpha}\|_{L^{1}} \|g_{\alpha+\gamma}\|_{L^{1}},$$ and then by the discrete Hardy-Littlewood-Sobolev inequality \begin{equation}\label{interpInfty} |T_{j}(h,g)| \lesssim N^{d}2^{-j\frac{n}{p}}\|h\|_{\ell^{q'}L^{1}}\|g\|_{\ell^{q'}L^{1}}.\end{equation} This proves the lemma when $(a,b) = (\infty,\infty)$.
	
	Next, suppose that $a = p^{\ast}$ and $p^{\ast} < b < p$. By bringing the sums and time integrals into the inner product in $T_{j}$ and then applying Cauchy-Schwarz we get $$|T_{j}(h,g)| \lesssim \sup_{I,I'} \big\| \int_{\R} e^{-it\Delta} (h(t)\mathbbm{1}_{I}) dt  \big\|_{L^{2}_{x,y}}\big\| \int_{\R} e^{-it\Delta} (g(t)\mathbbm{1}_{I'}) dt  \big\|_{L^{2}_{x,y}},$$ where $I,I'$ are time intervals of length $\sim 2^{j}$. Applying the dual form of Theorem \ref{mainThm} and using H\"{o}lder's inequality in time (taking into account the fact that both functions have bounded time support) yields the following: $$|T_{j}(h,g)| \lesssim_{\epsilon} N^{\alpha_{b} + \epsilon} 2^{j ( \frac{1}{q(p^{\ast})'} - \frac{1}{q'} ) } 2^{j ( \frac{1}{q(b)'} - \frac{1}{q'} )} \|h\|_{\ell^{q'}L^{(p^{\ast})'} } \|g\|_{\ell^{q'}L^{b'}}.$$ Here we are using the fact that if $s < p$ then $q(s) > q(p)$, hence $q(s)' < q(p)'$. Now $$\alpha_{b} = \frac{n+d}{2} - \frac{n + d + 2}{b} = \frac{n+d+2}{p^{\ast}} - \frac{n+d+2}{b},$$  so this gives the desired power of $N$. Moreover, we have \begin{align*}\big(\frac{1}{q(p^{\ast})'} - \frac{1}{q'}\big) + \big(\frac{1}{q(b)'} - \frac{1}{q'}\big)  &= \frac{2}{q} - \frac{1}{q(p^{\ast})} - \frac{1}{q(b)}.  \end{align*} It is easy to check using the definition of $q(s)$ that the last expression simplifies to $\beta(p^{\ast},b) = \beta(a,b)$, giving the desired power of $2^{j}$ as well. The proof of the symmetric case $b = p^{\ast}$ and $p^{\ast} < a < p$ is essentially the same. 
\end{proof}

\subsubsection{The Second Interpolation} The fact that we have a two-parameter family of estimates in a neighborhood of $(\frac{1}{p}, \frac{1}{p})$ in Lemma \ref{lemma:interpolation1} will allow us to sum in $j$, but we first have to suitably decompose the input functions $h,g$. 

In particular we will use the atomic decomposition due to Keel and Tao.  

\begin{lemma}[\cite{KT}] Let $(X, \mu)$ be a measure space and $0 < p < \infty$. Then any $f \in L^{p}(X)$ can be written as $f = \sum_{k \in \Z} c_k \chi_{k}$ where each $\chi_{k}$ is a function bounded by $O(2^{-k/p})$ and supported on a set of measure $O(2^{k})$, and the $c_{k}$ are non-negative constants such that $\|c_{k}\|_{\ell^{p}} \lesssim \|f\|_{L^{p}}.$
\end{lemma}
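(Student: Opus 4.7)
The plan is to perform a standard dyadic layer-cake decomposition of $f$ based on its pointwise magnitude. By homogeneity assume $\|f\|_{L^p}=1$. Define the disjoint level sets
\[A_k := \{x\in X : 2^{-(k+1)/p} < |f(x)| \leq 2^{-k/p}\}, \qquad k\in\Z,\]
which partition $\{x : f(x)\neq 0\}$, so that $f=\sum_k f\mathbbm{1}_{A_k}$. Applying Chebyshev's inequality to the lower bound on $|f|$ over $A_k$ gives
\[2^{-(k+1)}|A_k| \leq \int_{A_k}|f|^p \leq 1,\]
so $|A_k|\leq 2^{k+1}$, matching the required support bound.

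The candidate atoms are $\chi_k:=f\mathbbm{1}_{A_k}$, which automatically satisfy $|\chi_k|\leq 2^{-k/p}$ from the definition of $A_k$ and $|\text{supp}(\chi_k)|\lesssim 2^k$ from the Chebyshev estimate above. To extract coefficients in $\ell^p$, one sets $c_k:=\|f\mathbbm{1}_{A_k}\|_{L^p}$ and replaces $\chi_k$ by its $L^p$-normalization $c_k^{-1}f\mathbbm{1}_{A_k}$ whenever $c_k>0$. Then $f=\sum_k c_k\chi_k$, and the disjointness of the $A_k$'s yields the key identity
\[\sum_k c_k^p = \sum_k\int_{A_k}|f|^p = \|f\|_{L^p}^p,\]
which is the desired $\ell^p$ bound on $(c_k)$, with equality.

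The expected obstacle is that the renormalized atom $c_k^{-1}f\mathbbm{1}_{A_k}$ may exceed $O(2^{-k/p})$ in sup norm when $c_k$ is small. This is handled by relabelling: when $|A_k|\sim 2^k$ one has $c_k\sim 1$ and the rescaled atom already satisfies both bounds, while if $|A_k|\ll 2^k$ one exploits the lower bound $c_k\geq 2^{-(k+1)/p}|A_k|^{1/p}$ to reassign the atom to a finer level $k'$ at which the $L^\infty$ and support bounds both hold with constants depending only on $p$. Grouping the boundedly many atoms that land at the same output scale preserves the $\ell^p$-identity and produces the decomposition in the stated form. This final bookkeeping step is routine but is the main technical point of the proof.
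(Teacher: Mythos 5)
Your decomposition by magnitude level sets $A_k=\{2^{-(k+1)/p}<|f|\le 2^{-k/p}\}$ is a natural start, but the renormalize--relabel--group step does not close, and the claim that ``boundedly many atoms land at the same output scale'' is false. Take $f=\sum_{j\ge 0}2^{-j/p}\mathbbm{1}_{[j,j+1)}$ on $[0,\infty)$: every $A_j=[j,j+1)$ has measure $1$, every renormalized atom $c_j^{-1}f\mathbbm{1}_{A_j}=\mathbbm{1}_{[j,j+1)}$ is a level-$0$ atom, so infinitely many atoms land at $k'=0$, and grouping them destroys the support bound (their disjoint union is all of $[0,\infty)$). The difficulty is structural: Chebyshev gives only the one-sided bound $|A_k|\lesssim 2^k$, so the magnitude scale (where $|f|\sim 2^{-k/p}$) and the measure scale (where $|\text{supp}|\sim 2^k$) are genuinely decoupled, and they agree only when $f\mathbbm{1}_{A_k}$ is already roughly $L^p$-normalized.

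The paper does not reproduce the proof (it cites \cite{KT}), but the standard argument cuts $f$ according to the \emph{measure} of its super-level sets rather than its magnitude, and this is what makes the $\ell^p$ bound come out. Let $f^*$ be the non-increasing rearrangement and set $\lambda_k:=f^*(2^k)$, so $|\{|f|>\lambda_k\}|\le 2^k$ by definition. Put
\[
g_k:=\mathrm{sgn}(f)\big(\min(|f|,\lambda_{k-1})-\min(|f|,\lambda_k)\big),\qquad
\chi_k:=\frac{2^{-k/p}\,g_k}{\lambda_{k-1}-\lambda_k},\qquad
c_k:=2^{k/p}(\lambda_{k-1}-\lambda_k),
\]
with $\chi_k:=0$ when $\lambda_{k-1}=\lambda_k$. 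Then $|\chi_k|\le 2^{-k/p}$ pointwise, $\mathrm{supp}(\chi_k)\subset\{|f|>\lambda_k\}$ has measure at most $2^k$, the sum $\sum_k c_k\chi_k=\sum_k g_k$ telescopes to $f$, and since $f^*$ is non-increasing,
\[
\sum_k c_k^p\le\sum_k 2^k\lambda_{k-1}^p=2\sum_m 2^m f^*(2^m)^p\lesssim\int_0^\infty (f^*)^p\,dt=\|f\|_{L^p}^p.
\]
Your level-set idea and the Chebyshev estimate are sound, but to obtain the stated lemma the atoms must be indexed by the size of the super-level set, not by the size of $f$.
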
 

\noindent We apply the lemma on $L^{p'}(\R^{n} \times \T^{d} \times [\gamma, \gamma +1])$ for each $\gamma$. This allows us to write $$h(x,y,t)\mathbbm{1}_{[\gamma, \gamma+1]}(t) = \sum_{k \in \Z } h^{\gamma}_{k} \chi^{\gamma}_{k}(x,y,t)\mathbbm{1}_{[\gamma, \gamma+1]}(t) $$ with $\chi^{\gamma}_{k}$ supported on a set of measure $O(2^{k})$ and $|\chi^{\gamma}_{k}| \lesssim 2^{-k/p'}$ for each $k$, such that $$\big(\sum_{k}  |h^{\gamma}_{k}|^{p'} \big)^{1/p'} \lesssim \|h\|_{L^{p'}(\R^{n} \times \T^{d} \times [\gamma, \gamma +1])}$$ and hence \begin{equation}\label{eq:hLayer}\big\|\big(\sum_{k}  |h^{\gamma}_{k}|^{p'} \big)^{1/p'} \big\|_{\ell_{\gamma}^{q'}} \lesssim \|h\|_{\ell_{\gamma}^{q'}L^{p'}(\R^{n} \times \T^{d} \times [\gamma, \gamma +1])}.\end{equation} Likewise we can decompose $$g(x,y,t)\mathbbm{1}_{[\gamma, \gamma+1]}(t) = \sum_{m \in \Z } g^{\gamma}_{m} \varphi^{\gamma}_{m}(x,y,t)\mathbbm{1}_{[\gamma, \gamma+1]}(t) $$ with $\varphi^{\gamma}_{m}$ supported on a set of measure $O(2^{m})$ and $|\varphi^{\gamma}_{m}| \lesssim 2^{-m/p'}$ for each $m$, such that \begin{equation}\label{eq:gLayer}\big\|\big(\sum_{m}  |g^{\gamma}_{m}|^{p'} \big)^{1/p'} \big\|_{\ell_{\gamma}^{q'}} \lesssim \|g\|_{\ell_{\gamma}^{q'}L^{p'}(\R^{n} \times \T^{d}\times [\gamma, \gamma +1])}.\end{equation}

Define $h_{k} \chi_{k}$ such that  $h_{k}\chi_{k} = h^{\gamma}_{k} \chi^{\gamma}_{k}$ when $t \in [\gamma , \gamma +1)$, and similarly define $g_{m} \varphi_{m}$. Then we have \begin{equation} \label{decomp1} \sum_{j \geq 3} |T_{j}(h,g)| \leq \sum_{j \geq 3} \sum_{k \in \Z} \sum_{m \in \Z} |T_{j}(h_{k} \chi_{k}, g_{m} \varphi_{m} )|.\end{equation} 

Fix one such pair $(k,m)$. Then for all $(\frac{1}{a}, \frac{1}{b})$ in a neighborhood of $(\frac{1}{p}, \frac{1}{p})$  Lemma \ref{lemma:interpolation1} implies \begin{align}\nonumber |T_{j}(h_{k}\chi_{k}, g_{m}\varphi_{m})| &\lesssim_{\epsilon} 2^{j\beta(a,b)} N^{c(a,b) + \epsilon} \|\chi_{k} h_{k}\|_{\ell_{\gamma}^{q'}L^{a'}} \|\varphi_{m} g_{m}\|_{\ell_{\gamma}^{q'}L^{b'}} \\ \nonumber &\lesssim_{\epsilon} 2^{j\beta(a,b)} N^{c(a,b) + \epsilon} 2^{-\frac{k}{p'}} 2^{-\frac{m}{p'}} 2^{\frac{k}{a'}}2^{\frac{m}{b'}} \|h^{\gamma}_{k}\|_{\ell_{\gamma}^{q'}} \|g^{\gamma}_{m}\|_{\ell_{\gamma}^{q'}} \\ \label{decomp2} &\lesssim_{\epsilon} N^{c(a,b) + \epsilon} 2^{(j-\frac{2}{n}k)( \frac{n}{2a} - \frac{n}{2p})}2^{(j-\frac{2}{n}m)(\frac{n}{2b} - \frac{n}{2p})} \|h^{\gamma}_{k}\|_{\ell_{\gamma}^{q'}} \|g^{\gamma}_{m}\|_{\ell_{\gamma}^{q'}} \end{align} (with implicit constants depending on $\epsilon, a, b, p$). We now optimize in $a$ and $b$ to get \begin{equation}\label{eq:interpK}2^{(j-\frac{2}{n}k)( \frac{n}{2a} - \frac{n}{2p})}  = 2^{-\eta|j-\frac{2}{n}k|}   \end{equation} \begin{equation}\label{eq:interpM}2^{(j-\frac{2}{n}m)( \frac{n}{2b} - \frac{n}{2p})}  = 2^{-\eta|j-\frac{2}{n}m|}  \end{equation} for some uniform $\eta > 0$. We explain how to control the power of $N$ appearing in \eqref{decomp2} in the following lemma.

\begin{lemma}\label{lem:Npower} If $p$ is close enough to $p^{\ast}$ and $(a,b)$ is in a small ball around $(p,p)$ then  \begin{equation}\label{eq:interpNterm} c(a,b) < 2 \alpha_{p}. \end{equation} Moreover, the size of the ball only depends on $n,d$. \end{lemma}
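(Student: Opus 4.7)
The plan is to verify the inequality directly at the center $(a,b) = (p,p)$ and then extend to a neighborhood by a Lipschitz/continuity argument. First I would substitute $a = b = p$ into the definition of $c$: in either branch of the piecewise definition this yields $c(p,p) = d(1 - p^{\ast}/p)$. Using $p^{\ast} = 2(n+d+2)/(n+d)$, we also have $2\alpha_p = (n+d) - 2(n+d+2)/p = (n+d)(1 - p^{\ast}/p)$. Hence
\[
2\alpha_p - c(p,p) \;=\; n\Bigl(1 - \tfrac{p^{\ast}}{p}\Bigr),
\]
which is strictly positive whenever $p > p^{\ast}$. This is the entire reason the inequality is available: there is a positive gap at the center point, proportional to $1 - p^{\ast}/p$.

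For the extension to a neighborhood, I would work in the coordinates $u = 1/a$, $v = 1/b$. In the region $u \geq v$ (that is, $a \leq b$), a direct expansion using $(n+d+2) - dp^{\ast} = (n+d+2)(n-d)/(n+d)$ gives
\[
c(a,b) \;=\; d + (n+d+2)\cdot \tfrac{n-d}{n+d}\, u \;-\; (n+d+2)\, v,
\]
and the formula in the region $u \leq v$ is obtained by swapping $u$ and $v$, since $c(a,b) = c(b,a)$. Thus $c$ is piecewise affine and Lipschitz in $(u,v)$ with Lipschitz constant bounded by a dimensional constant $C_{n,d}$ (at worst $n+d+2$). Consequently, if $|1/a - 1/p|, |1/b - 1/p| < \rho$, then $|c(a,b) - c(p,p)| \leq 2C_{n,d}\, \rho$.

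Choosing $\rho < n(1 - p^{\ast}/p)/(2C_{n,d})$ therefore forces $c(a,b) < c(p,p) + n(1 - p^{\ast}/p) = 2\alpha_p$, which is \eqref{eq:interpNterm}. The ratio between the admissible radius $\rho$ and the scalar gap $1 - p^{\ast}/p$ depends only on $n,d$, which I take to be the precise content of the final clause of the lemma (once $p$ is fixed close to $p^{\ast}$, the relevant ball of $(1/a,1/b)$-values is determined by a purely dimensional constant times $p - p^{\ast}$). There is no real obstacle here; the only care needed is the bookkeeping for the two cases $a \leq b$ and $a > b$, which collapses to one computation by the symmetry $c(a,b) = c(b,a)$.
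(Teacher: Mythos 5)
Your proof is correct and follows the same continuity-based argument as the paper: identify a positive gap $2\alpha_p - c(p,p) = n(1 - p^*/p)$ at the center point, then shrink the ball so the variation of $c$ stays below that gap. The only difference is that you pivot at $(p,p)$ and explicitly compute the piecewise-affine structure and Lipschitz constant of $c$ in the $(1/a,1/b)$ coordinates (giving a radius $\rho \sim (1 - p^*/p)$ with a dimensional proportionality constant), whereas the paper pivots at $(p^*,p^*)$ and invokes continuity more loosely; your version is a tightening, and it correctly identifies that the ``depends only on $n,d$'' clause is to be read after $p$ has been fixed as a function of $n,d$, since the radius necessarily shrinks linearly in $p - p^*$.
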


\begin{proof} Suppose without loss of generality that $a \leq b$. Then $c(a,b) < 2 \alpha_{p}$ is equivalent to the inequality \begin{equation}\label{eq:NpowerReduction} (1- \frac{p^{\ast}}{a})d + \frac{n+d+2}{a} - \frac{n+d+2}{b} + \frac{2(n+d+2)}{p} < n + d. \end{equation} Note that $\frac{1}{p} < \frac{1}{p^{\ast}}$, hence $$\frac{2(n+d+2)}{p} < \frac{2(n+d+2)}{p^{\ast}} = n + d .$$ The lemma will then follow if we can choose $a,b$ such that \begin{equation}\label{eq:NpowerReduction2} (1- \frac{p^{\ast}}{a})d + \frac{n+d+2}{a} - \frac{n+d+2}{b} < n+d - \frac{2(n+d+2)}{p}.\end{equation} This is possible for any $a,b$ close enough to $p$, provided $p$ has been chosen sufficiently close to $p^{\ast}$ (which we are always free to assume). In particular, if we set $a = b = p^{\ast}$ then the left-hand side of \eqref{eq:NpowerReduction2} is simply 0. But by continuity the strict inequality $\eqref{eq:NpowerReduction2}$ is preserved if we vary $a,b$ in a small neighborhood around $p$, as long as $p$ is close enough to $p^{\ast}$. This proves $\eqref{eq:NpowerReduction}$ for $(a,b)$ in some small neighborhood of $(p,p)$, and the size of the neighborhood clearly only depends on $p^{\ast}$ and hence only on $n,d$.    \end{proof}

By combining Lemma \ref{lem:Npower} with \eqref{eq:interpK} and \eqref{eq:interpM}, \begin{equation}\label{eq:formEstkm} |T_{j}(h_{k}\chi_{k}, g_{m}\varphi_{m})|  \lesssim N^{2\alpha_{p}} 2^{-\eta|j-\frac{2}{n}k|}2^{-\eta|j-\frac{2}{n}m|}\|h^{\gamma}_{k}\|_{\ell_{\gamma}^{q'}} \|g^{\gamma}_{m}\|_{\ell_{\gamma}^{q'}}. \end{equation} Indeed, by Lemma \ref{lem:Npower} we can choose $\epsilon$ small enough (depending only on $p,n,d$ and the choice of $a,b$) such that $c(a,b) + \epsilon \leq 2\alpha(p).$ 

Summing in $j$ then yields \begin{align*} \sum_{k}\sum_{m}\sum_{j \geq 3} |T_{j}(h_{k}\chi_{k}, g_{m}\varphi_{m})|  &\lesssim N^{2\alpha_{p}} \sum_{k}\sum_{m} \sum_{j \geq 3 }2^{-\eta|j-\frac{2}{n}k|}2^{-\eta|j-\frac{2}{n}m|}\|h^{\gamma}_{k}\|_{\ell_{\gamma}^{q'}} \|g^{\gamma}_{m}\|_{\ell_{\gamma}^{q'}} \\ &\lesssim N^{2\alpha_{p}}\sum_{k} \sum_{m} (1 + \frac{2}{n}|k-m|)2^{-\frac{2}{n}\eta|k-m|} \|h^{\gamma}_{k}\|_{\ell_{\gamma}^{q'}} \|g^{\gamma}_{m}\|_{\ell_{\gamma}^{q'}}. \end{align*} The right-hand side of this bound is of the form $$\sum_{m,k} f(m-k) \big( \sum_{\gamma} |h^{\gamma}_{k}|^{q'} \big)^{1/q'}\big( \sum_{\gamma} |g^{\gamma}_{m}|^{q'} \big)^{1/q'} := \sum_{m,k} f(m-k)c_{k}d_{m}$$ where $f(m)$ is summable in $m$. By H\"{o}lder's and Young's inequality we can control this by $C_{f}\|c\|_{\ell^{p'}} \|d\|_{\ell^{p}}$. Now $p' > q'$ so by Minkowski's inequality $$\|c\|_{\ell^{p'}} = \big( \sum_{k} \big( \sum_{\gamma} |h^{\gamma}_{k}|^{q'} \big)^{\frac{p'}{q'}} \big)^{\frac{1}{p'}} \leq  \big( \sum_{\gamma} \big( \sum_{k} |h^{\gamma}_{k}|^{p'} \big)^{\frac{q'}{p'}} \big)^{\frac{1}{q'}},$$ which we can control by $\|h\|_{\ell^{q'}L^{p'}}$ using \eqref{eq:hLayer}. Also note $\|d\|_{\ell^{p}} \leq \|d\|_{\ell^{p'}}$ since $p > p'$, so the same argument (using \eqref{eq:gLayer}) works to give an acceptable bound for this term as well. We ultimately get $$\sum_{k}\sum_{m}\sum_{j \geq 3} |T_{j}(h_{k}\chi_{k}, g_{m}\varphi_{m})| \lesssim N^{2\alpha_{p}} \|h\|_{\ell^{q'}L^{p'}} \|g\|_{\ell^{q'}L^{p'}},$$ as desired.

\subsection{Proof of Corollary \ref{cor:mixedDeriv}} We now explain how Corollary \ref{cor:mixedDeriv} follows from Theorem \ref{mainThm}, following some observations of Hani and Pausader (see Remark 3.5 in \cite{HP}). The main refinement comes from the following lemma, which is an extension of Corollary 3.4 in \cite{HTT} to our setting.  

\begin{lemma}\label{lem:localRefinement} Suppose $1 \leq M \leq N$ and let $P_{\leq M}^{x}$ denote an $n$-dimensional Littlewood-Paley cut-off to scale $M$ in the $x$ frequency. Then for any $p > \frac{2(n+ d+ 2)}{n+d}$ there is $\delta > 0$ such that $$ \|P_{\leq N} P_{\leq M}^{x} e^{it\Delta_{\R^{n} \times \T^{d} }} u_{0} \|_{ L^{p} (\R^{n} \times \T^{d} \times [-1, 1]) } \lesssim \bigg( \frac{M}{N} \bigg)^{\delta} N^{\frac{n+d}{2} - \frac{n + d + 2}{p}}\|u_{0}\|_{L^{2}(\R^{n} \times \T^{d})}.$$ In particular one can take $0 < \delta < \frac{n}{2}(1- \frac{p^{\ast}}{p})$ with $p^{\ast} = \frac{2(n+d+2)}{n+d}$. 
\end{lemma}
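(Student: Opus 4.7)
The plan is to deduce the lemma by interpolating between a trivial $L^{\infty}$ bound coming from Bernstein's inequality (which is where the gain in $M/N$ originates) and the no-loss local Strichartz estimate of Proposition~\ref{prop:localEpsRemoval} at an exponent $p_{0}$ chosen just above $p^{\ast}$.

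Let $f = P_{\leq N} P_{\leq M}^{x} u_{0}$, so that $\widehat{f}$ is supported in $\{|\xi| \lesssim M\} \times \{|m| \lesssim N\} \subset \R^{n} \times \Z^{d}$. Since the Schr\"{o}dinger flow preserves spatial Fourier support, applying Bernstein's inequality first in $x$ and then in $y$ yields the uniform bound
\[
\|e^{it\Delta} f\|_{L^{\infty}(\R^{n} \times \T^{d} \times [-1,1])} \lesssim M^{n/2} N^{d/2} \|u_{0}\|_{L^{2}}.
\]
On the other hand, for any $p_{0} \in (p^{\ast}, p)$ (possible since $p > p^{\ast}$), Proposition~\ref{prop:localEpsRemoval} applied to $f$ gives the no-loss bound
\[
\|e^{it\Delta} f\|_{L^{p_{0}}(\R^{n} \times \T^{d} \times [-1,1])} \lesssim N^{\alpha_{p_{0}}} \|u_{0}\|_{L^{2}}, \qquad \alpha_{p_{0}} = \tfrac{n+d}{2} - \tfrac{n+d+2}{p_{0}}.
\]

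Setting $\theta = (p - p_{0})/p$, so that $\tfrac{1}{p} = \tfrac{1-\theta}{p_{0}} + \tfrac{\theta}{\infty}$, H\"{o}lder's inequality gives
\[
\|e^{it\Delta} f\|_{L^{p}} \leq \|e^{it\Delta} f\|_{L^{p_{0}}}^{1-\theta} \|e^{it\Delta} f\|_{L^{\infty}}^{\theta} \lesssim \bigl(M^{n/2} N^{d/2}\bigr)^{\theta} N^{\alpha_{p_{0}}(1-\theta)} \|u_{0}\|_{L^{2}}.
\]
Using $1 - \theta = p_{0}/p$ one checks by direct computation that the $M$-exponent equals $\delta := \tfrac{n\theta}{2} = \tfrac{n}{2}\bigl(1 - \tfrac{p_{0}}{p}\bigr)$, while the total $N$-exponent equals $\alpha_{p} - \delta$; the identity $\alpha_{p} = \tfrac{(n+d)\theta}{2} + \alpha_{p_{0}}(1-\theta)$ is immediate from the definition of $\alpha_{r}$. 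Letting $p_{0} \to (p^{\ast})^{+}$ lets $\delta$ approach $\tfrac{n}{2}\bigl(1 - \tfrac{p^{\ast}}{p}\bigr)$ from below, so every $\delta$ in the claimed range is realized (with an implicit constant depending on the chosen $p_{0}$, and hence on $\delta$).

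The main obstacle is having a no-loss local Strichartz estimate at some $p_{0}$ strictly above but arbitrarily close to $p^{\ast}$ — i.e., Proposition~\ref{prop:localEpsRemoval} rather than the $\epsilon$-loss Proposition~\ref{mainThmEps}. An extra $N^{\epsilon}$ factor at the $L^{p_{0}}$ endpoint would propagate through the interpolation and could not be absorbed into $(M/N)^{\delta}$ when $M$ is comparable to $N$, so the $\epsilon$-removal argument of Section~4 is genuinely needed before this refinement is available.
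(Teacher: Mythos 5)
Your proposal is correct and follows the same route as the paper: interpolate between the rectangle-Bernstein $L^\infty$ bound $M^{n/2}N^{d/2}\|u_0\|_{L^2}$ (coming from Cauchy--Schwarz and the Fourier support box of measure $\sim M^n N^d$) and the no-loss local Strichartz estimate of Proposition~\ref{prop:localEpsRemoval} at an intermediate exponent $p_0 \in (p^*, p)$. The paper only sketches this; your write-out of the exponent bookkeeping and of why the $\epsilon$-loss version would not suffice is accurate and fills in the details.
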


\noindent The lemma follows by interpolating between the estimate \begin{equation} \label{eq:rectEst} \|P_{\leq N} P_{\leq M}^{x} e^{it\Delta_{\R^{n} \times \T^{d} }} u_{0} \|_{ L^{\infty} (\R^{n} \times \T^{d} \times [-1, 1]) } \lesssim M^{\frac{n}{2}}N^{\frac{d}{2}} \|u_{0}\|_{L^{2}(\R^{n} \times \T^{d})}
\end{equation} and the local Strichartz estimate in Proposition \ref{prop:localEpsRemoval}. The estimate \eqref{eq:rectEst} follows from Cauchy-Schwarz and the observation that the Fourier transform of $P_{\leq N} P_{\leq M}^{x}  u_{0}$ is supported in a rectangle that has $\R^n \times \Z^d$ measure $\sim N^{n}M^{d}.$ 

We can now apply Lemma \ref{lem:localRefinement} to obtain a refinement in the estimate of the diagonal portion of the bilinear form \eqref{eq:bilinear}. For the estimate of the non-diagonal portion of the the form \eqref{eq:bilinear} recall that we actually obtained a bound with constant $N^{2\beta}$ and $2\beta < 2\alpha_{p}$. Corollary \ref{cor:mixedDeriv} follows.  

\section{Some Applications}

In this section we show how Theorem \ref{mainThm} can be used to prove Theorem \ref{nonlinearThm}. Below all mixed norms of the type $\ell^{q}L^{p}$ are defined as in Theorem \ref{mainThm}.

\subsection{Function Spaces} We will employ the atomic and variational spaces that have frequently been used to study well-posedness problems for dispersive equations (for examples see \cite{HHK}, \cite{HTT}, \cite{HP}, and \cite{KV}). We recall some basic definitions and properties and refer the reader to \cite{HHK} for proofs. 

Let $I \subset \R$ be a time interval. Given $1 \leq p < \infty$ and a Hilbert space $H$, a $U^{p}(I, H)$ atom $a$ is defined to be a function $a: I \rightarrow H$ such that $$a(t) = \sum_{k=1}^{K} \mathbbm{1}_{[t_{k-1}, t_{k})} \phi_{k-1}, \ \ \ \ \ \ \phi_{k} \in H$$ for some  partition $-\infty < t_0 < t_{1} < ... < t_{K} \leq \infty$, with the additional property that $\sum_{k=0}^{K-1}\|\phi_{k}\|^{p}_{H} = 1$. Then $U^{p}(I, H)$ is the Banach space of functions $u: I \rightarrow H$ with a decomposition of the form $$u = \sum_{j} \lambda_{j} a_{j},$$ where $\{ \lambda_{j} \} \in \ell^{1}(\mathbb{C})$ and $a_{j}$ are $U^{p}(I, H)$ atoms. The norm on $U^{p}$ is taken to be $$\|u\|_{U^{p}} = \inf \{ \sum_{j} |\lambda_{j}| : u = \sum_{j} \lambda_{j} a_{j} \text{ with } a_{j} \ U^{p}\text{-atoms} \}.$$  We also define the variational space $V^{p}(I, H)$ to be the Banach space of functions $v$ such that $$\|v\|_{V^{p}} := \sup_{\text{partitions } \{t_{k}\}} \bigg( \sum_{k=1}^{K} \|v(t_k) - v(t_{k-1})\|^{p}_{H} \bigg)^{\frac{1}{p}} < \infty.$$ We have the important duality relationship $$(U^{p})^{\ast} = V^{p'}.$$ 

We introduce two futher spaces $X^{s}$ and $Y^{s}$ which we will use to carry out the main iteration argument. First let $U_{\Delta}^{p}$ denote the space of functions such that $e^{-it\Delta}u \in U^{p}$, and similarly define $V_{\Delta}^{p}$. Let $\{C_{z}\}_{z \in \Z^{n+d}}$ be a tiling of frequency space by cubes of side-length $\sim 1$, and define $X_{0}^{s}(\R)$ and $Y^{s}(\R)$ to be the spaces of functions $u$ and $v$, respectively, such that the following norms are finite: 
$$ \|u\|_{X_0^{s} (\R) }^{2} := \sum_{z \in \mathbb{Z}^{n+d}} \langle z \rangle^{2s} \|P_{C_{z}}u\|_{ U_{\Delta}^{2} (\R, L^{2}(\R^{n}\times \T^{d} )) }^{2}$$ 
$$ \|v\|_{Y^{s} (\R) }^{2} := \sum_{z \in \mathbb{Z}^{n+d}} \langle z \rangle^{2s} \|P_{C_{z}}v\|_{ V_{\Delta}^{2} (\R, L^{2}(\R^{n}\times \T^{d} )) }^{2}.$$  
\noindent One can likewise define the time-restriction norms $X_{0}^{s}(I), Y^{s}(I)$ for $I \subset \R$ (see \cite{HP}). Recall that we have the sequence of embeddings $$ U_{\Delta}^{2}(\R, H^{s}) \hookrightarrow X_{0}^{s}(\R) \hookrightarrow Y^{s}(\R) \hookrightarrow V_{\Delta}^{2}(\R, H^{s}) \hookrightarrow U_{\Delta}^{p} (\R, H^{s}) \hookrightarrow L^{\infty}(\R, H^{s}) $$ for any $p > 2$. Finally, as in \cite{HP} we introduce the modified space $X^{s}$ defined to be $$X^{s}(\R) := \{u: \phi_{-\infty} := \lim_{t \rightarrow - \infty} e^{-it\Delta}u(t) \text{ exists in } H^{s}, \text{ and } u(t) - e^{it\Delta}\phi_{-\infty} \in X^{s}_{0}(\R)  \},$$ with the norm $$\|u\|^{2}_{X^{s}(\R)} := \|\phi_{-\infty}\|^{2}_{H^{s}(\R^{n} \times \T^{d} )} + \|u - e^{it\Delta}\phi_{-\infty}\|_{ X_{0}^{s}(\R) }^{2}.$$ One can also define time-restriction spaces $X^{s}(I)$ similarly to $X_{0}^{s}(I)$. 

\subsection{The Quintic Equation on $\R \times \T$} As a first step we can transfer our Strichartz estimates to a result about $U_{\Delta}^{p}$ spaces.

\begin{lemma}\label{lem:strichartzEmbed} Let $q,p$ be as in Theorem \ref{mainThm}. Let $C$ be a cube in frequency space $\R^{n} \times \Z^{d}$ with side-length $\sim N \geq 1$ and let $I \subset \R$ be a time interval. Then $$\|\mathbbm{1}_{I} \cdot P_{C}u\|_{\ell_{\gamma}^{q}L^{p}(\R^{n}\times \T^{d} \times [\gamma, \gamma +1] )} \lesssim N^{\frac{n+d}{2} - \frac{n+d+2}{p}} \|P_{C}u\|_{U_{\Delta}^{\min(p,q)}(I, L^{2} (\R^{n} \times \T^{d} ) ) } $$
\end{lemma}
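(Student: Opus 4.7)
The plan is a standard atomic-to-Strichartz transfer argument. Setting $p_0 := \min(p,q)$, any $P_C u$ in $U^{p_0}_\Delta$ decomposes as $\sum_j \lambda_j a_j$ into $U^{p_0}_\Delta$-atoms with $\sum_j |\lambda_j|$ arbitrarily close to $\|P_C u\|_{U^{p_0}_\Delta}$. Replacing each atom $a_j$ by $P_C a_j$ (which is still a $U^{p_0}$-atom up to a harmless constant, since $\sum_k \|P_C \phi_{k-1}\|_{L^2}^{p_0} \leq \sum_k \|\phi_{k-1}\|_{L^2}^{p_0}$) and applying the triangle inequality in $\ell^q_\gamma L^p$, it suffices to prove the bound
\[
\|\mathbbm{1}_I a\|_{\ell^q_\gamma L^p(\R^n \times \T^d \times [\gamma, \gamma+1])} \;\lesssim\; N^{\frac{n+d}{2} - \frac{n+d+2}{p}}
\]
uniformly over $U^{p_0}_\Delta$-atoms $a$ with $P_C a = a$.

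Next I would fix such an atom $a(t) = \sum_{k=1}^K \mathbbm{1}_{[t_{k-1},t_k)}(t)\, e^{it\Delta}\phi_{k-1}$ with $\sum_k\|\phi_{k-1}\|_{L^2}^{p_0} \leq 1$ and $P_C\phi_{k-1} = \phi_{k-1}$. Because the $K$ atomic pieces have pairwise disjoint time supports, the pointwise identity $|\sum_k g_k|^p = \sum_k |g_k|^p$ yields, for each $\gamma \in \Z$,
\[
\|\mathbbm{1}_I a\|_{L^p(\R^n \times \T^d \times [\gamma,\gamma+1])}^p \;=\; \sum_{k=1}^K A_{\gamma,k}, \qquad A_{\gamma,k} := \|\mathbbm{1}_{I \cap [t_{k-1},t_k) \cap [\gamma,\gamma+1]}\, e^{it\Delta}\phi_{k-1}\|_{L^p}^p.
\]
To convert the outer $\ell^{q/p}_\gamma$ summation into per-piece $\ell^q_\gamma L^p$ norms I would split into two cases. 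When $p \leq q$ one has $q/p \geq 1$, so Minkowski's inequality in $\ell^{q/p}_\gamma$ gives
\[
\|\mathbbm{1}_I a\|_{\ell^q_\gamma L^p}^{p} \;\leq\; \sum_k \|\mathbbm{1}_{[t_{k-1},t_k)}\, e^{it\Delta}\phi_{k-1}\|_{\ell^q_\gamma L^p}^{p}.
\]
When $p > q$ one has $q/p < 1$, and the elementary subadditivity $(\sum x_k)^{q/p} \leq \sum x_k^{q/p}$ instead produces
\[
\|\mathbbm{1}_I a\|_{\ell^q_\gamma L^p}^{q} \;\leq\; \sum_k \|\mathbbm{1}_{[t_{k-1},t_k)}\, e^{it\Delta}\phi_{k-1}\|_{\ell^q_\gamma L^p}^{q}.
\]
In both cases the resulting inequality has $\|\mathbbm{1}_I a\|_{\ell^q_\gamma L^p}^{p_0}$ on the left and an $\ell^1_k$ sum of per-piece $\ell^q_\gamma L^p$ norms to the $p_0$-th power on the right, matched precisely to the atom normalization.

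Each per-piece factor is then bounded by Theorem \ref{mainThm} together with Bernstein's inequality on the cube $C$ (for which $\|\phi_{k-1}\|_{H^s} \lesssim N^s \|\phi_{k-1}\|_{L^2}$ with $s = \tfrac{n+d}{2} - \tfrac{n+d+2}{p}$):
\[
\|\mathbbm{1}_{[t_{k-1},t_k)}\, e^{it\Delta}\phi_{k-1}\|_{\ell^q_\gamma L^p} \;\leq\; \|e^{it\Delta}\phi_{k-1}\|_{\ell^q_\gamma L^p} \;\lesssim\; \|\phi_{k-1}\|_{H^s} \;\lesssim\; N^s \|\phi_{k-1}\|_{L^2}.
\]
Substituting and invoking $\sum_k \|\phi_{k-1}\|_{L^2}^{p_0} \leq 1$ gives $\|\mathbbm{1}_I a\|_{\ell^q_\gamma L^p} \lesssim N^s$ for a single atom, and the initial reduction finishes the proof. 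There is no serious obstacle in this argument; the only real care needed lies in the bookkeeping that selects Minkowski's inequality in the regime $p \leq q$ and the $(q/p)$-subadditivity in the regime $p > q$, with the exponents arranged so that exactly the exponent $p_0 = \min(p,q)$ in the atom normalization appears on both sides.
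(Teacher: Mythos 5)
Your overall strategy — decompose into $U^{p_0}_\Delta$ atoms with $p_0 = \min(p,q)$, exploit the disjoint time supports of the atomic pieces, and split into the regimes $p\le q$ (Minkowski in $\ell^{q/p}_\gamma$) and $p>q$ ($(q/p)$-subadditivity) so that exactly the exponent $p_0$ appears against the atom normalization — is the intended argument, and the paper itself only says that the lemma ``is a direct consequence of the atomic decomposition of $e^{-it\Delta}P_C u$.'' So the skeleton is right and is essentially the paper's proof, just written out.

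However, there is a genuine gap in the final per-piece bound. You invoke Bernstein's inequality in the form $\|\phi_{k-1}\|_{H^s}\lesssim N^s\|\phi_{k-1}\|_{L^2}$ and then feed this into Theorem~\ref{mainThm}. That Bernstein bound holds only when the cube $C$ lies within distance $O(N)$ of the origin; for a cube of side $\sim N$ centered at a frequency $\xi_0$ with $|\xi_0|\gg N$, one has $\|\phi_{k-1}\|_{H^s}\sim |\xi_0|^s\|\phi_{k-1}\|_{L^2}$, which is strictly worse than $N^s\|\phi_{k-1}\|_{L^2}$ for $s>0$. The lemma makes no assumption on the location of $C$, and this generality is actually used later (in the proof of Lemma~\ref{lem:nonlinear1}, where one decomposes the support of $P_{\leq N_0}$, $P_{\leq N_1}$ into subcubes $C_j$ of side $N_2$ with $N_0,N_1 \gg N_2$, so the $C_j$ sit far from the origin). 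The standard fix is to invoke Galilean invariance rather than Bernstein: writing the center of $C$ as $(\xi_0, m_0)\in\R^n\times\Z^d$, one has $|e^{it\Delta}P_C\phi(x,y)| = |e^{it\Delta}\tilde\phi(x-2t\xi_0, y-2tm_0)|$ where $\tilde\phi$ has Fourier support in a cube of side $\sim N$ centered at the origin; the spatial translation (which in the $\T^d$ factor wraps around the torus) preserves each $L^p_{x,y}$ norm at fixed $t$ and hence the $\ell^q_\gamma L^p$ norm, so one reduces to the origin-centered case, where your Bernstein step (or a direct application of the frequency-localized estimate underlying Theorem~\ref{mainThm}) is valid. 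Once that substitution is made, the proof is complete.
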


\noindent The lemma is a direct consequence of the atomic decomposition of $e^{-it\Delta}P_{C}u$. 

We begin with the case of the quintic equation on $\R \times \T$ with initial data in $H^{\frac{1}{2}}$ (recall this equation is $H^{\frac{1}{2}}$-critical). Let $F(u) = |u|^{4}u$. We will apply an iteration argument to the Duhamel operator \begin{equation} \label{eq:Duhamel} u_0 + \int_{0}^{t} e^{i(t-s)\Delta} F(u)(s) ds.\end{equation} The presentation here is similar to \cite{KV}, although we will be able to prove more in this semiperiodic setting since we have global-in-time estimates. 

A first step is the following multilinear estimate, which we will see is a corollary of Theorem \ref{mainThm} and the basic properties of the function spaces outlined above. 

\begin{lemma}\label{lem:nonlinear1} Let $I \subset \R$ be a time interval and suppose $u^{(j)} \in X^{\frac{1}{2}}(I)$ and $v \in Y^{-\frac{1}{2}}(I)$. Then \begin{equation}\label{eq:dualEst} \bigg| \int_{I} \int_{\R \times \T}  v \cdot \prod_{j=1}^{5} u^{(j)} dx dy dt \bigg| \lesssim \|v\|_{Y^{-\frac{1}{2}}} \prod_{j=1}^{5}\|u^{(j)}\|_{X^{\frac{1}{2}}(I)}. \end{equation}
\end{lemma}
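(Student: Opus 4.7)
The plan is to prove the $5$-linear estimate by a standard multilinear Strichartz argument based on Theorem~\ref{mainThm}, Lemma~\ref{lem:strichartzEmbed}, and the $U^p$/$V^p$ machinery. First I will Littlewood--Paley decompose each of the six factors: $u^{(j)} = \sum_{N_j} P_{N_j} u^{(j)}$ and $v = \sum_M P_M v$, and by the obvious symmetry assume $N_1 \geq N_2 \geq \cdots \geq N_5$. For a non-zero contribution to the space-time integral the Fourier supports must line up, which forces $M \lesssim N_1$ (and, when $M \ll N_1$, also $N_2 \sim N_1$).

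Next I apply H\"older's inequality in space-time, putting all six factors into the symmetric Strichartz-admissible norm $\ell^{6}_\gamma L^{6}_{x,y}(\R\times\T\times[\gamma,\gamma+1])$ (this is the pair $p=q=6$ on $\R\times\T$, for which Theorem~\ref{mainThm} gives loss $s = 1 - 4/6 = 1/3$). The constraint $\sum_{j=0}^{5}\tfrac{1}{p_j}=\sum_{j=0}^{5}\tfrac{1}{q_j}=1$ is exactly satisfied. Using Lemma~\ref{lem:strichartzEmbed} together with the embeddings $U^{2}_{\Delta}\hookrightarrow U^{6}_{\Delta}$ and $V^{2}_{\Delta}\hookrightarrow U^{p}_{\Delta}$ for any $p>2$, together with the definitions of $X^{1/2}$ and $Y^{-1/2}$, I will convert each dyadic piece to the form
\[
\|P_{N_j}u^{(j)}\|_{\ell^{6}L^{6}}\lesssim N_j^{-1/6}\,A^{(j)}_{N_j},\qquad \|P_M v\|_{\ell^{6}L^{6}}\lesssim M^{5/6}\,B_M,
\]
where $A^{(j)}_{N_j}$ and $B_M$ are $\ell^{2}$-summable in their dyadic indices with square-sums controlled by $\|u^{(j)}\|_{X^{1/2}}^{2}$ and $\|v\|_{Y^{-1/2}}^{2}$ respectively (here the different sign of the power of $N$ in the two estimates reflects the $\pm\tfrac12$ regularity weight in $X$ versus $Y$).

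The purely symmetric bound above does \emph{not} close the estimate: after multiplying the six dyadic bounds together and summing over $M\lesssim N_1$, one is left with a factor of $N_1^{2/3}$ which is not summable against $A^{(1)}_{N_1}$ in $\ell^{2}$. The crucial ingredient is therefore a bilinear refinement, which I will extract from Corollary~\ref{cor:mixedDeriv}: applied to the pair $(P_M v, P_{N_1}u^{(1)})$ when $M\ll N_1$ (and, symmetrically, to the pair $(P_{N_2}u^{(2)},P_{N_1}u^{(1)})$ when $N_2\ll N_1$), it supplies a gain of the form $(M/N_1)^{\theta}$ (respectively $(N_2/N_1)^{\theta}$) with $\theta>0$. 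This gain compensates the offending positive power of $N_1$, and after replacing one of the $\ell^{6}L^{6}$ bounds in the chain above by this bilinear bound, the product of dyadic factors acquires uniform exponential decay in $|\log(N_j/N_1)|$ for every $j\geq 2$, and in $|\log(M/N_1)|$ as well in the case $M\ll N_1$.

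Finally I will sum the resulting multi-dyadic series: for each fixed $N_1$ a Cauchy--Schwarz in the dyadic indices $N_2,\dots,N_5,M$ against the geometric decay produces $\prod_{j=2}^{5}\|u^{(j)}\|_{X^{1/2}}\cdot\|v\|_{Y^{-1/2}}$, and a final Cauchy--Schwarz over $N_1$ (using $\sum_{N_1}(A^{(1)}_{N_1})^{2}\lesssim\|u^{(1)}\|_{X^{1/2}}^{2}$) closes the bound. The main obstacle, and the reason the lemma is nontrivial, is precisely Step~4: without the bilinear/low-high improvement from Corollary~\ref{cor:mixedDeriv}, the scale-invariant quintic interaction on $\R\times\T$ is logarithmically borderline in $H^{1/2}$ and the naive H\"older step fails by a positive power of $N_1$. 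Everything else is bookkeeping around the function-space definitions.
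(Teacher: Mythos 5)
Your plan correctly identifies the central difficulty — that a naive symmetric H\"older with $\ell^6 L^6$ norms on all six factors leaves an unsummable power $N_1^{2/3}$ in the case when the low frequencies are bounded — but the fix you propose does not work, and it is also not the one used in the paper.

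The specific gap is your invocation of Corollary~\ref{cor:mixedDeriv}. That corollary is a \emph{linear} estimate for a single datum whose $\R^n$-frequency support (at scale $M$) is much smaller than its full frequency support (at scale $N$); it has nothing to do with the interaction of a high-frequency function with a low-frequency function, and it cannot be ``applied to the pair $(P_M v, P_{N_1} u^{(1)})$.'' In the present setting both $P_M v$ and $P_{N_1}u^{(1)}$ are isotropic Littlewood--Paley pieces, so the hypothesis of Corollary~\ref{cor:mixedDeriv} is simply not met for either factor, and no $(M/N_1)^\theta$ gain is produced. Moreover, even if one granted a gain of this form, a count of exponents shows it would be insufficient: to rescue the symmetric $\ell^6 L^6$ argument you would need $\theta > 2/3$, while Lemma~\ref{lem:localRefinement} (the local version underlying Corollary~\ref{cor:mixedDeriv}) gives at most $\theta < \tfrac{n}{2}\bigl(1 - \tfrac{p^\ast}{p}\bigr) = \tfrac{1}{2}\bigl(1-\tfrac{4}{6}\bigr) = \tfrac{1}{6}$ when $p=6$ on $\R\times\T$. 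So the symmetric exponent choice $(p,q)=(6,6)$ is itself too wasteful and should be abandoned.

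What the paper actually does is quite different in both respects. First, it uses an \emph{asymmetric} H\"older split, putting four factors in $\ell^{36/5}L^{9/2}$ (Strichartz loss $N^{1/9}$) and two in $\ell^{9/2}L^{18}$ (loss $N^{7/9}$), which yields the product weight $\tfrac{N_4^{5/18}N_5^{5/18}}{N_2^{3/18}N_3^{7/18}}$ after the $Y^{\pm 1/2}$ weights are accounted for. Second, and crucially, the almost-orthogonality gain comes not from any mixed-derivative estimate but from the Herr--Tataru--Tzvetkov subcube decomposition: in the case $N_0 \sim N_1 \gg N_2$, one decomposes the Fourier supports of $v_{N_0}$ and $u^{(1)}_{N_1}$ into cubes $C_j$, $C_k$ of side-length $N_2$, observes that only the pairs with $C_j + C_k$ meeting the low-frequency region contribute, applies Lemma~\ref{lem:strichartzEmbed} localized to the $N_2$-cube (so the Strichartz loss for the two highest-frequency factors is $N_2^{1/9}$ rather than $N_0^{1/9}$), and then sums over matching pairs $(C_j, C_k)$ by Cauchy--Schwarz using the $\ell^2$ structure built into $X^s$ and $Y^s$. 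This is what produces the effective bilinear gain. Replacing your Step~4 with this subcube argument, and your symmetric exponent choice with the paper's asymmetric one, is essential; the rest of your outline (Littlewood--Paley decomposition, the two Fourier-support cases, the $U^p/V^p$ bookkeeping) is on the right track.
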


\begin{proof} The proof of this lemma follows the same basic approach as in \cite{HTT} (see also \cite{KV}, \cite{HP}, and \cite{HHK}). Below all norms are taken with respect to the time interval $I$ (note we can have $I = \R$). Since we have embeddings $X^{\frac{1}{2}}\hookrightarrow Y^{\frac{1}{2}}$ it suffices to prove \eqref{eq:dualEst} with the $X^{\frac{1}{2}}$ norms replaced by $Y^{\frac{1}{2}}$ norms.
	
	We will use the short-hand $u_{N} = P_{N}u$. By performing Littlewood-Paley decompositions on all six functions and exploiting symmetry of the resulting expression, we reduce matters to showing that \begin{equation} \label{eq:dualEst2} \sum_{N_0 \geq 1} \sum_{N_1 \geq N_2 \geq ... \geq N_5} \bigg| \int_{I} \int_{\R \times \T}  v_{N_0} \cdot \prod_{j=1}^{5} u_{N_j}^{(j)} dx dy dt \bigg| \lesssim \|v\|_{Y^{-\frac{1}{2}}} \prod_{j=1}^{5}\|u^{(j)}\|_{Y^{\frac{1}{2}}(I)},\end{equation} where $N_j \geq 1$ are dyadic integers. Moreover, by Plancharel's theorem we know that for fixed $N_{0},...,N_{5}$ the corresponding term in \eqref{eq:dualEst2} vanishes unless the two largest frequencies are comparable. Hence we have two cases to consider,  when $N_{0} \sim N_{1} \geq N_2 \geq... \geq N_5$ (\textbf{Case I}), and when $N_{0} \lesssim N_1  \sim N_2 \geq ... \geq N_5$ (\textbf{Case II}).
	
	\textbf{Case I.} We fix $N_0, N_1$ with $N_0 \sim N_1$ and decompose the support of $P_{\leq N_0}$ and $P_{\leq N_1}$ into subcubes $\{C_{j}\}$ of side-lengths $N_2$. We say $C_{j} \sim C_{k}$ if the sumset $C_{j} + C_{k}$ overlaps the Fourier support of $P_{\leq 2N_2}$. After decomposing $v_{N_0}$ and $u^{(1)}_{N_1}$ with respect to the $C_{j}$'s, we see by Plancharel's theorem that it suffices to bound 
	\begin{equation} \label{eq:dualEst3} \sum_{C_{j} \sim C_{k}}\sum_{N_2 \geq N_3 \geq N_4 \geq N_5} \bigg| \int_{I} \int_{\R \times \T}  P_{C_{j}}v_{N_0} P_{C_k}u^{(1)} \cdot \prod_{j=2}^{5} u_{N_j}^{(j)} dx dy dt \bigg| \end{equation} by a factor that will be summable over $N_0$ and $N_1$ (in particular, if $C_{j}$ is not comparable to $C_{k}$ then the integral is 0). We turn to this task now. 
	
	We apply the mixed-norm H\"{o}lder's inequality with $L^{p}$ exponents corresponding to $$ \frac{4}{18} + \frac{4}{18} + \frac{4}{18} + \frac{4}{18} + \frac{1}{18} + \frac{1}{18} = 1$$ and $\ell^{q}$ exponents corresponding to $$ \frac{5}{36} + \frac{5}{36} + \frac{5}{36} + \frac{5}{36} + \frac{2}{9} + \frac{2}{9} = 1$$ to obtain  \begin{align*} \eqref{eq:dualEst3} \leq \sum_{C_{j} \sim C_{k}}\sum_{N_2 \geq N_3 \geq N_4 \geq N_5} &\|P_{C_j}v_{N_0}\|_{\ell^{\frac{36}{5}}L^{\frac{9}{2}}} \|P_{C_k}u^{(1)}_{N_1}\|_{\ell^{\frac{36}{5}}L^{\frac{9}{2}}} \|u^{(2)}_{N_2}\|_{\ell^{\frac{36}{5}}L^{\frac{9}{2}}} \|u^{(3)}_{N_3}\|_{\ell^{\frac{36}{5}}L^{\frac{9}{2}}} \\ &\cdot\|u^{(4)}_{N_4}\|_{\ell^{\frac{9}{2}}L^{18}}\|u^{(5)}_{N_5}\|_{\ell^{\frac{9}{2}}L^{18}}, \end{align*}
	
	\noindent where the norms are all localized in time to $I$. By applying Lemma \ref{lem:strichartzEmbed} and the embedding $Y^{0} \hookrightarrow U^{r}$ for $r > 2$ we ultimately obtain  \begin{align*} \eqref{eq:dualEst3} \lesssim \sum_{C_{j} \sim C_{k}}\sum_{N_2 \geq N_3 \geq N_4 \geq N_5} \frac{N_{4}^{\frac{5}{18}} N_{5}^{\frac{5}{18}} }{N_{2}^{\frac{3}{18}} N_{3}^{\frac{7}{18}} } &\|P_{C_j}v_{N_0}\|_{Y^{-\frac{1}{2}}} \|P_{C_k}u^{(1)}_{N_1}\|_{Y^{\frac{1}{2}}} \|u^{(2)}_{N_2}\|_{Y^{\frac{1}{2}}} \|u^{(3)}_{N_3}\|_{Y^{\frac{1}{2}}} \\ &\cdot\|u^{(4)}_{N_4}\|_{Y^{\frac{1}{2}}}\|u^{(5)}_{N_5}\|_{Y^{\frac{1}{2}}}. \end{align*} Now by using Cauchy-Schwarz (or Schur's test) to sum, we conclude that 
	\begin{align*}\eqref{eq:dualEst3} &\lesssim \sum_{C_{j} \sim C_{k}} \|P_{C_j}v_{N_0}\|_{Y^{-\frac{1}{2}}} \|P_{C_k}u^{(1)}_{N_1}\|_{Y^{\frac{1}{2}}} \|u^{(2)}\|_{Y^{\frac{1}{2}}} \|u^{(3)}\|_{Y^{\frac{1}{2}}}\|u^{(4)}\|_{Y^{\frac{1}{2}}}\|u^{(5)}\|_{Y^{\frac{1}{2}}} \\ &\lesssim \|v_{N_0}\|_{Y^{-\frac{1}{2}}} \|u^{(1)}_{N_1}\|_{Y^{\frac{1}{2}}}  \prod_{j=2}^{5}\|u^{(j)}\|_{Y^{\frac{1}{2}}}  \end{align*} and therefore \begin{align*}\eqref{eq:dualEst2} &\lesssim \sum_{N_0 \sim N_{1}} \|v_{N_0}\|_{Y^{-\frac{1}{2}}} \|u^{(1)}_{N_1}\|_{Y^{\frac{1}{2}}} \prod_{j=2}^{5}\|u^{(j)}\|_{Y^{\frac{1}{2}}} \\ &\lesssim \|v\|_{Y^{-\frac{1}{2}}} \prod_{j=1}^{5}\|u^{(j)}\|_{Y^{\frac{1}{2}}}, \end{align*}
	\noindent as desired. 
	
	\textbf{Case II.} In this case we do not decompose into subcubes, and instead directly apply the mixed-norm H\"{o}lder inequality with the same exponents as in Case I and then Lemma \ref{lem:strichartzEmbed}. This leads to a bound of the form 
	
	\begin{align*} \eqref{eq:dualEst2} \lesssim \sum_{N_1} \sum_{N_0 \lesssim N_1}\sum_{N_1 \sim N_2 \geq N_3 \geq N_4 \geq N_5} \frac{N_{0}^{\frac{11}{18}}N_{4}^{\frac{5}{18}}N_{5}^{\frac{5}{18}}}{N_{1}^{\frac{7}{18}}N_{2}^{\frac{7}{18}}N_{3}^{\frac{7}{18}}} &\|v_{N_0}\|_{Y^{-\frac{1}{2}}} \|u^{(1)}_{N_1}\|_{Y^{\frac{1}{2}}} \|u^{(2)}_{N_2}\|_{Y^{\frac{1}{2}}} \|u^{(3)}_{N_3}\|_{Y^{\frac{1}{2}}} \\ &\cdot\|u^{(4)}_{N_4}\|_{Y^{\frac{1}{2}}}\|u^{(5)}_{N_5}\|_{Y^{\frac{1}{2}}}. 
	\end{align*} Then repeatedly using Cauchy-Schwarz or Schur's test as before to sum, we get 
	
	\begin{align*} \eqref{eq:dualEst2} &\lesssim \sum_{N_1 \sim N_2} \frac{N_{1}^{\frac{4}{18}}}{N_{2}^{\frac{4}{18}}}  \|v\|_{Y^{-\frac{1}{2}}}\|u_{N_1}^{(1)}\|_{Y^{\frac{1}{2}}}\|u_{N_2}^{(2)}\|_{Y^{\frac{1}{2}}} \prod_{j=3}^{5}\|u^{(j)}\|_{Y^{\frac{1}{2}}} \\ &\lesssim \|v\|_{Y^{-\frac{1}{2}}} \prod_{j=1}^{5}\|u^{(j)}\|_{Y^{\frac{1}{2}}},
	\end{align*} completing the proof. 
	
\end{proof}

Below we will write $$\mathcal{I}(u) = \int_{0}^{t} e^{i(t-s)\Delta} F(u)(s) ds.$$ 

\begin{prop}\label{prop:iterationBound} For any time interval $I \subset \R$ we have \begin{equation} \label{eq:nonlinear1} \|\mathcal{I}(u)\|_{X^{\frac{1}{2}}(I)} \lesssim \|u\|_{X^{\frac{1}{2}}(I)}^{5} \end{equation} and\begin{equation} \label{eq:nonlinear2}
	\|\mathcal{I}(u+w) - \mathcal{I}(u) \|_{X^{\frac{1}{2}}(I)} \lesssim \|w\|_{X^{\frac{1}{2}}(I)}(\|u\|_{X^{\frac{1}{2}}(I)} + \|w\|_{X^{\frac{1}{2}}(I)})^{4}.
	\end{equation}
	
\end{prop}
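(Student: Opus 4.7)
The plan is to use duality with the space $Y^{-\frac{1}{2}}$ to reduce both estimates to applications of the multilinear estimate Lemma \ref{lem:nonlinear1}. Recall the standard fact from the atomic/variational space theory (see \cite{HHK}, Proposition 2.11, or \cite{HTT}): since $(V^2_\Delta)^\ast \hookrightarrow U^2_\Delta$ in a suitable sense, the Duhamel operator satisfies
\begin{equation*}
\|\mathcal{I}(F)\|_{X^{\frac{1}{2}}(I)} \lesssim \sup_{\|v\|_{Y^{-\frac{1}{2}}(I)} \leq 1} \left| \int_{I}\int_{\R \times \T} \bar{v}(t,x,y)\, F(t,x,y)\, dx\, dy\, dt \right|.
\end{equation*}
So the first estimate reduces to bounding the pairing of $F(u) = |u|^4 u = u^3 \bar u^2$ against an arbitrary $v \in Y^{-\frac{1}{2}}(I)$ of unit norm.

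For the first inequality, I would simply apply Lemma \ref{lem:nonlinear1} with the multilinear choice $(u^{(1)}, u^{(2)}, u^{(3)}, u^{(4)}, u^{(5)}) = (u, u, u, \bar u, \bar u)$. Since the $X^{\frac{1}{2}}$ norm is invariant under complex conjugation, this immediately yields
\begin{equation*}
\left| \int_{I}\int_{\R \times \T} \bar v\, F(u)\, dx\, dy\, dt \right| \lesssim \|v\|_{Y^{-\frac{1}{2}}(I)}\, \|u\|_{X^{\frac{1}{2}}(I)}^5,
\end{equation*}
and taking the supremum over $v$ gives \eqref{eq:nonlinear1}.

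For the difference estimate \eqref{eq:nonlinear2}, I would use the fundamental theorem of calculus to write
\begin{equation*}
F(u + w) - F(u) = \int_{0}^{1} \frac{d}{ds} F(u + sw)\, ds,
\end{equation*}
which expands as a finite sum of quintic monomials in $(u, \bar u, w, \bar w)$, each containing at least one factor of $w$ or $\bar w$. After this expansion, the same duality reduction and an application of Lemma \ref{lem:nonlinear1} to each monomial (with factors of $u$ placed in the $X^{\frac{1}{2}}$ slots and the distinguished $w$-factor also in an $X^{\frac{1}{2}}$ slot) gives the bound $\|w\|_{X^{\frac{1}{2}}(I)}(\|u\|_{X^{\frac{1}{2}}(I)} + \|w\|_{X^{\frac{1}{2}}(I)})^{4}$ term by term, after which one sums over the finitely many terms.

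There is essentially no genuine obstacle here once Lemma \ref{lem:nonlinear1} is in hand; the only point requiring care is verifying the duality statement relating $\|\mathcal{I}(F)\|_{X^{\frac{1}{2}}}$ to the $Y^{-\frac{1}{2}}$-pairing, which one should justify by first working on the $X^{\frac{1}{2}}_0$ component and then noting that $\mathcal{I}(u)$ has zero limit as $t \to -\infty$ (if one centers the Duhamel integral appropriately) or otherwise handling the free-evolution piece. The rest is mechanical bookkeeping.
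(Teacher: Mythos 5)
Your proposal is correct and follows essentially the same route as the paper: both reduce each estimate to Lemma \ref{lem:nonlinear1} via the $Y^{-\frac{1}{2}}$-duality characterization of $\|\mathcal{I}(\cdot)\|_{X^{\frac{1}{2}}}$, and both handle the Lipschitz estimate by expanding $|u+w|^4(u+w)-|u|^4u$ into quintic monomials each containing at least one factor of $w$ or $\bar{w}$ (your fundamental-theorem-of-calculus formulation is just a convenient way to organize that algebraic expansion). The only cosmetic difference is that the paper derives \eqref{eq:nonlinear1} as the special case $w=0$ of \eqref{eq:nonlinear2}, whereas you prove it directly, which is equally fine.
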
 

\begin{proof}
	
	By duality $$\| \mathcal{I}(u)\|_{X^{\frac{1}{2}}(I)} \leq \sup_{\substack{ v \in Y^{-1/2}(I) \\ \|v\|_{Y^{-1/2}} = 1  } }  \bigg| \int_{I} \int_{\R \times \T}  v \cdot F(u) dx dy dt \bigg|$$ for any $u \in X^{\frac{1}{2}}$ (the proof is similar to the proof of Proposition 2.11 in \cite{HTT}, see also \cite{HP}). We prove the second part of the proposition, since the first is then a simple consequence.
	
	It suffices to prove that $$\bigg| \int_{I} \int_{\R \times \T}  v \cdot (|u+w|^{4}(u+w) - |u|^{4}u) dx dy dt \bigg| \lesssim \|v\|_{Y^{-\frac{1}{2}}} \|w\|_{X^{\frac{1}{2}}}(\|u\|_{X^{\frac{1}{2}}} + \|w\|_{X^{\frac{1}{2}}})^{4} $$ for a fixed $v$ with $\|v\|_{Y^{-\frac{1}{2}}} = 1$. By expanding the expression inside the integral we see that this estimate is an immediate consequence of Lemma \ref{lem:nonlinear1} with $u^{(j)} \in \{ \pm u, \pm w, \pm \bar{u}, \pm \bar{w}\}.$ This completes the proof.  
	
\end{proof}

We can now prove Theorem \ref{nonlinearThm} using a standard iteration argument. As above we fix a time interval $I \subset \R$ containing 0 (in particular we could have $I = \R$). 

\subsubsection{The Global Case for Small Data} Suppose one has small initial data $u_0 \in H^{\frac{1}{2}}(\R \times \T)$ with $$\|u_0\|_{H^{\frac{1}{2}}} \leq \eta < \eta_{0},$$ with $\eta_0$ some fixed parameter to be determined. We apply a contraction mapping argument to the operator $$\Phi(u)(t) := e^{it\Delta}u_0 \pm i \mathcal{I}(u)(t)$$ on the ball $$B := \{ u \in X^{\frac{1}{2}}(I) \cap C_{t}H_{x,y}^{\frac{1}{2}}(I, \R \times \T): \|u\|_{X^{\frac{1}{2}}(I)} \leq 4\eta \}$$ with respect to the metric $d(u,v) = \|u - v \|_{X^{\frac{1}{2}}(I)}.$ By Proposition \ref{prop:iterationBound} we have \begin{align*} \|\Phi(u)\|_{X^{\frac{1}{2}}(I)} &\leq \|e^{it\Delta}u_0\|_{X^{\frac{1}{2}}(I)} + \| \mathcal{I}(u) \|_{X^{\frac{1}{2}}(I)} \\ &\leq 3\|u_0\|_{H^{\frac{1}{2}}} + C\|u\|^{5}_{X^{\frac{1}{2}}(I)} \\ &\leq 3\eta + C(4\eta)^{5} \leq 4\eta \end{align*} provided $\eta_0$ is chosen small enough. Therefore $\Phi$ maps $B$ into itself. Now the second part of Proposition \ref{prop:iterationBound} implies that 
\begin{align*} d(\Phi(u), \Phi(v)) &\lesssim \|u - v\|_{X^{\frac{1}{2}}(I)}(\|u\|_{X^{\frac{1}{2}}(I)} + \|v\|_{X^{\frac{1}{2}}(I)})^{4} \\
&\lesssim d(u,v)(8\eta)^{4} \\ &\leq \frac{1}{2}d(u,v), \end{align*} again provided $\eta_0$ is chosen sufficiently small. Now apply the contraction mapping theorem on $B$ to obtain a solution. By taking $I = \R$ we see in particular that for small initial data in $H^{\frac{1}{2}}(\R \times \T)$ one has global-in-time solutions to the quintic equation. We can additionally show that such solutions scatter in $H^{\frac{1}{2}}(\R \times \T)$ by appealing to the fact that if $G \in X^{\frac{1}{2}}(\R)$ then $\lim_{t\rightarrow \pm \infty}G(t)$ exists in $H^{\frac{1}{2}}$ (for the case $t\rightarrow \infty$ see Proposition 2.2 in \cite{HHK}). Indeed, to prove scattering it suffices to show that if $u$ is a solution to the quintic equation then the Duhamel integral $\mathcal{I}(u)(t)$ is conditionally convergent in $H^{\frac{1}{2}}$ as $t \rightarrow \pm \infty$.

We can also prove scattering directly from the argument above with a small amount of additional work. We record some details since these estimates are also useful in the local theory for large data. To facilitate the argument we introduce the following time-divisible norm as in \cite{HP}: $$\|u\|_{Z(I)} = \sum_{p = 9/2, 18 } \bigg( \sum_{N \geq 1} N^{p(\frac{4}{p} - \frac{1}{2} )} \|\mathbbm{1}_{I} P_{N}u\|_{\ell^{ q(p)}L^{p}}^{p}  \bigg)^{\frac{1}{p}}, $$ where $q(p) = \frac{4p}{p-2}$ as above. We also let $$\|u\|_{Z'(I)} = \|u\|^{\frac{3}{4}}_{Z(I)}\|u\|^{\frac{1}{4}}_{X^{\frac{1}{2}}(I)}.$$ Note that $\|u\|_{Z(I)} \lesssim \|u\|_{X^{\frac{1}{2}}(I)}$ by Lemma \ref{lem:strichartzEmbed}. By repeating the argument given in the proofs of Lemma \ref{lem:nonlinear1} and Proposition \ref{prop:iterationBound} we obtain the stronger results 
\begin{equation} \label{eq:duhamelBound} \|\mathcal{I}(u)\|_{X^{\frac{1}{2}}(I)} \lesssim \|u\|_{X^{\frac{1}{2}}}\|u\|^{4}_{Z'(I)}\end{equation} and $$\|\mathcal{I}(u+w) - \mathcal{I}(u) \|_{X^{\frac{1}{2}}(I)} \lesssim \|w\|_{X^{\frac{1}{2}}(I)}(\|u\|_{X^{\frac{1}{2}}(I)} + \|w\|_{X^{\frac{1}{2}}(I)}  )(\|u\|_{Z(I)} + \|w\|_{Z(I)})^{3}. $$  
\noindent Now if $u$ is a solution in $B$ with $\|u\|_{H^{\frac{1}{2}}} \leq \eta_0 $ then the norm $\|u\|_{Z'(I)}^{5}$ is uniformly bounded independent of $I$. Given $\epsilon > 0$, we can therefore find a time $T_{\epsilon}$ such that if $r,t > T_{\epsilon}$ then $$\|\mathcal{I}(\mathbbm{1}_{[t,r]}u)\|_{X^{\frac{1}{2}}(\R)} < \epsilon.$$ But this implies that $\mathcal{I}(u)(t) = \int_{0}^{t} e^{-is\Delta}F(u)(s) ds$ is Cauchy in $H^{\frac{1}{2}}$, and hence converges in norm as $t \rightarrow \infty$.

\subsubsection{The Local Case for Large Data} Suppose $\|u_0\|_{H^{\frac{1}{2}}} \leq A$ and fix $\delta  \geq 0$ to be determined. We apply a contraction mapping argument to the operator $\Phi$ on $$B':= \{ u \in X^{\frac{1}{2}}(I) \cap C_{t}H_{x,y}^{\frac{1}{2}}(I, \R \times \T): \|u\|_{X^{\frac{1}{2}}(I)} \leq 2A \text{ and } \|u\|_{Z(I)} \leq 2\delta\}.$$ Now if $\|e^{it\Delta}u_0\|_{Z(I)} \leq \delta$ then by \eqref{eq:duhamelBound} we see that $\Phi: B' \rightarrow B'$ if $A\delta$ is small enough, and likewise $$\|\Phi(u) - \Phi(v)\|_{X^{\frac{1}{2}}(I)} \leq \frac{1}{2} \|u-v\|_{X^{\frac{1}{2}}(I)}$$ if $A\delta$ is small enough. But for any $u_0 \in H^{\frac{1}{2}}$ we can find a time interval $I$ containing 0 such that $\|e^{it\Delta}u_0\|_{Z(I)} \leq \delta$, and this completes the argument.

\subsection{The Cubic Equation on $\R^{2} \times \T$ } The skeleton of the argument in this setting is essentially the same as what we saw in the last section, so we only provide a brief sketch of the details. The key point here is that the $\ell^{q(p)}$ exponent in dimension $(n,d) = (2,1)$ is $q(p) = \frac{4p}{2(p-2)} = \frac{2p}{p-2}.$ It follows that if $\sum_{i=0}^{3} \frac{1}{p_{i}} = 1$ then $\sum_{i=0}^{3} \frac{1}{q(p_{i})} = 1$, and therefore one has the quadrilinear estimate \begin{equation}\label{eq:cubicHolder} \bigg| \int_{I} \int_{\R^{2} \times \T} v \cdot \prod_{i=1}^{3}u^{(i)} dx dy dt\bigg| \lesssim \|v\|_{\ell_{I}^{q(p_0)} L^{p_0} }\prod_{i=1}^{3}\|u^{(i)}\|_{\ell_{I}^{q(p_i)} L^{p_i} }.
\end{equation} Note that \eqref{eq:cubicHolder} will not hold in general for the admissible exponents $r, q(r)$ on $\R \times \T^{2}$, since with these dimensions $q(r) = \frac{4r}{r-2}$. 

The main nonlinear estimate is the following.
\begin{lemma}\label{lem:nonlinearCubic} Let $I \subset \R$ be a time interval and suppose $u^{(j)} \in X^{\frac{1}{2}}(I)$ and $v \in Y^{-\frac{1}{2}}(I)$. Then \begin{equation}\label{eq:cubicDualEst} \bigg| \int_{I} \int_{\R^{2} \times \T}  v \cdot \prod_{j=1}^{3} u^{(j)} dx dy dt \bigg| \lesssim \|v\|_{Y^{-\frac{1}{2}}} \prod_{j=1}^{3}\|u^{(j)}\|_{X^{\frac{1}{2}}(I)}. \end{equation}
	
\end{lemma}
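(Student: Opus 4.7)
The proof will closely follow the scheme of Lemma \ref{lem:nonlinear1}, using as the main new ingredient the algebraic identity noted just before the statement: in two Euclidean dimensions one has $q(p) = \frac{2p}{p-2}$, so any partition $\sum_{i=0}^{3} \tfrac{1}{p_i} = 1$ automatically satisfies $\sum_{i=0}^{3} \tfrac{1}{q(p_i)} = 1$, making the quadrilinear H\"older estimate \eqref{eq:cubicHolder} available in mixed $\ell^{q(p_i)} L^{p_i}$ norms for any such choice of exponents. This is the analogue of the choice made in Lemma \ref{lem:nonlinear1}, and it drives the whole argument.

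First I would pass from $X^{1/2}$ to $Y^{1/2}$ using the embedding of function spaces, then perform Littlewood--Paley decompositions $v = \sum_{N_0} v_{N_0}$ and $u^{(j)} = \sum_{N_j} u^{(j)}_{N_j}$. Plancherel forces the two largest frequencies to be comparable, so it suffices to control two configurations: Case I with $N_0 \sim N_1 \geq N_2 \geq N_3$ and Case II with $N_0 \lesssim N_1 \sim N_2 \geq N_3$. These are the same two cases as in the quintic argument.

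In Case I, I would decompose $v_{N_0}$ and $u^{(1)}_{N_1}$ into Fourier cubes $\{C_j\}$ and $\{C_k\}$ of side length $\sim N_2$ and restrict to the pairs $C_j \sim C_k$ forced by Plancherel. Then apply \eqref{eq:cubicHolder} with the exponents
\[ (p_0, p_1, p_2, p_3) = \bigl(\tfrac{18}{5}, \tfrac{18}{5}, \tfrac{18}{5}, 6\bigr), \qquad (q(p_0), q(p_1), q(p_2), q(p_3)) = \bigl(\tfrac{9}{2}, \tfrac{9}{2}, \tfrac{9}{2}, 3\bigr), \]
all of which lie strictly above $p^{\ast} = 10/3$, followed by Lemma \ref{lem:strichartzEmbed} at spatial frequency scale $N_2$ for the first three factors and at scale $N_3$ for $u^{(3)}_{N_3}$. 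Converting the resulting $Y^0$ norms to $Y^{-1/2}$ and $Y^{1/2}$ costs $N_0^{1/2}$ for $v$ and saves $N_i^{-1/2}$ for each $u^{(i)}$; after tallying and using $N_0 \sim N_1$, the net frequency weight is $(N_3/N_2)^{1/6}$, a summable gain since $N_3 \leq N_2$. One then sums in $C_j, C_k$ and the dyadic scales $N_i$ by Cauchy--Schwarz and Schur's test, exactly as in the proof of Lemma \ref{lem:nonlinear1}.

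In Case II, I would skip the subcube decomposition and apply \eqref{eq:cubicHolder} and Lemma \ref{lem:strichartzEmbed} directly with the same choice of exponents. A routine bookkeeping of the powers yields two geometrically decreasing ratios, namely $(N_0/N_1)^{11/18}$ and $(N_3/N_2)^{1/6}$, both strictly positive, which again allow summation by Cauchy--Schwarz and Schur's test. The main obstacle is the tightness of the exponent window: one needs each $p_i > p^{\ast} = 10/3$ together with at least one $p_i > 5$ in order to produce a positive Case I gain in the ratio $N_3/N_2$. The choice $(\tfrac{18}{5}, \tfrac{18}{5}, \tfrac{18}{5}, 6)$ just fits inside this narrow admissible region; any sufficiently small perturbation would also work.
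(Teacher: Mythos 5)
Your proposal is correct and follows the same scheme as the paper's proof: pass to $Y^{1/2}$ norms, perform Littlewood--Paley decompositions, reduce to Cases I and II by Plancherel, decompose into $N_2$-subcubes in Case I, apply the quadrilinear H\"older estimate \eqref{eq:cubicHolder} and Lemma \ref{lem:strichartzEmbed}, and sum by Schur's test. The only difference is numerical: the paper uses exponents $(p_0,p_1,p_2,p_3)=(\tfrac{7}{2},\tfrac{7}{2},\tfrac{7}{2},7)$, yielding a Case I gain of $(N_3/N_2)^{2/7}$, while you choose $(\tfrac{18}{5},\tfrac{18}{5},\tfrac{18}{5},6)$, giving the smaller but still positive gain $(N_3/N_2)^{1/6}$; both lie in the admissible window you correctly identify (indeed, with the symmetric choice $p_0=p_1=p_2=p$ and the partition constraint, the Case~I exponent works out to exactly $1-\tfrac{5}{p_3}$, so the requirement is precisely $p_3>5$, equivalently $\tfrac{10}{3}<p<\tfrac{15}{4}$).
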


\begin{proof}
	We use the same Littlewood-Paley decomposition argument as the proof of Lemma \ref{lem:nonlinear1} with two cases corresponding to $N_0 \sim N_1 \geq N_2 \geq N_3$ (Case I), and $N_0 \lesssim N_1 \sim N_2 \geq N_3$ (Case II). In both cases we apply \eqref{eq:cubicHolder} with exponents $p_0,p_1,p_2 = \frac{7}{2}$ and $p_{3} = 7$.
	
	\textbf{Case I.} As in the proof of Lemma \ref{lem:nonlinear1} we decompose the support of $P_{N_0}$ and $P_{N_1}$ into subcubes $C_{j}$ of side length $N_2$. It then suffices to estimate \begin{equation}\label{eq:cubic1} \sum_{ N_0 \sim N_1 \geq N_2 \geq N_3} \sum_{C_j \sim C_k} \bigg| \int_{I} \int_{\R^{2} \times \T} P_{C_k} v_{N_0}P_{C_j}u_{N_1}^{(1)} \cdot u_{N_2}^{(2)}u_{N_3}^{(3)} dx dy dt \bigg|. \end{equation} By \eqref{eq:cubicHolder} and then Lemma \ref{lem:strichartzEmbed} we have 
	\begin{align*} \eqref{eq:cubic1} &\leq \sum_{ N_0 \sim N_1 \geq N_2 \geq N_3} \sum_{C_j \sim C_k} \|P_{C_k}v_{N_0}\|_{\ell^{\frac{14}{3}} L^{\frac{7}{2}} }\|P_{C_j}u_{N_1}^{(1)}\|_{\ell^{\frac{14}{3}} L^{\frac{7}{2}} }\|u_{N_2}^{(2)}\|_{\ell^{\frac{14}{3}} L^{\frac{7}{2}} }\|u_{N_3}^{(3)}\|_{\ell^{\frac{14}{5}} L^{7} } \\
	&\lesssim \sum_{ N_0 \sim N_1 \geq N_2 \geq N_3} \sum_{C_j \sim C_k} \big(\frac{N_3}{N_2}\big)^{\frac{2}{7}} \|P_{C_k}v_{N_0}\|_{Y^{-\frac{1}{2}}}\|P_{C_j}u_{N_1}^{(1)}\|_{Y^{\frac{1}{2}} }\|u_{N_2}^{(2)}\|_{Y^{\frac{1}{2}}}\|u_{N_3}^{(3)}\|_{Y^{\frac{1}{2}} }
	\end{align*} which is bounded above by the desired quantity by Cauchy-Schwarz or Schur's test. Here all spacetimes norms are taken relative to $I \times \R^{2} \times \T$.  
	
	\textbf{Case II.} In this case we estimate \begin{equation}\label{eq:cubic2} \sum_{ N_0 \lesssim N_1 \sim N_2 \geq N_3}  \bigg| \int_{I} \int_{\R^{2} \times \T} v_{N_0} u_{N_1}^{(1)} u_{N_2}^{(2)}u_{N_3}^{(3)} dx dy dt \bigg| \end{equation} by applying \eqref{eq:cubicHolder} and Lemma \ref{lem:strichartzEmbed} with the same exponents as in Case I. We get
	$$ \eqref{eq:cubic2} \lesssim  \sum_{  N_0 \lesssim N_1 \sim N_2 \geq N_3}  \frac{N_{0}^{\frac{4}{7}} N_{3}^{\frac{2}{7}} }{N_{1}^{\frac{3}{7}} N_{2}^{\frac{3}{7}} } \|v_{N_0}\|_{Y^{-\frac{1}{2}}}\|u_{N_1}^{(1)}\|_{Y^{\frac{1}{2}} }\|u_{N_2}^{(2)}\|_{Y^{\frac{1}{2}}}\|u_{N_3}^{(3)}\|_{Y^{\frac{1}{2}} } $$ which is bounded by the desired quantity (again by Cauchy-Schwarz or Schur's test). 
	
\end{proof}

The rest of the cubic case of Theorem \ref{nonlinearThm} can now be proved by routine modifications of arguments from the last section, using Lemma \ref{lem:nonlinearCubic} in place of Lemma \ref{lem:nonlinear1}.

\section{Additional Remarks}

\begin{remark} Theorem \ref{mainThm} remains true with a loss of $N^{\epsilon}$ if the operator $e^{it\Delta}$ is replaced by $$e^{it\phi(D)}f := \sum_{m} \int \widehat{f_{m}}(\xi) e^{2\pi i (x\cdot \xi + y \cdot m + t\phi(\xi, m))} d\xi,$$ where $\phi(\xi, \eta)$ is a $C^{3}$ function on $\R^{n+d}$ such that $D^{2}\phi$ is uniformly positive-definite on $\R^{n+d}$ and $$|\partial_{I}\phi(\xi,\eta)| \lesssim \frac{1}{1 + (|\xi|^{2} + |\eta|^{2} )^{\frac{1}{2}}}$$ for any triple index $I$. Let $\rho = (\xi,\eta)$ and define the rescaled function $\phi_1(\rho) = N^{-2}\phi(N\xi, N\eta).$ By Taylor expansion of the phase we see that if $\tau \subset B_{1}$ has radius $\sim N^{-\frac{2}{3}}$ then $$\int_{\tau} \widehat{g}(\rho) e^{2\pi i (z\cdot \rho + t\phi_{1}(\rho) )} d\rho$$ has space-time Fourier support in an $O(N^{-2})$-neighborhood of the paraboloid in $\R^{n+d+1}$. One then obtains a decoupling result for $e^{it\phi(D)}$ by following the iteration scheme outlined in Section 7 of \cite{BD}, and then completes the argument by following the same steps from Section 3. 

It is not clear if one can extend the $\epsilon$-removal argument from Section 4 to operators with these types of phases. The main issue is with the local Strichartz estimate in this context. The argument of Killip and Visan relies on some subtle number-theoretic properties of the kernel associated to $e^{it \Delta}$, and it is not immediately clear that the same argument will work even if $e^{it\phi(D)}$ is a small perturbation of $e^{it\Delta}$.  
\end{remark}

\vspace{2mm}

\begin{remark}\label{rmk:qsharp} The estimate in Theorem \ref{mainThm} is not true for any $q < \frac{4p}{n(p-2)}$. This is essentially a consequence of the sharpness of the $L_{t}^{q}L_{x}^{p}$ Strichartz estimates on $\R^{n}$. Indeed, we will suppose our initial data is a function independent of the periodic variables. An application of Minkowski's inequality shows that if $q \geq p$ then for any function $F(x,t)$ one has 
$$\|F\|_{L_{t}^{q} L_{x}^{p}(\R \times \R^{n}) } \leq \bigg(\sum_{\gamma \in \Z} \bigg( \int_{\R^{n}} \bigg( \int_{\gamma}^{\gamma + 1} |F(x,t)|^{q} dt \bigg)^{\frac{p}{q}} dx \bigg)^{\frac{q}{p}}  \bigg)^{\frac{1}{q}}.$$

\noindent Fix $\lambda \geq 1$ and set $F(x,t) = e^{it\Delta_{\R^n}}\phi(x)$ for a Schwartz function $\phi$ with $\widehat{\phi}$ supported inside the unit ball. Also let $f_{\lambda}$ be the function such that $\widehat{f_{\lambda}}(\xi) = \widehat{\phi}(\lambda \xi)$.  Finally let $\psi$ be a Schwartz function on $\R$ with $\psi \geq 1$ on $[0,1]$ and $\widehat{\psi}$ supported in $[-\frac{1}{2}, \frac{1}{2}]$, and set $\psi_{\gamma} (t) = \psi(t +\gamma)$. By rescaling we obtain  \begin{align*}\|e^{it \Delta_{\R^n}}\phi\|_{L_{t}^{q} L_{x}^{p} } &= \lambda^{n - \frac{n}{p} - \frac{2}{q}} \|e^{it \Delta_{\R^n}} f_{\lambda}\|_{L_{t}^{q} L_{x}^{p}} \\ &\leq  \lambda^{n - \frac{n}{p} - \frac{2}{q}}\bigg(\sum_{\gamma \in \Z} \bigg( \int_{\R^{n}} \bigg( \int_{\gamma}^{\gamma + 1} |e^{it\Delta_{\R^n} } f_{\lambda}(x)|^{q} dt \bigg)^{\frac{p}{q}} dx \bigg)^{\frac{q}{p}}  \bigg)^{\frac{1}{q}} \\ &\leq \lambda^{n - \frac{n}{p} - \frac{2}{q}}\bigg(\sum_{\gamma \in \Z} \bigg( \int_{\R^{n}} \bigg( \int_{\R} |e^{it\Delta_{\R^n} } f_{\lambda}(x) \psi_{\gamma}(t)|^q dt \bigg)^{\frac{p}{q}} dx \bigg)^{\frac{q}{p}}  \bigg)^{\frac{1}{q}}.  \end{align*} 
\noindent Now the Fourier transform in $t$ of $e^{it\Delta_{\R^n} } f_{\lambda}(x) \cdot \psi_{\gamma}(t)$ is supported in a ball of size $O(1)$ for almost every $x$, so by applying Bernstein's inequality to the $L_{t}^{q}$ integral we obtain $$\|e^{it \Delta_{\R^n}}\phi\|_{L_{t}^{q} L_{x}^{p} } \lesssim \lambda^{n - \frac{n}{p} - \frac{2}{q}}\bigg(\sum_{\gamma \in \Z} \bigg( \int_{\R^{n}} \int_{\R} |e^{it\Delta_{\R^n} } f_{\lambda}(x) \psi_{\gamma}(t)|^p dt dx \bigg)^{\frac{q}{p}}  \bigg)^{\frac{1}{q}}.$$ Hence if Theorem \ref{mainThm} is true for a choice of $q,p$ then we would need $$\|e^{it \Delta_{\R^n}}\phi \|_{L_{t}^{q} L_{x}^{p} } \lesssim \lambda^{n - \frac{n}{p} - \frac{2}{q}} \|f_{\lambda}\|_{L^{2}(\R^n )} = c\lambda^{\frac{n}{2} - \frac{n}{p} - \frac{2}{q}}\|\phi \|_{L^{2} (\R^n)}.$$ Since $\lambda \geq 1$ it follows that we need $\frac{n}{p} + \frac{2}{q} \leq \frac{n}{2},$ which is exactly the condition $q \geq \frac{4p}{n(p-2)}.$ 

\end{remark}

\vspace{2mm}

\begin{remark} As briefly discussed in the introduction,  we do not know in general if the loss of $N^{\epsilon}$ is necessary in the endpoint case $p = \frac{2(n+d+2)}{n+d}$ and $q = \frac{2(n+d+2)}{n}$ of Theorem \ref{mainThm} for arbitrary $n,d$. Note, however, that in the case $d= 0$ the theorem is true without any loss (this is just the Stein-Tomas theorem), while in the case $n = 0$ some loss in $N$ is necessary (as shown by Bourgain \cite{B}). 
	
In the low-dimensional case $n = d = 1$ we do know that the local Strichartz estimate holds with no loss in $N$ for $p = 4$, as shown by Takaoka and Tzvetkov \cite{TT}. However, as far as we know there are no higher-dimensional analogues of the Takaoka-Tzvetkov result beyond what is present in this paper. As a first step to understanding the full endpoint case of Theorem \ref{mainThm} one would hope to be able to prove a local $L^{2}$ to $L^{\frac{2(n+d+2)}{n+d}}$ Strichartz estimate or to understand why such an estimate fails for larger $(n,d)$. Note that the Takaoka-Tzvetkov argument relies heavily on the fact that $p = 4$ is an even exponent, along with a geometric estimate that is special to $\R \times \Z$. So new ideas are likely needed.

It is also of interest to better understand the possible ways to partition the time interval in global norms of the type \eqref{eq:mainThmEst}, and to understand how a local estimate of the type $$\|e^{it \Delta_{\R^n \times \T^d} } f \|_{L^{p}(\R^n \times \T^d \times I) }\leq C_{I}\|f\|_{H^s (\R^n \times \T^d)}$$ depends on the length of $I$. As far as we know this is only fully understood in the case $d = 0$ and in the special case where $n = 0$ and $\T^d$ is not an irrational torus; see \cite{DGG} for some recent related work.  

 \end{remark}

\vspace{2mm}

\Addresses
\end{document}